\documentclass[preprint,sort&compress,12pt,draft]{elsarticle}
\usepackage{amsmath}
\usepackage{mathrsfs}
\usepackage{stmaryrd}
\usepackage{bm}
\usepackage{amsfonts}
\usepackage{amssymb}
\usepackage{amsthm}
\usepackage[RGB]{xcolor}
\newcommand{\be}{\begin{equation}}
\newcommand{\bea}{\begin{equation}\begin{aligned}}
\newcommand{\beas}{\begin{equation*}\begin{aligned}}
\newcommand{\eeas}{\end{aligned}\end{equation*}}
\newcommand{\eea}{\end{aligned}\end{equation}}
\newcommand{\ee}{\end{equation}}

\makeatletter
\def\ps@pprintTitle{%
	\let\@evenfoot\@oddfoot
}
\makeatother
\begin{document}
\begin{frontmatter}
%\title{
\title{Global well-posedness for 3D compressible and incompressible micropolar fluids without angular viscosity in strip domains}

%%%%%
\author[sJ]{Youyi Zhao}
\ead{zhaoyouyi957@163.com}
%\author[sJ]{Weiwei Wang\corref{cor1}}
%\cortext[cor1]{Corresponding. }
%\ead{wei.wei.84@163.com}
\address[sJ]{School of Mathematics and Statistics, Fuzhou University, Fuzhou, 350108, China.}
\begin{abstract}
This paper investigates an initial-boundary value problem for three-dimensional (3D) micropolar fluids in a strip domain, including both the compressible and the (homogeneous and inhomogeneous) incompressible cases in the absence of angular viscosity.
The analysis is rendered difficult by two major obstacles: the degeneracy induced by vanishing angular viscosity, and the strong coupling between micro-rotation and velocity fields characterized by a non-dissipative anti-symmetric structure.
Moreover, the presence of physical boundaries in the strip domain further compounds these obstacles.
While the global well-posedness of the 2D incompressible Cauchy problem has been established in the literature,
no results are available for the 3D system and the initial-boundary value problem in both two and three dimensions, particularly in the compressible case.
By exploiting the intrinsic structure of the system and establishing delicate energy estimates, we overcome these difficulties and prove the global well-posedness of strong solutions near equilibrium in a strip domain.
%
%This paper investigates an initial-boundary value problem for three-dimensional (3D) micropolar fluids in a strip domain, including both the compressible and the (homogeneous and inhomogeneous) incompressible cases in the absence of angular viscosity.
%The analysis is rendered difficult by two major obstacles: the degeneracy induced by vanishing angular viscosity, and the strong coupling between micro-rotation and velocity fields characterized by a non-dissipative anti-symmetric structure.
%Moreover, the presence of physical boundaries in the strip domain further compounds these obstacles.
%While the global well-posedness of the 2D incompressible Cauchy problem has been established in the literature,
%no results are available for the initial-boundary value problem in both two and three dimensions, even near equilibrium, particularly in the compressible case.
%By exploiting the intrinsic structure of the system and establishing delicate energy estimates, we overcome these difficulties and prove the global well-posedness of strong solutions near equilibrium in a strip domain.
\end{abstract}

\begin{keyword}
%\MSC[2000] 35Q35\sep  76D03.
%(2000 is the default)
Micropolar fluids; Global well-posedness; Zero angular viscosity; Compressible; Incompressible; Strip domains.
\end{keyword}
\end{frontmatter}

%% Start line numbering here if you want
% \linenumbers

%% main text
\newtheorem{thm}{Theorem}[section]
\newtheorem{lem}{Lemma}[section]
\newtheorem{pro}{Proposition}[section]
\newtheorem{cor}{Corollary}[section]
\newproof{pf}{\emph{Proof}}
\newdefinition{rem}{Remark}[section]
\newtheorem{definition}{Definition}[section]

\section{Introduction}\label{sec:01}
\numberwithin{equation}{section}

%%%%%%%%%%%%%%%%%%%%%%%%%%%%%%%%%%%%%%%%%%%%%%%%%%%%%%%%%%%%%%%%%%%%%%%%%%%%%
\subsection{Background and motivation}\label{0608241825n}

The micropolar fluid model, first proposed by Eringen \cite{Eringen1966} in the 1960s,
describes a class of fluids exhibiting microscopic effects arising from the local structure and micromotions of the fluid particles.
Physically, micropolar fluids may represent media
consisting of the motion of rigid, randomly oriented or spherical particles that have their own spins or microrotations suspended in a viscous medium.
Typical examples include liquid crystals, ferrofluids, colloidal suspensions, and biological fluids such as blood.
Mathematically, the micropolar fluid model introduces a micro-rotation field alongside the standard velocity and pressure, and is accordingly governed by an additional conservation law for angular momentum \cite{Lukaszewicz1999}.
This results in a system with microstructure and non-symmetric stress tensors, fundamentally distinguishing it from standard viscous theories.
More background information on the micropolar fluid model can be found in \cite{Cowin1968,Erdogan1970,Eringen1969} and the references therein.

From a mathematical perspective, the model presents a compelling challenge due to its inherent nonlinearity and strong coupling between the micro-rotation and velocity fields. Over the past decades, significant progress has been made in establishing the global existence and uniqueness of solutions, as well as analyzing long-time behavior, vanishing viscosity limits, and regularity criteria for potential singularities.
Moreover, recent studies have focused on the regularity problem under various settings, including the magneto-micropolar case \cite{lin2020global,FengSCM,zhai2025stability},
the micropolar Rayleigh--B\'enard case \cite{Melkemi2025}, among others.
%the micropolar MHD system with partial dissipation \cite{Zhang2021},
%In terms of numerical schemes, methods such as the two-grid finite element method \cite{Lou2021} and the variational multiscale method \cite{Yuan2022} have been developed to enhance computational efficiency.
%Beyond theoretical and numerical analysis, this model has been applied in engineering problems, notably in modeling unsteady rotating flows to turbomachinery and geophysics \cite{Turkyilmazoglu2020}. Furthermore, approximate analytical solutions derived via differential transforms have offered new insights into 2D flow behaviors %\cite{Dorjsuren2024}.
%%Collectively, these studies reinforce the theoretical foundations and practical utility of micropolar fluid dynamics.

The theory of compressible micropolar viscous fluids provides a framework for describing viscous compressible media containing randomly oriented suspended particles.
The presence of these microparticles induces complex physical phenomena that cannot be adequately characterized by classical compressible viscous barotropic flow models.
Mathematically, the governing system for the compressible (isentropic) micropolar fluids is given as follows:
\begin{equation}\label{010100a}
\begin{cases}
\rho_t +\mathrm{div}(\rho u)=0,\\[1mm]
\rho(u_t + u \cdot \nabla u) + \nabla P(\rho) = (\mu + \chi) \Delta u+(\mu+\mu'- \chi)\nabla\mathrm{div} u + 2\chi \nabla \times w, \\[1mm]
\rho(w_t + u \cdot \nabla w)  + 4\chi w = 2\chi \nabla \times u + \mu_0 \Delta w  + (\mu_0+\mu'_0) \nabla \mathrm{div} w.
\end{cases}
\end{equation}
Here the unknowns $\rho:=\rho(x,t)$, $u := (u_1,u_2,u_3)^\top(x,t)$ and $w := (w_1,w_2, w_3)^\top(x,t)$ represent the density, velocity, and micro-rotation velocity, respectively.
The scalar pressure $P(\rho)$ is a function of the density, which is assumed to be smooth and strictly increasing.
The constants $\mu$, $\mu'$, and $\chi$ denote
the shear viscosity, bulk viscosity, and dynamic micro-rotation viscosity coefficients, respectively, which satisfy the following physical restrictions:
\begin{align*}
    \mu>0,\;\; \chi > 0, \quad 2\mu + 3\mu' - 4\chi \geqslant 0.
\end{align*}
Additionally, the constants $\mu_0$ and $\mu'_0$ represent the angular viscosities.
In the governing system \eqref{010100a},
we refer to \eqref{010100a}$_1$ as the continuity equation,
\eqref{010100a}$_2$ as the momentum equations, and \eqref{010100a}$_3$ as the micro-rotation equations.

The system \eqref{010100a} has been extensively studied regarding the existence, uniqueness, and regularity of solutions.
Mujakovi\'c successively established a series of results concerning the existence and large-time behavior of solutions in the absence of vacuum (i.e., $\rho > 0$), covering both the 1D Cauchy problem \cite{Mujakovic2005} and initial-boundary value problems \cite{Mujakovic1998Local,Mujakovic1998Global,Mujakovic2001}.
For multidimensional analyses, we refer the reader to \cite{Liu2016} and the references therein.
Furthermore, significant progress has been made on the global well-posedness of compressible micropolar fluid system with vacuum (see, e.g., \cite{Chen2015,Hou2021,feng2019global,liuzhong2025global}).
However, the absence of angular viscosity renders the analysis more intricate,
as the lack of the diffusion term $\Delta w$ causes the system to lose its parabolic character, degenerating into a hyperbolic equations.
In particular, these obstacles are further compounded when the physical boundaries are present.
It is worth noting that the stabilizing effect of the magnetic field in magneto-micropolar systems can often compensate for such degeneracy (see, e.g., \cite{FengSCM,zhai2025stability}).
Despite these related studies, to our best knowledge, no results are available for the global well-posedness of the pure compressible micropolar fluid system in the absence of angular viscosity, even for small initial data in the initial-boundary value setting.
Motivated by this, in this paper, we first consider such a system in a strip domain $\Omega:=\mathbb{R}^2\times(0,1)$, governed by:
\begin{equation}\label{0101}
\begin{cases}
\rho_t +\mathrm{div}(\rho u)=0\quad &\mbox{in } \Omega,\\[0.5mm]
\rho(u_t + u \cdot \nabla u) + \nabla P(\rho) = (\mu + \chi) \Delta u+(\mu+\mu'- \chi)\nabla\mathrm{div} u + 2\chi \nabla \times w  &\mbox{in } \Omega, \\[0.5mm]
\rho(w_t + u \cdot \nabla w)  + 4\chi w = 2\chi \nabla \times u \quad &\mbox{in } \Omega.
\end{cases}
\end{equation}
System \eqref{0101} admits an equilibrium state given by $r_c:=(\bar{\rho},0,0)$, where $\bar{\rho}$ is a positive constant.

Our analysis naturally extends to the incompressible case. If the fluid is incompressible, then the velocity is divergence-free and the pressure becomes an unknown variable, denoted by ${p}:={p}(x,t)$.
Consequently, system \eqref{0101} reduces to the following 3D inhomogeneous incompressible micropolar fluid system:
\begin{equation}\label{0102}
\begin{cases}
\rho_t +u\cdot \nabla\rho=0\quad &\mbox{in } \Omega,\\
\rho(u_t + u \cdot \nabla u) + \nabla {p} = (\mu + \chi) \Delta u + 2\chi \nabla \times w \quad &\mbox{in } \Omega, \\
\rho(w_t + u \cdot \nabla w)  + 4\chi w = 2\chi \nabla \times u \quad &\mbox{in } \Omega, \\
\mathrm{div}u=0 \quad &\mbox{in } \Omega.
\end{cases}
\end{equation}
Similarly, system \eqref{0102} admits an equilibrium state $r_{iic}:=(\bar{\rho},0,0)$ with the same positive constant density $\bar{\rho}$ for notational convenience.
%Note that the system \eqref{0102} admits an equilibrium state $r_{iic}:=(\bar{\rho},0,0,\bar{p})$, where %$\bar{\rho}$ be a positive constant, and
%$$\bar{\rho}\;\mbox{ is a positive constant},\quad\mbox{and}\quad\nabla\bar{p}=0.$$

When the density is constant and the angular viscosity is included in \eqref{0102} (i.e., the homogeneous incompressible micropolar fluid system with full viscosities), Galdi and Rionero \cite{12} established the global existence and uniqueness of weak and strong solutions to the Cauchy problem.
See also \cite{ Lukaszewicz1999} for the existence of weak solutions.
Chen and Miao \cite{chen2012global} further proved global well-posedness in critical Besov spaces in $\mathbb{R}^3$. Subsequent investigations have explored various aspects, including large-time behavior \cite{6, 28}, vanishing viscosity limits \cite{3, 5, 43}, and fractional dissipation \cite{7, 38, 41}.
However, the analysis becomes more delicate when the angular viscosity is absent.
For the 2D homogeneous incompressible system, Dong and Zhang \cite{8} proved the global existence and uniqueness of smooth solutions in $\mathbb{R}^2$ by introducing a novel auxiliary quantity.
In the context of homogeneous incompressible magneto-micropolar fluids, Lin and Xiang \cite{lin2020global} established the global well-posedness of small solutions under a uniform horizontal magnetic field in a 2D strip domain.
Despite these advances, the global well-posedness of the 3D incompressible micropolar system without angular viscosity remains an open problem.
Indeed, to our best knowledge, even global results for such initial-boundary value problems has not been established.

In this paper, we establish the global well-posedness of micropolar fluid systems without angular viscosity in the strip domain $\mathbb{R}^2\times(0,1)$, covering both compressible flows and (homogeneous and inhomogeneous) incompressible flows.
By exploiting the intrinsic structure of the system and establishing delicate energy estimates combined with refined trace estimates, we overcome the analytical difficulties induced by the absence of angular viscosity, the strong coupling between the micro-rotation and velocity fields, as well as the presence of boundaries,
thereby proving the global well-posedness of strong solutions near equilibrium.
Our results may shed new light on the global dynamics of micropolar fluids in the context of initial-boundary value problems.

%%%%%%%%%%%%%%%%%%%%%%%%%%%%%%%%%%%%%%%%%%%%%%%%%%%%%%%%%%%%%%%%%%%%%%%%%%%%%
\subsection{Perturbation}\label{0608241825nmn}
This paper aims to establish the global well-posedness of system~\eqref{0101} near the equilibrium state $r_c := (\bar{\rho}, 0, 0)$, as well as that of system~\eqref{0102} near $r_{iic} := (\bar{\rho}, 0, 0)$.
To achieve this,
we first reformulate both systems in terms of perturbations from equilibrium so as to facilitate the subsequent analysis.

For the compressible case, let the perturbation around $r_c$  be denoted by
$$q=\rho-\bar{\rho},\quad u=u-0,\quad\mbox{and}\quad w=w-0.$$
Then, system \eqref{0101} can be rewritten as
\begin{equation}\label{0101n}
\begin{cases}
q_t +u \cdot \nabla q+(\bar{\rho}+q)\mathrm{div}u=0\quad &\mbox{in } \Omega,\\[0.5mm]
(\bar{\rho}+q)(u_t + u \cdot \nabla u) +  P'(\bar{\rho})\nabla q - (\mu + \chi) \Delta u-(\mu+\mu'- \chi)\nabla\mathrm{div} u \quad\\
=2\chi \nabla \times w-\nabla\mathcal{N}_{P}
 \quad &\mbox{in } \Omega, \\[0.5mm]
(\bar{\rho}+q)(w_t + u \cdot \nabla w)  + 4\chi w = 2\chi \nabla \times u \quad &\mbox{in } \Omega,
\end{cases}
\end{equation}
where the remainder term $\mathcal{N}_{P}$ is defined via the Taylor expansion:
\begin{align*}
\mathcal{N}_{P}:=P(\bar{\rho}+q)-P(\bar{\rho})- P'(\bar{\rho}) q=\int_{\bar{\rho}}^{\bar{\rho}+q}(\bar{\rho}+q-z)P''(z)\mathrm{d}z.
\end{align*}
To complete the statement of the problem,
the perturbed system \eqref{0101n} is supplemented with the following initial-boundary value conditions:
\begin{align}
&\label{0101b}
(q,u,w)\big|_{t=0}=(q^0,u^0,w^0)\quad\mbox{in } \Omega,\\[0.5mm]
&\label{0101c}
u\big|_{\partial\Omega}=0\quad\mbox{for any}\;t>0,
\end{align}
where $\partial\Omega$ is regarded as the boundary of $\Omega$, namely $\partial\Omega:=\mathbb{R}^2\times\{0,1\}$.
Therefore,  proving the global well-posedness of system~\eqref{0101} near the equilibrium state $r_c$
reduces to establishing the global well-posedness of the problem \eqref{0101n}--\eqref{0101c}.

Turning to the incompressible case, we similarly define
the perturbation around $r_{iic}$ by
$$q=\rho-\bar{\rho},\quad u=u-0,\quad w=w-0,$$
%\quad\mbox{and}\quad p=\tilde{p}-\bar{p},$$
the system \eqref{0102} then becomes
\begin{equation}\label{0102n}
\begin{cases}
q_t +u \cdot \nabla q=0\quad &\mbox{in } \Omega,\\[0.5mm]
(\bar{\rho}+q)(u_t + u \cdot \nabla u) +  \nabla p - (\mu + \chi) \Delta u =2\chi \nabla \times w
 \quad &\mbox{in } \Omega, \\[0.5mm]
(\bar{\rho}+q)(w_t + u \cdot \nabla w)  + 4\chi w = 2\chi \nabla \times u \quad &\mbox{in } \Omega,\\[0.5mm]
\mathrm{div}u=0 \quad &\mbox{in } \Omega.
\end{cases}
\end{equation}
We impose the following initial-boundary value conditions
\begin{align}
&\label{0102b}
(q,u,w)\big|_{t=0}=(q^0,u^0,w^0)\quad\mbox{in }\Omega,\\[0.5mm]
&\label{0102c}
u\big|_{\partial\Omega}=0\quad\mbox{for any}\;t>0.
\end{align}
In particular, assuming the density is constant (denoted by $\rho=1$ for notational convenience),
the problem \eqref{0102n}--\eqref{0102c} reduces to the following 3D homogeneous incompressible micropolar system:
\begin{equation}\label{0103}
\begin{cases}
u_t + u \cdot \nabla u + \nabla p -(\mu + \chi) \Delta u = 2\chi \nabla \times w \qquad &\mbox{in } \Omega, \\[0.5mm]
w_t + u \cdot \nabla w  + 4\chi w = 2\chi \nabla \times u  \quad &\mbox{in } \Omega,\\[0.5mm]
\mathrm{div} u=0\quad &\mbox{in } \Omega,\\[0.5mm]
(u,w)\big|_{t=0}=(u^0,w^0)\quad &\mbox{in } \Omega, \\[0.5mm]
u=0\quad &\mbox{on } \partial\Omega.
\end{cases}
\end{equation}
Consequently, proving the global well-posedness of system~\eqref{0102} near the equilibrium state $r_{iic}$
is equivalent to establishing the global well-posedness of problem \eqref{0102n}--\eqref{0102c}, which in turn implies the global well-posedness of the homogeneous incompressible problem \eqref{0103}.

\vspace{3mm}
The rest of this paper is organized as follows.
In Section \ref{main results}, we present our main results, including the global well-posedness of solutions for the compressible problem \eqref{0101n}--\eqref{0101c} and the incompressible problem \eqref{0102n}--\eqref{0102c}, as stated in Theorems \ref{thm1} and \ref{thm2}, respectively.
Sections \ref{2025thm1} and \ref{2025thm2} are devoted to the detailed proofs of Theorems \ref{thm1} and \ref{thm2}.

%%%%%%%%%%%%%%%%%%%%%%%%%%%%%%%%%%%%%%%%%%%%%%%%%%%%%%%%%%%%%%%%%%%%%%%%%%%%%%%%%%%%%%%%%%%%%%%%%%%%%%%%%%%%%%%%%%%%%
\section{Main results}\label{main results}

%%%%%%%%%%%%%%%%%%%%%%%%%%%%%%%%%%%%%%%%%%%%%%%%%%%%%%%%%%%%%%%%%%%%%%%%%%%%%%%%%%%%%%%%%%%%%%%%%%%%%%%%%%%%%%%%%%%%%%
%\subsection{Notations}
Before stating our result, we list some conventions for notation which will be frequently used in this paper.

(1) Basic notations: $\mathbb{R}^+_0:=[0,\infty)$;
$\bar{\Omega}:=\mathbb{R}^2\times[0,1]$;
$\int\cdot\mathrm{d}x:=\int_{\Omega}\cdot\mathrm{d}x$ denotes the integral over domain $\Omega$;
$a\lesssim b$ means that $a\leqslant cb$ for some ``universal'' constant $c>0$, where the constant $c$ may depend on
the physical parameters such as $\mu$, $\mu'$ and $\bar{\rho}$, and may be different from line to line.
Moreover,
$f_{\mathrm{h}}:=(f_1, f_2)$
for $f=(f_{\mathrm{h}}, f_3)^\top$;
$\mathrm{div}_{\mathrm{h}}f_{\mathrm{h}}:=\partial_1f_1+\partial_2f_2$, and
$\mathrm{d}x_{\mathrm{h}}:=\mathrm{d}x_{1}\mathrm{d}x_{2}$.
%The notation $\partial_{i}$ ($1\leqslant i\leqslant3$) denotes the partial derivative with respect to the $i$-th coordinate of variable $x$, i.e., $\partial_{i}:=\partial_{x_i}$.
Letters $\lambda_{i}$ ($1\leqslant i\leqslant14$), $c_{j}$ ($1\leqslant j\leqslant8$) and $\delta_{k}$ ($1\leqslant k\leqslant3$) are fixed constants which may depend on the parameters.
Let $\mathbb{N} = \{0, 1, 2, \dots\}$ be the collection of non-negative integers. When using space-time differential multi-indices, we write $\mathbb{N}^{1+d} = \{\alpha = (\alpha_0, \alpha_1, \dots, \alpha_d)\}$ to emphasize that the $0$--index term is related to temporal derivatives. For just spatial derivatives, we write $\mathbb{N}^d$. For $\alpha \in \mathbb{N}^{1+d}$, we write $\partial^\alpha = \partial_t^{\alpha_0} \partial_1^{\alpha_1} \cdots \partial_d^{\alpha_d}$, and define the parabolic counting by $|\alpha| = 2\alpha_0 + \alpha_1 + \cdots + \alpha_d$.
Particularly, if $\alpha_0=0$ and $d=2$, we denote
$\partial_{\mathrm{h}}^{\alpha}:=\partial_1^{\alpha_1}\partial_2^{\alpha_2}$ for some multiindex of order $\alpha:=(\alpha_1, \alpha_2)$ with $|\alpha|=\alpha_1+\alpha_2$;
and $\partial_{\mathrm{h}}^{i}$ denotes $\partial_{\mathrm{h}}^{\alpha}$ for any $\alpha$ satisfying $|\alpha|=i$.
For any differential operator $\partial^{\alpha}$, we use the commutator:
$$[\partial^{\alpha},f]g = \partial^{\alpha}(fg) - f\partial^{\alpha}g.$$

(2) Simplified notations of function spaces:
\begin{equation*}
\begin{aligned}
&L^p:=L^p (\Omega)=W^{0,p}(\Omega),
%\;\;W^{i,2}:=W^{i,2}(\Omega),
\;\;\; H^i:=W^{i,2}(\Omega),\\[1mm]
&H_0^1:=\{f\in H^1~|~f|_{\partial\Omega}=0\;\mbox{in the sense of trace}\},\;\;
H_0^j:=H_0^1\cap H^j,
\end{aligned}
\end{equation*}
where $1<p\leqslant\infty$, and $i\geqslant0$, $j\geqslant1$ are integers.

(3) Simplified Sobolev norms and semi-norms:
\begin{equation*}
\begin{aligned}
&\|\cdot\|_{i}:=\|\cdot\|_{W^{i,2}(\Omega)},\;\;
\|\cdot\|_{i,k}:=\sum_{|\alpha|=i} \|\partial_{\mathrm{h}}^{\alpha}\cdot\|_{k},\;\;
\|\cdot\|_{\underline{i},k}:=\sum_{j=0}^i\|\cdot\|_{j,k},
\end{aligned}
\end{equation*}
where $i$, $j$ and $k$ are non-negative integers.
For convenience, we sometimes use the simplified norm
$\|(\aleph_{1}f_1,\ldots,\aleph_{n}f_n)\|_{X}:=
\sqrt{\sum_{1\leqslant i\leqslant n}\|\aleph_{i} f_i\|_{X}^2}$, where $f_1$, $\ldots$, $f_n$ are vector or scalar functions that may have different dimensions.
Each of $\aleph_{1},\ldots,\aleph_{n}$ is a differential operator or the identity.

%%%%%%%%%%%%%%%%%%%%%%%%%%%%%%%%%%%%%%%%%%%%%%%%%%%%%%%%%%%%%%%%%%%
%\subsection{Main results}\label{subsec:03}
\vspace{5mm}
We first state our global well-posedness result for the compressible problem \eqref{0101n}--\eqref{0101c}.
To this end, we define the energy functional $\mathcal{E}(t)$ and dissipation functional $\mathcal{D}(t)$:
\begin{align}
\mathcal{E}(t):=&
\|(q,u,w)\|_{2}^2+\|(q_{t},w_{t})\|_{1}^2+\|u_t\|_{0}^2 \nonumber,\\[1.5mm]
\mathcal{D}(t):=&
\|u\|_{3}^2+\|w\|_{2}^2+\|\nabla q\|_{1}^2+\|(q_{t},u_{t},w_{t})\|_{1}^2.\nonumber
%+{\color{red}\|(q_{t}+u\cdot \nabla q,w_{t}+u\cdot \nabla w)\|_{2}^2}.\nonumber
\end{align}
Our global well-posedness result for \eqref{0101n}--\eqref{0101c} is given as follows.
\begin{thm}\label{thm1}
There exists a sufficiently small constant $\delta_1>0$ such that, if the initial data $(q^0, u^0, w^0)\in H^{2}\times H^{2}_{0}\times H^{2}$ satisfies
$${\|( q^0 ,u^0,w^0)\|_{2}^2}\leqslant\delta_1. $$
Then the initial-boundary value  problem \eqref{0101n}--\eqref{0101c} admits a unique strong solution
$(q, u,w)\in C^{0}(\mathbb{R}^+_0, H^{2} \times H^{2}_{0} \times H^{2})$;
moreover, the solution enjoys the estimate
\begin{align}\label{202109090810}
&\mathcal{E} (t) + \int_{0}^{t} {\mathcal{D}}(\tau)\mathrm{d}\tau\lesssim \|( q^0, u^0, w^0)\|_{2}^2.
\end{align}
\end{thm}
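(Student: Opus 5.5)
We follow the standard scheme of local existence plus uniform a priori estimates plus a continuity argument. Local well-posedness of \eqref{0101n}--\eqref{0101c} in $H^2\times H^2_0\times H^2$ comes from a linearization and iteration: transport equations for $q$ and $w$ with given $u$, and a parabolic Lamé system for $u$ with Dirichlet data. This is classical because $\bar\rho+q$ stays bounded away from zero for small data. The core task is then the a priori estimate. Suppose that on $[0,T]$ we have $\sup_t\mathcal{E}(t)\leqslant\delta$ with $\delta$ small. We aim to construct an energy $\tilde{\mathcal{E}}\simeq\mathcal{E}$ satisfying
\begin{equation*}
\frac{\mathrm{d}}{\mathrm{d}t}\tilde{\mathcal{E}}+c\,\mathcal{D}\lesssim\sqrt{\mathcal{E}}\,\mathcal{D}.
\end{equation*}
Integrating this inequality and then using continuity closes the bound \eqref{202109090810} and gives global existence. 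Uniqueness follows from an $L^2$ energy estimate for the difference of two solutions.

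\textbf{Step 1: basic and horizontal/temporal estimates.} We test \eqref{0101n}$_2$ with $u$ and \eqref{0101n}$_3$ with $w$, and the linearized continuity equation with $P'(\bar\rho)q/\bar\rho$. The key algebraic fact is the identity
\begin{equation*}
\int(\nabla\times w\cdot u+\nabla\times u\cdot w)\,\mathrm{d}x=2\int \nabla\times u\cdot w\,\mathrm{d}x,
\end{equation*}
which holds with no boundary term because $u|_{\partial\Omega}=0$. Combined with $\|\nabla u\|_0^2=\|\nabla\times u\|_0^2+\|\mathrm{div}u\|_0^2$ for $u\in H^1_0$, this gives
\begin{equation*}
\chi\|\nabla u\|^2+4\chi\|w\|^2-4\chi\int\nabla\times u\cdot w\,\mathrm{d}x=\chi\|\nabla\times u-2w\|^2+\chi\|\mathrm{div} u\|^2+\ldots
\end{equation*}
The result is dissipation of $\mu\|\nabla u\|^2$ and $\chi\|\nabla\times u-2w\|^2$. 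Since $\|\nabla\times u\|$ is controlled, this yields $\|w\|_0$ dissipation. The same structure holds after applying $\partial_t$ and horizontal derivatives $\partial_{\mathrm h}^i$, $i\leqslant2$, because these operators preserve the boundary condition. This gives control of $\|u_t\|_0$, $\|(q_t,w_t)\|_0$, the horizontal derivatives of $(q,u,w)$, and dissipation of $\|\nabla u_t\|_0$, $\|w_t\|_0$.

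\textbf{Step 2: normal derivatives, the density, and the micro-rotation.} For the normal derivatives of $q$ we use the classical effective-viscous-flux trick of Matsumura–Nishida type. We take the normal component of the momentum equation, combine it with $\partial_3$ of the continuity equation, and eliminate $\partial_3\mathrm{div}u$. This gives a damped transport equation for $\partial_3 q$, and hence dissipation of $\|\nabla q\|_1$ and control of $\|q\|_2$. The terms $\nabla\times w$ on the right-hand side are harmless because $w$ already carries damped $L^2$ dissipation. Next, elliptic Stokes/Lamé regularity with Dirichlet data, $\|u\|_{k+2}\lesssim\|\text{rhs}\|_{k}$, gives $\|u\|_3$ from $\|u_t\|_1$, $\|\nabla q\|_1$ and $\|\nabla\times w\|_1$. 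Finally, for $w$ we use the damping in \eqref{0101n}$_3$: applying $\nabla^k$, $k\leqslant2$, and testing gives
\begin{equation*}
\frac{\mathrm{d}}{\mathrm{d}t}\|\nabla^kw\|^2+4\chi\|\nabla^kw\|^2\lesssim\|\nabla^{k+1}u\|^2+\text{nonlinear}.
\end{equation*}
The quantity $\|\nabla^{3}u\|$ is controlled only through the elliptic estimate, which itself involves $\|w\|_2$.

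\textbf{Main obstacle.} The hard step is exactly this circularity, namely $\|w\|_2\leftarrow\|u\|_3\leftarrow\|\nabla\times w\|_1$ with $O(1)$ constants, in the presence of the boundary. We resolve it by weighting. On the one hand, the $w$-equation gives $\|\nabla^2w\|^2$ dissipation with coefficient $4\chi$. On the other hand, $\|u\|_3$ can be taken from the Lamé estimate where $2\chi\nabla\times w$ is a forcing, with $\|\nabla\times w\|_1\lesssim\|w\|_2$. We expect to use the anti-symmetric structure a second time: test $\nabla^2$ of the $w$-equation against $\nabla^2 w$ and $\nabla^2$ of the $u$-equation, in the weak form in tangential directions, against $\nabla^2u$. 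In this way the cross terms $\nabla\times w\cdot u$ cancel, exactly as in Step 1. Normal derivatives are not allowed in the $u$-testing, so there we use the vorticity equation. Setting $\omega=\nabla\times u$, the combination $\omega-2w$ satisfies a damped equation up to $\Delta u$-type terms, which we rewrite through the $u$-equation. The trace terms from integration by parts in $x_3$ are handled through refined trace estimates together with $u_t|_{\partial\Omega}=0$. The nonlinear terms are bounded by $\sqrt{\mathcal E}\mathcal D$ using Sobolev embedding on $\Omega$ and the smallness of $q$. Summing the estimates with suitably small weights $\lambda_i$ produces $\tilde{\mathcal E}$.
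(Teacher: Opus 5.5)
There is a genuine gap, and it sits exactly at the step you flag as the main obstacle. Your overall architecture and Step 1 match the paper, including the tangential/temporal estimates via the completion $\chi\|\nabla\times\partial^{\alpha}u-2\partial^{\alpha}w\|_0^2$. You also correctly locate the real difficulty: the top-order loop $\|w\|_2\to\|u\|_3\to\|\nabla\times w\|_1$ with $O(1)$ constants. But the proposal never breaks this loop, and none of the mechanisms you sketch can.

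\emph{Weighting cannot.} The $w$-equation gives $\frac{\mathrm{d}}{\mathrm{d}t}\|\nabla^2w\|_0^2+c_1\|\nabla^2w\|_0^2\leqslant C_1\|\nabla^3u\|_0^2+\cdots$, and the Lam\'e estimate gives $\|\nabla^3u\|_0^2\leqslant C_2\|w\|_2^2+\cdots$. Closing these requires $C_1C_2<c_1$, a condition on $\mu,\chi$ that no choice of weights creates.

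\emph{Normal-derivative testing cannot.} Testing $\nabla^2$ of the momentum equation against $\nabla^2u$ fails in $x_3$, as you note. Both the viscous term and the anti-symmetric cancellation produce boundary integrals of $\partial_3^2u$, which does not vanish on $\partial\Omega$.

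\emph{The vorticity route cannot.} First, $\omega=\nabla\times u$ has no boundary condition under no-slip. Second, $z=\omega-2w$ solves $\bar\rho z_t+4\chi z=\bar\rho\omega_t+\cdots$. Bounding $z$ in $H^1$ or $H^2$, as $\|w\|_2$ requires, therefore needs $\nabla^2u_t$ or $\nabla^3u_t$, whereas $\mathcal{D}$ contains only $\|u_t\|_1$. Rewriting $\bar\rho\omega_t$ through the momentum equation instead produces $(\mu+\chi)\Delta\omega$, which loses derivatives.

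The missing idea is to take the curl of the \emph{micro-rotation} equation and to use the momentum equation only \emph{pointwise}, with no integration by parts in $x_3$. With $W=\nabla\times w$,
one has $\bar\rho(W_t+u\cdot\nabla W)+4\chi W=2\chi(\nabla\mathrm{div}u-\Delta u)+\cdots$. The horizontal components then form a coupled system:
$(\mu+\chi)\partial_3^2u_{\mathrm{h}}+2\chi W_{\mathrm{h}}=\tilde{\mathcal{H}}_{\mathrm{h}}$ and $\bar\rho(\partial_tW_{\mathrm{h}}+u\cdot\nabla W_{\mathrm{h}})+4\chi W_{\mathrm{h}}+2\chi\partial_3^2u_{\mathrm{h}}=\tilde{\mathcal{G}}_{\mathrm{h}}$. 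Here $\tilde{\mathcal{H}}_{\mathrm{h}},\tilde{\mathcal{G}}_{\mathrm{h}}$ contain only $u_t$, $\nabla_{\mathrm{h}}q$, $\nabla\partial_{\mathrm{h}}u$ and nonlinear terms.

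Apply $\partial_{\mathrm{h}}^j\partial_3^{1-i}$ and test with $a=\partial_{\mathrm{h}}^j\partial_3^{3-i}u_{\mathrm{h}}$ and $b=\partial_{\mathrm{h}}^j\partial_3^{1-i}W_{\mathrm{h}}$. This gives the coercive form $\mu\|a\|_0^2+\chi\|a+2b\|_0^2$, i.e.\ an effective damping $4\mu\chi/(\mu+\chi)$ for $W_{\mathrm{h}}$, which uses $\mu>0$. It yields $\|W\|_1$ and $\|\partial_3^{3-i}u\|_{\underline{i},0}$ simultaneously. In the third component $\partial_3^2u_3$ drops out of $(\nabla\times\nabla\times u)_3$, so $W_3$ is directly damped. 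Finally $\mathrm{div}w$ obeys a purely damped equation, and $\|w\|_2$ is recovered algebraically together with the tangential norms.

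Your Step 2 also overstates what the effective-viscous-flux ODE gives. It damps only $\partial_3q$. Already at the $\|q\|_2$ level its forcing contains $\partial_3(\nabla\times w)_3=-\mathrm{div}_{\mathrm{h}}W_{\mathrm{h}}$, so it depends on the $W$-estimate, not merely on $L^2$ damping of $w$.

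In particular, $\nabla_{\mathrm{h}}q$ has no dissipation of its own and cannot be used as an input to a Lam\'e $H^3$ bound. In the paper it enters through $\tilde{\mathcal{H}}_{\mathrm{h}}$ and is handled case by case. The worst term, $\int\partial_{\mathrm{h}}\nabla_{\mathrm{h}}q\cdot\partial_{\mathrm{h}}\partial_3^2u_{\mathrm{h}}\,\mathrm{d}x$, is integrated by parts in $x_3$. Its boundary integral is bounded, via the dual and refined trace estimates, by $\|\nabla q\|_1\|u\|_3^{1/2}\|u\|_{\underline{2},1}^{1/2}$, which closes against the heavily weighted tangential dissipation. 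Only at the end is $\|\nabla q\|_1$ recovered from the momentum equation.
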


\begin{rem}\label{rem:1}
%The global well-posedness result in Theorem \ref{thm1} also holds for the case in a horizontally periodic domain $\tilde{\Omega}$, in which we can show that the solution decays exponentially in time, provided that $\int_{\tilde{\Omega}}q^0\mathrm{d}x=0$.
The global well-posedness result in Theorem \ref{thm1} also holds for the case on the half-space $\mathbb{R}_{+}^3:=\{x=(x_1,x_2,x_3)^{\top}\in\mathbb{R}^3: x_3>0\}$ (with slight modifications).
Similarly, the result for the homogeneous incompressible case (see Corollary \ref{corthm2} below) remains valid in this setting.
\end{rem}
\begin{rem}\label{rem:12}   %\cite{JJTIWYHCMP}
Our approach in the proof of Theorem \ref{thm1} draws inspiration from \cite{FengSCM} and \cite{MANTIC481} to establish the energy-dissipation estimates for $w$ and $q$, respectively. It is worth noting that the lack of angular viscosity and the presence of microstructure coupling (via $\nabla\times w$ and $\nabla\times u$) lead to a loss of derivatives, posing substantial obstacles to the analysis.
Moreover, compared to the work in \cite{FengSCM} formulated in Lagrangian coordinates, the pressure term here introduces additional difficulties in establishing estimates for the normal derivatives of $w$. This difficulty stems from the absence of the magnetic tension, which would otherwise support the dissipation mechanism in conjunction with the pressure in Lagrangian coordinates.
\end{rem}

We now present our global well-posedness result for the incompressible problem \eqref{0102n}--\eqref{0102c}.
For this purpose, we define the energy and dissipation functionals:
\begin{align}
\mathscr{E}(t):=&\|q\|_{2}^2+\mathfrak{E}(t) \nonumber,\\[1.5mm]
\mathfrak{E}(t):=&
\|(u,w)\|_{2}^2+\|\nabla p\|_{0}^2+\|u_t\|_{0}^2+\|w_{t}\|_{1}^2 \nonumber,\\[1.5mm]
\mathfrak{D}(t):=&
\|u\|_{3}^2+\|w\|_{2}^2+\|\nabla p\|_{1}^2+\|(u_{t},w_t)\|_{1}^2.\nonumber
\end{align}
Our global well-posedness result for the problem \eqref{0102n}--\eqref{0102c} is given as follows.
\begin{thm}\label{thm2}
There exists a sufficiently small constant $\delta_2>0$ such that, if the initial data $(q^0, u^0, w^0)\in H^{2}\times H^{2}_{0}\times H^{2}$ satisfies
$\mathrm{div}u^0=0$ in $\Omega$ and
$${\|( q^0, u^0, w^0)\|_{2}^2}\leqslant\delta_2. $$
Then the initial-boundary value  problem \eqref{0102n}--\eqref{0102c} admits a unique strong solution
$(q, u,w)\in C^{0}(\mathbb{R}^+_0,$ $H^{2} \times H^{2}_{0} \times H^{2}$) with a unique associated pressure $p$ (up to a constant) satisfying $\nabla p\in C^{0}(\mathbb{R}^+_0, L^{2})$.
Moreover, the solution enjoys
\begin{enumerate}[\quad (1)]
\item[(1)] the exponential decay-in-time estimate:
\begin{align}
&\label{202609090810}
e^{ct}\mathfrak{E}(t)+\int_{0}^{t}e^{c\tau/2}\mathfrak{D}(\tau)\mathrm{d}\tau
 \lesssim \|(u^0, w^0)\|_{2}^2;
\end{align}
\item[(2)] the global estimate:
\begin{align}
&\label{202609090810nm}
 \mathscr{E}(t)\lesssim \|( q^0, u^0, w^0)\|_{2}^2.
\end{align}
\end{enumerate}
\end{thm}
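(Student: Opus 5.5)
The proof follows the standard scheme of local well-posedness plus an a priori estimate, closed by a continuity argument. Local existence of a strong solution to \eqref{0102n}--\eqref{0102c} in $H^2\times H^2_0\times H^2$ with $\nabla p\in L^2$ comes from a linearization/iteration argument. On a frozen velocity, $q$ and $w$ solve transport equations, $w$ with a damping term; $u$ solves a Stokes problem with variable density $\bar\rho+q$, which is bounded away from zero for small data. Uniqueness follows from a lower-order ($L^2$) energy estimate for the difference of two solutions. The core of the proof is then the a priori estimate: under the assumption $\sup_{0\le t\le T}\mathscr{E}(t)\le\delta$ small, we show \eqref{202609090810} and \eqref{202609090810nm} on $[0,T]$ with constants independent of $T$. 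A standard continuity argument extends the solution globally.

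\textbf{Basic energy and the dissipative structure.} Testing the $u$-equation with $u$ and the $w$-equation with $w$, and using the transport equation for $q$, the cross terms combine as
\[
2\chi\!\int(\nabla\times w\cdot u+\nabla\times u\cdot w)\,\mathrm{d}x=4\chi\!\int \nabla\times u\cdot w\,\mathrm{d}x ,
\]
where the integration by parts is justified since $u|_{\partial\Omega}=0$. This yields
\[
\tfrac12\tfrac{\mathrm{d}}{\mathrm{d}t}\int\rho(|u|^2+|w|^2)\,\mathrm{d}x+(\mu+\chi)\|\nabla u\|_0^2+4\chi\|w\|_0^2=4\chi\int\nabla\times u\cdot w\,\mathrm{d}x .
\]
Since $\|\nabla\times u\|_0=\|\nabla u\|_0$ for divergence-free $u$ vanishing on $\partial\Omega$, the right side is bounded by $\chi\|\nabla u\|_0^2+4\chi\|w\|_0^2$. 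This gives only $\mu\|\nabla u\|_0^2$ dissipation and loses the $w$-dissipation. To recover it, I would use $w$'s own damping directly: the $w$-equation gives $4\chi w=2\chi\nabla\times u-\rho(w_t+u\cdot\nabla w)$, and testing it with $w$ with a small weight yields $\|w\|_0^2\lesssim\|\nabla u\|_0^2+$ (time derivative) $+$ (cubic). By Poincar\'e in the strip (thanks to the bounded $x_3$-direction and the Dirichlet condition), $\|u\|_0\lesssim\|\nabla u\|_0$. Hence the basic energy $\|(u,w)\|_0^2$ is dominated by the dissipation, which is where the exponential decay comes from. Note that $q$ does not enter $\mathfrak{E}$, and that it decouples from the linear part in the incompressible case.

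\textbf{Higher-order estimates.} Next I would apply $\partial_t$ and the tangential derivatives $\partial_{\mathrm h}^{1},\partial_{\mathrm h}^{2}$, which preserve the boundary condition, and repeat the energy estimate. This gives control of $\|(u_t,w_t)\|_0$ in the energy, of $\|\nabla u_t\|_0$ and $\|\nabla\partial_{\mathrm h}^{i}u\|_0$ in the dissipation, and of the damped $w$-terms. The commutators involving $q$ are handled because $\|q\|_2$ stays bounded: $\partial^\alpha q$ satisfies a transport equation with $\mathrm{div}\,u=0$, so $\|q(t)\|_2\lesssim\|q^0\|_2\exp(C\int_0^t\|u\|_3)$, and $\int_0^\infty\|u\|_3\,\mathrm{d}\tau$ is finite by the exponential decay. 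This yields \eqref{202609090810nm}. Normal regularity of $u$ and the bound on $\nabla p$ come from the Stokes estimate
\[
\|u\|_{3}+\|\nabla p\|_{1}\lesssim\|\rho(u_t+u\cdot\nabla u)\|_1+\|\nabla\times w\|_1 ,
\]
and the analogous estimate one order lower gives $\|u\|_2+\|\nabla p\|_0$.

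\textbf{Main obstacle: closing the normal derivatives of $w$.} The Stokes bound requires $\|w\|_2$, but $w$ has no diffusion, and $\nabla\times u$ in the $w$-equation costs one more derivative of $u$. I would apply $\partial^\alpha$ with $|\alpha|\le2$, including normal derivatives, to the $w$-equation and test with $\partial^\alpha w$. The damping gives $4\chi\|\partial^\alpha w\|_0^2$, while the source term is bounded by $2\chi\|\nabla\times\partial^\alpha u\|_0\|\partial^\alpha w\|_0$, so we need $\|u\|_3$ on the right. Combined with the Stokes bound this creates a loop:
\[
\|u\|_3\lesssim\|w\|_2+\|u_t\|_1+\cdots,\qquad \|w\|_2\lesssim\|u\|_3+\cdots,
\]
and the loop closes only if the constants are favorable. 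I would therefore first try to exploit the structure: substitute $w=\tfrac12\nabla\times u-\tfrac{\rho}{4\chi}(w_t+u\cdot\nabla w)$ into the $u$-equation. Then $2\chi\nabla\times w=\chi\nabla\times\nabla\times u+\dots=-\chi\Delta u+\dots$ (using $\mathrm{div}\,u=0$), so the effective viscosity becomes $\mu>0$ and the Stokes estimate reads
\[
\|u\|_3+\|\nabla p\|_1\lesssim\|\rho(u_t+u\cdot\nabla u)\|_1+\|w_t\|_2+\text{cubic terms},
\]
which avoids $\|w\|_2$. This trades the issue for $w_t$, which I would handle by viewing $\partial^\alpha(w_t)$ through the damped equation again, forming a combined functional $\partial^\alpha w+\tfrac{1}{4\chi}\partial^\alpha w_t$. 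If the derivative count on $w_t$ turns out to be too high, I would instead write the $w$-estimate for the damped transport of $\nabla w$, with dissipation $\|\nabla^2 w\|_0^2\lesssim\|\nabla^3 u\|_0\|\nabla^2 w\|_0$, and choose weights so that $\epsilon\|u\|_3^2$ is absorbed by the Stokes dissipation with effective viscosity $\mu$. This absorption is where care is needed.

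Finally, summing all the pieces with suitable small weights gives
\[
\tfrac{\mathrm{d}}{\mathrm{d}t}\tilde{\mathfrak E}+c\,\mathfrak D\le C\sqrt{\mathscr E}\,\mathfrak D ,
\]
with $\tilde{\mathfrak E}$ equivalent to $\mathfrak E$ and $\mathfrak E\lesssim\mathfrak D$. Gr\"onwall's inequality then gives \eqref{202609090810}.
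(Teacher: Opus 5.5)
Your outline is essentially the paper's everywhere except at the step you yourself flag as the main obstacle. That includes the basic energy with the anti-symmetric cross terms, the tangential $\partial_t,\partial_{\mathrm{h}}$ estimates, the Stokes bounds for $(u,\nabla p)$, $\mathfrak{E}\lesssim\mathfrak{D}$, and Gr\"onwall for $\|q\|_2$. But the step you flag is never closed, and neither of your fixes closes it.

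\emph{First fix.} Substituting $w=\frac{1}{2}\nabla\times u-\frac{\rho}{4\chi}(w_t+u\cdot\nabla w)$ lowers the viscosity to $\mu$, but it puts $\frac{1}{2}\nabla\times(\rho w_t)$ into the forcing. The $H^3$ Stokes bound then needs $\|w_t\|_2$, which is not in $\mathfrak{D}$. The only available bound is $\|w_t\|_2\lesssim\|u\|_3+\|w\|_2+\cdots$, which puts you back in the loop, and a damped estimate for $\nabla^2 w_t$ would cost $\|u_t\|_3$.

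\emph{Second fix.} The damped estimate for $\nabla^2 w$ has the form $\frac{\mathrm{d}}{\mathrm{d}t}\|\nabla^2 w\|_0^2+c_1\|\nabla^2 w\|_0^2\leqslant c_2\|u\|_3^2+\cdots$, with $c_1,c_2$ fixed by $\chi$ and $\bar{\rho}$. Multiplying by a weight cannot create an $\epsilon$ in front of $\|u\|_3^2$. Nor is there a ``Stokes dissipation'' to absorb it: $\partial_3$ does not preserve $u|_{\partial\Omega}=0$, so no energy identity produces $\mu\|\nabla^3u\|_0^2$. (That is what happens in $\mathbb{R}^3$, where a full $\nabla^2$ estimate reproduces $\mu\|\nabla^3u\|_0^2+\chi\|\nabla\times\nabla^2u-2\nabla^2w\|_0^2$.) In the strip you only have $\|u\|_3\leqslant C(\|u_t\|_1+\|w\|_2+\cdots)$ with a non-explicit $C$. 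The loop then closes only if $c_2C^2<c_1$, which nothing guarantees. So your final inequality $\frac{\mathrm{d}}{\mathrm{d}t}\tilde{\mathfrak{E}}+c\mathfrak{D}\leqslant C\sqrt{\mathscr{E}}\mathfrak{D}$ is asserted rather than derived.

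The missing idea (Lemma \ref{uwnormalgh}) is to use the momentum equation not in an energy estimate but as an algebraic relation for $\partial_3^2u_{\mathrm{h}}$, paired with the curl of the $w$-equation.

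\emph{The coupled structure.} Since $\mathrm{div}\,u=0$, $W=\nabla\times w$ satisfies $\bar{\rho}(W_t+u\cdot\nabla W)+4\chi W=-2\chi\Delta u+\cdots$, whose horizontal part contains $-2\chi\partial_3^2u_{\mathrm{h}}$. Meanwhile the horizontal momentum equation reads $(\mu+\chi)\partial_3^2u_{\mathrm{h}}+2\chi W_{\mathrm{h}}=\bar{\rho}\partial_tu_{\mathrm{h}}+\nabla_{\mathrm{h}}p-(\mu+\chi)\Delta_{\mathrm{h}}u_{\mathrm{h}}+\cdots$, cf. \eqref{2026195637es}. Apply $\partial_{\mathrm{h}}^j\partial_3^{1-i}$ and test with $a=\partial_{\mathrm{h}}^j\partial_3^{3-i}u_{\mathrm{h}}$ and $b=\partial_{\mathrm{h}}^j\partial_3^{1-i}W_{\mathrm{h}}$. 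The left side becomes $\frac{\bar{\rho}}{2}\frac{\mathrm{d}}{\mathrm{d}t}\|b\|_0^2+\mu\|a\|_0^2+\chi\|a+2b\|_0^2$. This is coercive with constants depending only on $\mu,\chi$, and no integration by parts in $x_3$ is needed except in the pressure term. The right side contains only $u_t$, tangential derivatives of $\nabla u$ (absorbed by a large multiple of the tangential estimate), and $\nabla_{\mathrm{h}}p$.

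\emph{The pressure.} For $i=j=1$ it is handled by integration by parts and the trace estimate, giving $\|\nabla p\|_1\|u\|_3^{1/2}\|u\|_{\underline{2},1}^{1/2}$. For $i=0$ it goes through the Stokes estimate for $(\partial_{\mathrm{h}}u,\partial_{\mathrm{h}}p)$, which do satisfy the boundary condition. This bounds $\|\nabla\partial_{\mathrm{h}}p\|_0$ by $\|\partial_{\mathrm{h}}W\|_0$ from the previous level, which forces the ordering: tangential, then $\partial_{\mathrm{h}}W$, then $\partial_3W$.

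\emph{Closing.} Add three more facts: $W_3=\partial_1w_2-\partial_2w_1$ is tangential, $\mathrm{div}\,w$ obeys a purely damped transport equation, and $\|w\|_2\lesssim\|w\|_{\underline{2},0}+\|W\|_1+\|\mathrm{div}\,w\|_1$. Together these give dissipation of $\|w\|_2$. The Stokes bound $\|u\|_3^2+\|\nabla p\|_1^2\lesssim\|W\|_1^2+\bar{\mathfrak{D}}$ then feeds back harmlessly, including into the pressure term via Young's inequality. Its non-explicit constant only ever multiplies quantities controlled at an earlier level, which can carry a large weight, or an $\epsilon$.
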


\begin{rem}\label{rem:21}
It is important to note that in the incompressible system, the pressure is not determined by the density. Since the pressure is now a new unknown without an ODE structure as in the compressible case, we employ the regularity theory of the Stokes problem to derive estimates for the pressure.
\end{rem}

%\begin{rem}\label{rem:13}
%{\color{blue}
%We remark that the key strategy for the proof of Theorems \ref{thm1} and \ref{thm2} lies in converting vertical estimates into horizontal ones; namely, closing the normal estimates by using tangential estimates. Indeed, such a strategy has proven effective in studying the viscous and non-resistive MHD system as well as the viscous surface wave problem. We refer to the works by \cite{FengSCM,JJTIWYHCMP, WYJAIM2020} for examples. However, the vorticity terms in the present case introduce additional difficulties.
%}
%\end{rem}

When the density $\rho\equiv 1$, the counterpart of Theorem \ref{thm2} for the homogeneous incompressible case reads as follows.
\begin{cor}\label{corthm2}
There exists a sufficiently small constant $\delta_3>0$ such that, if the initial data $(u^0, w^0)\in H^{2}_{0}\times H^{2}$ satisfies
$\mathrm{div}u^0=0$ in $\Omega$ and
$${\|(u^0, w^0)\|_{2}^2}\leqslant\delta_3. $$
Then the initial-boundary value  problem \eqref{0103} admits a unique strong solution
$(u,w)\in C^{0}(\mathbb{R}^+_0,$ $H^{2}_{0} \times H^{2}$) with a unique associated pressure $p$ (up to a constant) satisfying $\nabla p\in C^{0}(\mathbb{R}^+_0, L^{2})$;
moreover, the solution enjoys the estimate
\begin{align}\label{202604120810nm}
&e^{ct}\mathfrak{E} (t) + \int_{0}^{t}e^{ct/2} {\mathfrak{D}}(\tau)\mathrm{d}\tau\lesssim \|(u^0, w^0)\|_{2}^2.
\end{align}
\end{cor}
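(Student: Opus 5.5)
The corollary will be obtained as a direct specialization of Theorem \ref{thm2}. Setting $\bar\rho=1$ and $q^0\equiv 0$, the transport equation \eqref{0102n}$_1$ propagates $q\equiv 0$. Indeed, for the solution given by Theorem \ref{thm2}, $q$ solves $q_t+u\cdot\nabla q=0$ with $q(0)=0$ and $u\in L^2_tH^3$, hence Lipschitz in space, and $u|_{\partial\Omega}=0$ (so $u_3=0$ on the boundary). An $L^2$ energy estimate then gives
\[
\frac{\mathrm{d}}{\mathrm{d}t}\|q\|_0^2 = \int \mathrm{div}u\,q^2\,\mathrm{d}x = 0,
\]
so $q\equiv 0$. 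Consequently \eqref{0102n}--\eqref{0102c} coincides with \eqref{0103}, and every solution of \eqref{0103} is a solution of \eqref{0102n}--\eqref{0102c} with $q\equiv0$. Choosing $\delta_3:=\delta_2$, the smallness hypothesis $\|(0,u^0,w^0)\|_2^2=\|(u^0,w^0)\|_2^2\leqslant\delta_3$ is exactly that of Theorem \ref{thm2}. Existence, regularity $(u,w)\in C^0(\mathbb{R}^+_0,H^2_0\times H^2)$, $\nabla p\in C^0(\mathbb{R}^+_0,L^2)$, and the decay estimate \eqref{202609090810} then follow immediately, and \eqref{202609090810} is \eqref{202604120810nm}.

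Uniqueness for \eqref{0103} in the stated class will also be inherited. If $(u,w,p)$ and $(\tilde u,\tilde w,\tilde p)$ are two strong solutions of \eqref{0103}, then $(0,u,w)$ and $(0,\tilde u,\tilde w)$ are two strong solutions of \eqref{0102n}--\eqref{0102c} with the same data, so they coincide by Theorem \ref{thm2}. The pressure gradients then agree through the momentum equation, so $p$ is unique up to a constant.

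If one prefers not to rely on the inhomogeneous theorem, the fallback is to rerun its proof with $q\equiv0$. In that version the density-dependent nonlinearities $q(u_t+u\cdot\nabla u)$ and $q(w_t+u\cdot\nabla w)$ simply disappear, and the exponential decay comes from the same Poincaré inequality in the $x_3$ direction (using $u|_{\partial\Omega}=0$) together with the damping $4\chi w$. The only delicate point in either route is justifying $q\equiv0$ at the regularity level available, which the Lipschitz bound on $u$ handles. I therefore expect no genuine obstacle.
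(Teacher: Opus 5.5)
Your proposal is correct and follows the paper's route. The paper presents the corollary simply as the case $\rho\equiv 1$ (i.e.\ $\bar\rho=1$, $q^0=0$) of Theorem~\ref{thm2}, with existence, uniqueness and the decay estimate inherited directly. Your explicit $L^2$ identity showing that $q$ stays identically zero (using $\mathrm{div}\,u=0$ and $u|_{\partial\Omega}=0$) fills in the one step the paper leaves implicit; the same conclusion also follows from the paper's Gronwall bound \eqref{04171236nn}, which is proportional to $\|q^0\|_2^2$.
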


\vspace{3mm}
We now briefly sketch the proof strategy for Theorems \ref{thm1} and \ref{thm2}.
The proof is based on the local well-posedness of solutions, the \emph{a priori} estimates, and the standard continuity argument.
Since the local well-posedness of problems \eqref{0101n}--\eqref{0101c} and \eqref{0102n}--\eqref{0102c} can be established via  iteration schemes (see e.g., \cite{MAJBAL,kawashima1984systems,GYTILW1,JJTIIAWangC}), the key step is to derive the \emph{a priori} estimates.
We begin with the proof strategy for Theorem \ref{thm1}.

First, we consider the basic energy identities for the problem \eqref{0101n}--\eqref{0101c}, obtained by testing the continuity, momentum, and micro-rotation equations with  $q$, $u$ and $w$, respectively:
\begin{align}
&\label{2026nm12271537}
\frac{1}{2}\frac{\mathrm{d}}{\mathrm{d}t}\left(\int P'(\bar{\rho})\bar{\rho}^{-1}|q|^2\mathrm{d}x+\int\bar{\rho}|u|^2\mathrm{d}x\right)\nonumber\\[1mm]
&+(\mu+\chi)\int|\nabla u|^2\mathrm{d}x+(\mu+\mu'-\chi)\int|\mathrm{div}u|^2 \mathrm{d}x=2\chi\int\nabla \times w\cdot u\mathrm{d}x+N^1,\\[1mm]
&\label{2026nghb1540}
\frac{1}{2}\frac{\mathrm{d}}{\mathrm{d}t}\int\bar{\rho}|w|^2\mathrm{d}x
+4\chi\int|w|^2 \mathrm{d}x=2\chi\int\nabla \times u\cdot w\mathrm{d}x+N^2,\qquad\qquad\qquad\qquad
\end{align}
where $N^1$ and $N^2$ represent the integrals involving the nonlinear terms of $(q,u,w)$.
Compared with the classical compressible Navier-Stokes equations, the energy identities for the compressible micropolar fluid system introduce two additional terms involving vorticities. These terms are linear and anti-symmetric, posing significant challenges due to the lack of angular viscosity. However, we observe that the shear viscosity term generates additional velocity dissipation, which helps control these obstacle terms.
Indeed, it is easy to verify that
\begin{align*}%\label{2026asdg}
\int\nabla \times w\cdot u\mathrm{d}x=\int\nabla \times u\cdot w\mathrm{d}x,
\end{align*}
which allows us to deduce the following dissipation positivity:
\begin{align*}
\|(\nabla u,w)\|_{0}^2\lesssim&(\mu+\chi)\int|\nabla u|^2\mathrm{d}x+(\mu+\mu'-\chi)\int|\mathrm{div}u|^2 \mathrm{d}x+4\chi\int|w|^2 \mathrm{d}x\nonumber\\[1mm]
&-2\chi\int\nabla \times w\cdot u\mathrm{d}x
-2\chi\int\nabla \times u\cdot w\mathrm{d}x.
\end{align*}
Consequently, we can reduce \eqref{2026nm12271537}--\eqref{2026nghb1540} to the following $L^2$-inequality:
\begin{align}
&\label{2026nm1227nm1537}
\frac{1}{2}\frac{\mathrm{d}}{\mathrm{d}t}\left(\int P'(\bar{\rho})\bar{\rho}^{-1}|q|^2\mathrm{d}x+\int\bar{\rho}|u|^2\mathrm{d}x
+\int\bar{\rho}|w|^2\mathrm{d}x\right)
+c\|(\nabla u,w)\|_{0}^2\lesssim \big|N^1+N^2\big|.
\end{align}

To control the nonlinear terms and establish the existence of strong solutions, we must work with higher-order energy functionals; specifically, normal-derivative estimates for $(q,u,w)$ are required.
However, since \eqref{2026nm1227nm1537} does not apply to the higher-order normal spatial derivatives of $(q,u,w)$
in Sobolev norms due to the presence of physical boundaries, our proof requires several steps. Roughly speaking, the argument is divided into the tangential estimates and normal estimates for $(q, u, w)$, all of which are established under the \emph{a priori} assumption \eqref{aprpioses}.

Observe that tangential derivatives (including temporal and horizontal ones) naturally preserve the boundary condition,
which allows for integration by parts and the application of the Friedrichs inequality \eqref{friedrich} in the energy evolution.
Hence, the strategy employed to establish \eqref{2026nm1227nm1537} can also be applied to the tangential energy evolution (see Proposition \ref{pro12281500}).
Having established the tangential estimates for $(q,u,w)$,
we turn to the estimates of the normal derivatives. This constitutes a subtle and significant part of the overall proof.
Motivated by \cite{MANTIC481}, we first derive the estimates for $\partial_3q$  by exploiting its energy-dissipation structure.
Specifically, by combining the continuity equation with the third component of the momentum equations, we eliminate $\partial_{3}^2u_{3}$ to obtain:
\begin{align*}
\partial_{3}(q_t+u\cdot\nabla q)+c\partial_{3}q=g,
\end{align*}
where $g$ is mainly composed of $\partial_tu_3+\partial_j\partial_{\mathrm{h}}u_i+(\nabla\times w)_3+\emph{h.o.t}$, and $\emph{h.o.t}$ denotes the nonlinear terms involving $(q,u,w)$ (see \eqref{12281700}).
This equation can be viewed as an evolution equation for $\partial_3q$, which exhibits a structure analogous to the dissipative ODE $\partial_tf+cf=g$, modulo error terms, and thus displays a favorable energy-dissipation structure.
Since $(\nabla\times w)_3=\partial_1w_2-\partial_2w_1$, the linear terms in $g$ involve only tangential derivatives of $(\nabla u,u,w)$.
This fundamental relation consequently suggests that the higher-order estimates of $\partial_3q$ can be reduced to the tangential estimates; see Lemma \ref{lem:dfifessimM} for details.

Following the strategy in \cite{FengSCM} regarding the $\mathrm{div}$-$\mathrm{curl}$ decomposition for $w$,
we now establish the normal estimates for $(\nabla\times w,\mathrm{div}w)$, which simultaneously yields the dissipation estimates for $\partial_3^2u$.
We first derive the evolution equation for $\nabla\times w$ by applying the curl operator to the  micro-rotation equations.
By combining this with the momentum equations, we then obtain two ODE systems that decouple the third component from the horizontal components of the equations for $\partial_3^2u$ and $\nabla \times w$ (see \eqref{2026195637}--\eqref{2026195638}).
Let $W=(W_{\mathrm{h}},W_3)^\top:=\nabla\times w$. To illustrate the argument, we focus on the horizontal components for the pair $(\partial_{3}^2u,W)$:
\begin{equation*}%\label{2026195637n260421m}
\begin{cases}
(\mu + \chi)\partial_{3}^2u_{\mathrm{h}}+2\chi W_{\mathrm{h}} = \tilde{\mathcal{H}}_{\mathrm{h}} \quad (\text{where } \tilde{\mathcal{H}}_{\mathrm{h}} \sim \partial_tu_{\mathrm{h}} + \nabla_{\mathrm{h}} q + \nabla\partial_{\mathrm{h}} u + \emph{h.o.t}) & \mbox{in } \Omega ,\\[1.5mm]
\bar{\rho}\big(\partial_{t}W_{\mathrm{h}}+u\cdot\nabla W_{\mathrm{h}} \big)+4\chi W_{\mathrm{h}}+2\chi\partial_{3}^2u_{\mathrm{h}} = \tilde{\mathcal{G}}_{\mathrm{h}} \quad (\text{where } \tilde{\mathcal{G}}_{\mathrm{h}} \sim \nabla\partial_{\mathrm{h}} u + \emph{h.o.t}) & \mbox{in } \Omega.
\end{cases}
\end{equation*}
The presence of the term  $\mu \partial_{3}^2u_{\mathrm{h}}$ allows us to establish positive dissipation for $(\partial_{3}^2u_{\mathrm{h}},W_{\mathrm{h}})$ from the coupling between $\partial_{3}^2u_{\mathrm{h}}$ and $W_{\mathrm{h}}$; see the derivation of \eqref{20926504113339nm} for details.
It is worth noting that aside from the pressure term $\nabla_{\mathrm{h}} q$, the linear terms in $\tilde{\mathcal{H}}_{\mathrm{h}}$ and $\tilde{\mathcal{G}}_{\mathrm{h}}$
involve only tangential or first-order normal derivative of $u$.
This leads us to anticipate that the higher-order estimates for $(\partial_{3}^2u_{\mathrm{h}},W_{\mathrm{h}})$ can also be reduced to their tangential estimates.
However, since we possess dissipation only for $\partial_3q$ (but not $\nabla_{\mathrm{h}}q$) and cannot apply the Stokes estimate to derive bounds for $\nabla_{\mathrm{h}}q$ due to the lack of control over $W$ established previously, the pressure term $\nabla_{\mathrm{h}} q$ presents a significant challenge:
$$\int\partial_{\mathrm{h}}^{j}\partial_{3}^{1-i}\nabla_{\mathrm{h}}q\cdot\partial_{\mathrm{h}}^{j}\partial_{3}^{1-i}\partial_3^2u_{\mathrm{h}}\mathrm{d}x.$$
This differs markedly from the setting in \cite{FengSCM}, where magnetic tension supports the dissipation mechanism alongside the pressure in Lagrangian coordinates.
To circumvent this difficulty, we analyze three cases based on $0\leqslant j\leqslant i\leqslant1$. The most delicate case occurs when $i=j=1$:
$$\int\partial_{\mathrm{h}}\nabla_{\mathrm{h}}q\cdot\partial_{\mathrm{h}}\partial_3^2u_{\mathrm{h}}\mathrm{d}x.$$
Given the presence of sufficient horizontal derivatives,
we address this integral by integrating by parts and applying the refined trace estimate \eqref{37190928}, which yields the bound $\|\nabla q\|_{1}\|u\|_{3}^{1/2}\|u\|_{\underline{2},1}^{1/2}$ (see \eqref{20260411na}).
This integral is finally closed via Young's inequality combined with the tangential energy estimate and the bound for $\|u\|_{3}+\|\nabla q\|_{1}$.
Similarly, in deriving the normal estimates for $(\partial_3^2u_3, W_3)$, we encounter a comparable difficulty:
$$\int\partial_{\mathrm{h}}^{j}\partial_{3}^{1-i}\partial_3q\partial_{\mathrm{h}}^{j}\partial_{3}^{1-i}\partial_3^2u_3\mathrm{d}x.$$
This integral can be controlled by invoking Young's inequality, combined with the estimate for $\partial_3q$ established in Lemma \ref{lem:dfifessimM} and the bound for $\|\partial_3^{3-i}u\|_{\underline{i},0}$.
For the detailed derivation of the estimates for $(\partial_3^2u,\nabla\times w)$,  we refer to Lemma \ref{uwnormal}.

Additionally, by applying the divergence operator to the micro-rotation equations, we derive the evolution equation for $\mathrm{div} w$,
This equation also exhibits a structure analogous to the dissipative ODE $\partial_tf+cf=g$, modulo error terms.
Hence, the energy-dissipation estimate for $\mathrm{div} w$ follows readily; see Lemma \ref{divwnormal} for further details.
Therefore, collecting the tangential and normal estimates of $(q,u,w)$ recursively, we obtain
\begin{align*}
&\frac{\mathrm{d}}{\mathrm{d}t}\tilde{\mathcal{E}} + {\mathcal{D}}\leqslant0,
\end{align*}
where the energy functional $\tilde{\mathcal{E}}$ is equivalent to ${\mathcal{E}}$.
Consequently, the desired \emph{a priori} estimate can be proven.
Based on the \emph{a priori} estimate, together with the local well-posedness result and a standard continuity argument, we can promptly establish Theorem \ref{thm1}.
The detail proof will be presented in Section \ref{2025thm1}.

We now outline the proof strategy for Theorem \ref{thm2}. Analogous to the proof of Theorem \ref{thm1}, the analysis is divided into tangential and normal estimates for $(u, w)$, all established under the \emph{a priori} assumption \eqref{aprpioseses}.
Furthermore, we adopt the same strategy used in Theorem \ref{thm1} to derive the normal estimates for $w$.
A key difference lies in the treatment of the pressure $p$.
Since $p$ is now a new unknown lacking an ODE structure as in the compressible case,
the only viable approach to derive estimates for $p$ is to employ the elliptic regularity theory of the Stokes problem, which simultaneously yields estimates for $u$.
In this framework, the divergence-free condition $\mathrm{div}u=0$ is essential.
This condition also allows us to express $\partial_3u_3$ as $-\mathrm{div}_{\mathrm{h}}u_{\mathrm{h}}$.
For instance, by invoking this condition, we transform the estimate for $\|\partial_3^{3}u_3\|_{0}$ into $\|\partial_3^2\mathrm{div}_{\mathrm{h}}u_{\mathrm{h}}\|_{0}$, thereby converting normal estimates into horizontal ones, and simplifying the estimates for $W_3$.
Conversely, when deriving normal estimates for $W_{\mathrm{h}}$, we face an integral difficulty similar to that of the compressible case:
$$\int\partial_{\mathrm{h}}^{j}\partial_{3}^{1-i}\nabla_{\mathrm{h}}p\cdot\partial_{\mathrm{h}}^{j}\partial_{3}^{1-i}\partial_3^2u_{\mathrm{h}}\mathrm{d}x.$$
The most delicate cases arise when $i=j=1$ and $i=j=0$.
For the case $i=j=1$, we follow the same argument as in the compressible case to close the estimates using the bound $\|\nabla p\|_{1}\|u\|_{3}^{1/2}\|u\|_{\underline{2},1}^{1/2}$.
For the case $i=j=0$, we utilize the Stokes estimate to establish regularity for $\|\nabla\partial_{\mathrm{h}}p\|_{0}$
(see \eqref{11292030es}).
This allows us to circumvent the difficulty by transforming the estimate for
$\|(\partial_3^3u,\partial_3W)\|_{0}$ into the estimate for $\|\partial_{\mathrm{h}}(\partial_3^2u,W)\|_{0}$, utilizing the bound on $\|\nabla\partial_{\mathrm{h}}p\|_{0}$ and Young's inequality.
Moreover, we have the same energy-dissipation estimate for $\mathrm{div}w$ as in the compressible case.
Consequently, we obtain
\begin{align*}
&\frac{\mathrm{d}}{\mathrm{d}t}\tilde{\mathfrak{E}} + {\mathfrak{D}}\leqslant0,
\end{align*}
where the energy functional $\tilde{\mathfrak{E}}$ is equivalent to $\mathfrak{E}$.
In particular,
\begin{equation*}
\tilde{\mathfrak{E}}\lesssim \mathfrak{D}.
\end{equation*}
Accordingly, the exponential time-decay of $\mathfrak{E}$ and the integral time-decay of $\mathfrak{D}$ are established.
The estimate for $\|q\|_{2}^2$ in the energy then follows via Gronwall's inequality and the integral time-decay of $\mathfrak{D}$.
Finally, equipped with these \emph{a priori} estimates, the local well-posedness result, and a standard continuity argument, we complete the proof of Theorem \ref{thm2}. The detailed proof is presented in Section \ref{2025thm2}.

%%%%%%%%%%%%%%%%%%%%%%%%%%%%%%%%%%%%%%%%%%%%%%%%%%%%%%%%%%%%%%%%%%%%%%%%%%%%%%%%%%%%%%%%%%%%%%%%%%%%%%%%%%%%%%%%55
\section{Proof of Theorem \ref{thm1}} \label{2025thm1}
This section is devoted to the proof of Theorem \ref{thm1}, the key step in the proof of Theorem \ref{thm1} is to derive the \emph{a priori} energy estimates.
To this end, let $(q,u,w)$ be a solution to the problem \eqref{0101n}--\eqref{0101c} defined on $\Omega\times [0,T]$ with $T>0$.
We make the \emph{a priori} assumption that
\begin{align}
&\label{aprpioses}
\sup_{t\in[0,T]}\left\|(q, u, w)(t)\right\|_{2}< \delta\in(0,1),
\end{align}
where  $\delta$ is a sufficiently small constant.
Furthermore, we assume that the solution possesses proper regularity so that the procedure of formal calculations makes sense.
In view of \eqref{aprpioses} and the Sobolev embedding inequality \eqref{embed2}, it is straightforward to deduce that
\begin{align}\label{202604121856nh}
\bar{\rho}/2\leqslant \inf_{x\in\Omega}\{\bar{\rho}+q\}\leqslant\sup_{x\in\Omega}\{\bar{\rho}+q\}\leqslant 3\bar{\rho}/2.
\end{align}
Next, we shall first establish some preliminary estimates that will be instrumental in deriving the energy evolution.
Subsequently, we derive the \emph{a priori}  energy inequalities for the solutions.

%%%%%%%%%%%%%%%%%%%%%%%%%%%%%%%%%%%%%%%%%%%%%%%%%%%%%%%%%
\subsection{Preliminary estimates}
First,
in order to use the nonlinear structure of \eqref{0101n} to derive the tangential energy estimates
for the temporal and horizontal spatial derivatives of the solution, we apply $\partial^{\alpha}$ with $\alpha\in\mathbb{N}^{1+2}$ to the problem \eqref{0101n}--\eqref{0101c} to obtain:
\begin{equation}\label{20260412ne}
\begin{cases}
\partial^{\alpha}q_t +u \cdot \nabla \partial^{\alpha}q+\bar{\rho}\mathrm{div}\partial^{\alpha}u=\mathcal{N}^{1,\alpha}\quad &\mbox{in } \Omega,\\[1mm]
(\bar{\rho}+q)(\partial^{\alpha}u_t + u \cdot \nabla \partial^{\alpha}u)
 - (\mu + \chi) \Delta \partial^{\alpha}u-(\mu+\mu'- \chi)\nabla\mathrm{div} \partial^{\alpha}u\quad \\[1mm]
=- P'(\bar{\rho})\nabla \partial^{\alpha}q+2\chi \nabla \times \partial^{\alpha}w+\mathcal{N}^{2,\alpha}+\mathcal{N}^{3,\alpha}
 \quad &\mbox{in } \Omega, \\[1mm]
(\bar{\rho}+q)\big(\partial^{\alpha}w_t + u \cdot \nabla \partial^{\alpha}w\big)  + 4\chi \partial^{\alpha} w = 2\chi \nabla \times \partial^{\alpha}u
+\mathcal{N}^{4,\alpha} \quad &\mbox{in } \Omega,\\[1mm]
\partial^{\alpha}u=0 &\mbox{on } \partial\Omega,
\end{cases}
\end{equation}
where we have defined that
\begin{align*}
&\mathcal{N}^{1,\alpha}:=-[\partial^{\alpha},u ]\cdot \nabla q -\partial^{\alpha}(q\mathrm{div}u),\\[1mm]
&\mathcal{N}^{2,\alpha}:=-(\bar{\rho}+q)[\partial^{\alpha},u ]\cdot \nabla u ,\\[1mm]
&\mathcal{N}^{3,\alpha}:=-[\partial^{\alpha},q](u_t + u \cdot \nabla u)-\partial^{\alpha}\nabla\mathcal{N}_{P},\\[1mm]
&\mathcal{N}^{4,\alpha}:=-(\bar{\rho}+q)[\partial^{\alpha},u ]\cdot \nabla w-[\partial^{\alpha},q](w_t+u\cdot \nabla w).
\end{align*}

When using the linear structure of \eqref{0101n}, it is more convenient to formulate it as a perturbation of the
linearized system:
\begin{equation}\label{0101nb}
\begin{cases}
q_t +\bar{\rho}\mathrm{div}u=\mathcal{N}^5\quad &\mbox{in } \Omega,\\[0.5mm]
\bar{\rho}u_t +  P'(\bar{\rho})\nabla q - (\mu + \chi) \Delta u-(\mu+\mu'- \chi)\nabla\mathrm{div} u-2\chi \nabla \times w\quad\\
=\mathcal{N}^6+\mathcal{N}^7 \quad &\mbox{in } \Omega, \\[0.5mm]
\bar{\rho}w_t  + 4\chi w- 2\chi \nabla \times u=\mathcal{N}^8
 \quad &\mbox{in } \Omega,\\[0.5mm]
u=0 \quad &\mbox{on } \partial\Omega,\quad
\end{cases}
\end{equation}
where we have defined that
\begin{align*}
&\mathcal{N}^5:=-u \cdot \nabla q-q\mathrm{div}u,\\[1mm]
&\mathcal{N}^6:=
-\bar{\rho} u \cdot \nabla u-q(u_t + u \cdot \nabla u),\quad\mathcal{N}^7:=-\nabla\mathcal{N}_{P},\\[1mm]
&\mathcal{N}^8:=-\bar{\rho}u \cdot \nabla w-q(w_t + u \cdot \nabla w).
\end{align*}
We now establish estimates for these nonlinear terms, which will be used in the energy evolution.
\begin{lem}
\label{201806291049}
Under assumption \eqref{aprpioses} with sufficiently small $\delta$, the following estimates hold:
\begin{enumerate}
\item[(1)] Estimates for $\mathcal{N}^{k,\alpha}$ with $1\leqslant k\leqslant 4$:
\begin{align}
\label{12272120}
&\|(\mathcal{N}^{1,\alpha},\mathcal{N}^{2,\alpha},\mathcal{N}^{4,\alpha}) \|_0\lesssim\sqrt{\mathcal{E}\mathcal{D}}
\quad\;\mathrm{for}\;\;|\alpha|\leqslant2,\\[1mm]
\label{12272120n}
&\|\mathcal{N}^{3,\alpha} \|_0\lesssim\sqrt{\mathcal{E}\mathcal{D}}\quad\;\;\mathrm{for}\;\;|\alpha|\leqslant1\;\mathrm{or}\;\alpha_0=1.
\end{align}
\item[(2)] Estimates for $\mathcal{N}^{k}$ with $5\leqslant k\leqslant 8$:
\begin{align}
& \label{1227213738}
%\sum_{k=5}^{8}\|\mathcal{N}^{k}\|_{i}
\|(\mathcal{N}^{6},\mathcal{N}^{7})\|_{i}\lesssim\|(q,u)\|_{2}\big(\|u\|_{2+i}+\|(\nabla w,\nabla q)\|_{i}\big)
\quad\mathrm{for}\;\;0\leqslant i\leqslant1,\\[1mm]
& \label{1227213738n}
\|(\mathcal{N}^{5},\mathcal{N}^{8})\|_{i}\lesssim\|(q,u)\|_{2}\big(\|(u,w)\|_{1+i}+\|\nabla q\|_{i}\big)
\quad\mathrm{for}\;\;0\leqslant i\leqslant1.
\end{align}
\item[(3)]
Let $\mathcal{N}^{9}$ and $\mathcal{N}^{10}$ be defined by
\begin{align}
&\label{20260412a}
\mathcal{N}^{9}:=-\nabla\times\big(q( w_{t}+u\cdot\nabla w)\big)-\bar{\rho}[\nabla\times, u]\cdot\nabla w,\\[1mm]
&\label{20260412b}
\mathcal{N}^{10}:=-\mathrm{div}\big(q( w_{t}+u\cdot\nabla w)\big)-\bar{\rho}[\mathrm{div}, u]\cdot\nabla w.
\end{align}
Then we have
\begin{align}
& \|\mathcal{N}^{9}\|_1+\|\mathcal{N}^{10}\|_1\lesssim \sqrt{\mathcal{E}\mathcal{D}}. \label{12281530}
\end{align}
\end{enumerate}
\end{lem}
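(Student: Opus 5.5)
The plan is to reduce every bound to the product estimate $\|fg\|_0\lesssim\|f\|_{L^\infty}\|g\|_0$ and $\|fg\|_0\lesssim\|f\|_{L^3}\|g\|_{L^6}\lesssim\|f\|_1\|g\|_1$. These follow from the 3D Sobolev embeddings on the strip, namely $H^2\hookrightarrow L^\infty$ and $H^1\hookrightarrow L^p$ for $2\leqslant p\leqslant6$, which the paper records as \eqref{embed2}. Together with \eqref{202604121856nh}, the smallness assumption \eqref{aprpioses} and smoothness of $P$ also allow us to treat $\bar\rho+q$ and $P''(\bar\rho+\cdot)$ as bounded coefficients, with $\|\mathcal{N}_P\|_k\lesssim\|q\|_2\|q\|_k$-type bounds.

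In each term we expand the commutator or product by the Leibniz rule. We place in $\mathcal{E}$ the factor with fewer derivatives, or the one controlled only in energy norms: $\|(q,u,w)\|_2$, $\|(q_t,w_t)\|_1$, $\|u_t\|_0$. We place in $\mathcal{D}$ the factor with more derivatives or with time derivatives: $\|u\|_3$, $\|w\|_2$, $\|\nabla q\|_1$, $\|(q_t,u_t,w_t)\|_1$.

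\textbf{Part (1).} For $|\alpha|\leqslant2$ with parabolic counting, $\alpha$ is either $\partial_t$ or a horizontal derivative of order at most $2$. Consider first $\mathcal{N}^{1,\alpha}$ with $\alpha=\partial_{\mathrm h}^2$. The typical terms are $\partial_{\mathrm h}^2u\cdot\nabla q$, $\partial_{\mathrm h}u\cdot\nabla\partial_{\mathrm h}q$ and $q\,\partial_{\mathrm h}^2\mathrm{div}u$. They are bounded respectively by $\|u\|_3\|q\|_2$, by $\|\nabla u\|_{L^\infty}\|\nabla q\|_1\lesssim\|u\|_3\|q\|_2$, and by $\|q\|_{L^\infty}\|u\|_3$. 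Each is $\lesssim\sqrt{\mathcal{E}\mathcal{D}}$, using $\|q\|_2^2\leqslant\mathcal{E}$ and $\|u\|_3^2\leqslant\mathcal{D}$.

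For $\alpha=\partial_t$ the terms are $u_t\cdot\nabla q$ and $q_t\mathrm{div}u+q\,\mathrm{div}u_t$. They are bounded by $\|u_t\|_1\|q\|_2$ and by $\|q_t\|_1\|u\|_2+\|q\|_2\|u_t\|_1$, where $u_t$ and $q_t$ in $H^1$ are taken from $\mathcal{D}$ and the other factor from $\mathcal{E}$.

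$\mathcal{N}^{2,\alpha}$ and $\mathcal{N}^{4,\alpha}$ are handled identically. In $\mathcal{N}^{4,\alpha}$ the worst terms are $[\partial^\alpha,q]w_t$ and $\partial_{\mathrm h}^2u\cdot\nabla w$. The former contains $\partial_{\mathrm h}^2q\,w_t$, bounded by $\|\nabla q\|_1\|w_t\|_1$, and $\partial_{\mathrm h}q\,\partial_{\mathrm h}w_t$, bounded by $\|\nabla q\|_{L^\infty}$-free means as $\|\partial_{\mathrm h}q\|_{L^3}\|\partial_{\mathrm h}w_t\|\ldots$. This needs care: $\partial_{\mathrm h}w_t\in L^2$ only, since $\|w_t\|_1$ sits in both $\mathcal{E}$ and $\mathcal{D}$. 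We therefore use $\|\partial_{\mathrm h}q\|_{L^\infty}$, which fails at the $H^2$ level. Instead we write the bound as $\|\partial_{\mathrm h}q\|_{L^3}\|\partial_{\mathrm h}w_t\|_{L^6}$, which is not available either, so we substitute $w_t$ from \eqref{0101nb}$_3$ or note $|\alpha|\leqslant2$ with $\alpha=\partial_{\mathrm h}^2$ never meets $w_t$. Indeed the term $[\partial^\alpha,q]w_t$ with $\alpha=\partial_{\mathrm h}$ gives $\partial_{\mathrm h}q\,w_t$, bounded by $\|\nabla q\|_{L^3}\|w_t\|_{L^6}\lesssim\|q\|_2\|w_t\|_1$.

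For $\alpha=\partial_{\mathrm h}^2$ we rewrite $w_t+u\cdot\nabla w=(\bar\rho+q)^{-1}(2\chi\nabla\times u-4\chi w)$. This converts $[\partial^\alpha,q](w_t+u\cdot\nabla w)$ into terms like $\partial_{\mathrm h}q\,\partial_{\mathrm h}(\nabla u,w)$ and $\partial_{\mathrm h}^2q\,(\nabla u,w)$. These are bounded by $\|q\|_2(\|u\|_3+\|w\|_2)$. The same substitution, using the momentum equation for $u_t+u\cdot\nabla u$, handles $\mathcal{N}^{3,\alpha}$ when $|\alpha|\leqslant1$.

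The restriction in \eqref{12272120n} reflects a derivative count. At $\alpha=\partial_{\mathrm h}^2$, $\partial^\alpha\nabla\mathcal{N}_P$ involves $q\nabla^3q$, which is not in $\mathcal{D}$; at $\alpha_0=1$ we only get $\nabla(q q_t)$, bounded by $\|q\|_2\|q_t\|_1$.

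\textbf{Parts (2) and (3).} For part (2), each $\mathcal{N}^k$ is a quadratic expression in at most one derivative of $(q,u,w)$ or in $(u_t,w_t)$. For example $\|q u_t\|_1\lesssim\|q\|_2\|u_t\|_1$; we replace $u_t$ via \eqref{0101nb}$_2$ when only $\|u\|_{2+i}$ and $\|\nabla(q,w)\|_i$ are allowed on the right. Then the $H^i$ norm, $i\leqslant1$, is distributed as above; note $\|\nabla\mathcal{N}_P\|_i\lesssim\|q\|_2\|\nabla q\|_i$.

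For part (3), $\nabla\times(q(w_t+u\cdot\nabla w))$ in $H^1$ again requires the substitution $w_t+u\cdot\nabla w=(\bar\rho+q)^{-1}(2\chi\nabla\times u-4\chi w)$. This yields $\nabla\times(\tfrac{q}{\bar\rho+q}(\nabla\times u-w))$, whose $H^1$ norm is $\lesssim\|q\|_2(\|u\|_3+\|w\|_2)+\|\nabla q\|_{1}$-weighted products, all of the form $\sqrt{\mathcal{E}\mathcal{D}}$. The commutators $[\nabla\times,u]\cdot\nabla w$ and $[\mathrm{div},u]\cdot\nabla w$ contain only $\nabla u\,\nabla w$, with no second derivatives of $w$. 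Their $H^1$ norm is $\lesssim\|u\|_3\|w\|_2$ by the $L^\infty$ embedding of $\nabla u$.

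\textbf{Main obstacle.} The main obstacle is derivative loss in the terms containing $w_t$, $u_t$ and top-order $q$: $w$ has no diffusion, and $\mathcal{D}$ controls neither $\nabla^2 w_t$ nor $\nabla^3 q$. The key device is eliminating material derivatives via the equations as above, and checking that $\mathcal{N}^{3,\alpha}$ is only claimed in the ranges where this is possible.
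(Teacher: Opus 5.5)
Your proposal is correct and follows essentially the paper's own route: product/Sobolev estimates under \eqref{aprpioses}, with the material derivatives $u_t+u\cdot\nabla u$ and $w_t+u\cdot\nabla w$ eliminated through \eqref{0101n}$_2$--\eqref{0101n}$_3$, exactly as in \eqref{202604122032}--\eqref{202604122036}.

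One cleanup is needed in your $\mathcal{N}^{4,\alpha}$ paragraph. The intermediate claims $\|\partial_{\mathrm h}^2q\,w_t\|_0\lesssim\|\nabla q\|_1\|w_t\|_1$ and ``$\alpha=\partial_{\mathrm h}^2$ never meets $w_t$'' are false, since in 3D an $L^2\cdot L^6$ product is only in $L^{3/2}$. Delete them. The substitution you then perform for $\alpha=\partial_{\mathrm h}^2$ is what actually closes the bound.
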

\begin{pf}
By virtue of \eqref{aprpioses}--\eqref{202604121856nh} and the product estimate \eqref{product}, we readily obtain \eqref{12272120n} as well as the following estimate:
\begin{align}
&\label{202604122002}
\|\mathcal{N}^7\|_{i}\lesssim\|q\|_{2}\|\nabla q\|_{i}\quad\mathrm{for}\;\;0\leqslant i\leqslant1.
\end{align}

Moreover, in view of  \eqref{aprpioses}--\eqref{202604121856nh} and \eqref{202604122002},
it follows directly from  \eqref{0101n}$_2$--\eqref{0101n}$_3$ that
\begin{align}
&\label{202604122032}
\|(u_{t}+u\cdot\nabla u)\|_{i}\lesssim\|u\|_{2+i}+\|(\nabla q,\nabla w)\|_{i}\quad\mathrm{for}\;\;0\leqslant i\leqslant1,\\[1mm]
&\label{202604122036}
\|(w_{t}+u\cdot\nabla w)\|_{j}\lesssim\|(\nabla u,w)\|_{j}\quad\mathrm{for}\;\;0\leqslant j\leqslant2.
\end{align}
Finally, exploiting \eqref{aprpioses}--\eqref{202604121856nh}, \eqref{202604122002}--\eqref{202604122036} and \eqref{product}, we arrive at \eqref{12272120}, \eqref{1227213738}--\eqref{1227213738n} and \eqref{12281530}.
This completes the proof.
\hfill $\Box$
\end{pf}

%%%%%%%%%%%%%%%%%%%%%%%%%%%%%%%%%%%%%%%%%%%%%%%%%%%%%%%%%%%%%%%%%%%%%%%%%
\subsection{Tangential estimates of $(q, u, w)$}
In this subsection, we are devoted to deriving the tangential estimates of $(q,u,w)$.
We define the tangential energy by
\begin{align} \label{20260412nh}
\bar{\mathcal{E}}:=\sum_{j=0}^{1}\|\partial_t^j(q,u,w)\|_{\underline{2-j},0}^2
\end{align}
and the corresponding tangential dissipation by
\begin{align} \label{20260412nhn}
\bar{\mathcal{D}}:=\sum_{j=0}^{1}\left(\|\partial_t^ju\|_{\underline{2-j},1}^2+\|\partial_t^jw\|_{\underline{2-j},0}^2\right).
\end{align}
The tangential energy evolution for $(q,u,w)$ is established as follows.
\begin{pro}\label{pro12281500}
Under assumption \eqref{aprpioses} with sufficiently small $\delta$,
there exists a function $\tilde{\bar{\mathcal{E}}}$, which is equivalent to $\bar{\mathcal{E}}$, such that
\begin{align} \label{12281503nmnm}
&\frac{\mathrm{d}}{\mathrm{d}t}\tilde{\bar{\mathcal{E}}}+c\bar{\mathcal{D}}\lesssim \sqrt{\mathcal{E}}\mathcal{D}.
\end{align}
\end{pro}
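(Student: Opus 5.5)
The plan is a weighted $L^2$ energy estimate for each tangential derivative, carried out on the nonlinear form \eqref{20260412ne}. Fix $\alpha=(\alpha_0,\alpha_1,\alpha_2)$ with $\alpha_0\leqslant1$ and $\alpha_1+\alpha_2\leqslant 2-\alpha_0$. I will take the $L^2$ inner products of
\begin{itemize}
\item \eqref{20260412ne}$_1$ with $P'(\bar\rho)\bar\rho^{-1}\partial^\alpha q$,
\item \eqref{20260412ne}$_2$ with $\partial^\alpha u$,
\item \eqref{20260412ne}$_3$ with $\partial^\alpha w$,
\end{itemize}
and then sum over all such $\alpha$. Since $\partial^\alpha u=0$ on $\partial\Omega$, every integration by parts in $x$ produces no boundary terms.

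\textbf{Cancellations.} The pressure coupling cancels exactly: $-P'(\bar\rho)\int\nabla\partial^\alpha q\cdot\partial^\alpha u\,\mathrm{d}x=P'(\bar\rho)\int \partial^\alpha q\,\mathrm{div}\partial^\alpha u\,\mathrm{d}x$, and this is cancelled by the term $\bar\rho\,\mathrm{div}\partial^\alpha u$ in the continuity equation tested with $P'(\bar\rho)\bar\rho^{-1}\partial^\alpha q$. For the weighted time derivatives I will use the exact continuity equation $q_t+\mathrm{div}((\bar\rho+q)u)=0$. It gives
$$\int(\bar\rho+q)(\partial^\alpha u_t+u\cdot\nabla\partial^\alpha u)\cdot\partial^\alpha u\,\mathrm{d}x=\frac12\frac{\mathrm{d}}{\mathrm{d}t}\int(\bar\rho+q)|\partial^\alpha u|^2\mathrm{d}x,$$
and the same identity holds for $w$. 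The only transport remainder is then $\frac12\int \mathrm{div}u\,|\partial^\alpha q|^2\mathrm{d}x$, and I will bound it case by case.
\begin{itemize}
\item If $\alpha\neq0$, use $\|\nabla u\|_{L^\infty}\|\partial^\alpha q\|_0^2\lesssim \|u\|_3\|q\|_2\|\nabla q\|_1$ when $\alpha$ is purely horizontal. When $\alpha_0=1$, use instead $\|q_t\|_1\leqslant\sqrt{\mathcal D}$.
\item If $\alpha=0$, use $\|\mathrm{div}u\|_{L^2}\|q\|_{L^3}\|q\|_{L^6}\lesssim\sqrt{\mathcal D}\sqrt{\mathcal E}\,\|\nabla q\|_0$.
\end{itemize}
Both cases are $\lesssim\sqrt{\mathcal E}\mathcal D$. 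Accordingly I define
$$\tilde{\bar{\mathcal E}}:=\sum_\alpha\int\Big(P'(\bar\rho)\bar\rho^{-1}|\partial^\alpha q|^2+(\bar\rho+q)(|\partial^\alpha u|^2+|\partial^\alpha w|^2)\Big)\mathrm{d}x,$$
which is equivalent to $\bar{\mathcal E}$ by \eqref{202604121856nh}.

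\textbf{Coercivity.} The linear dissipative part is
$$(\mu+\chi)\|\nabla\partial^\alpha u\|_0^2+(\mu+\mu'-\chi)\|\mathrm{div}\partial^\alpha u\|_0^2+4\chi\|\partial^\alpha w\|_0^2-4\chi\int\nabla\times\partial^\alpha u\cdot\partial^\alpha w\,\mathrm{d}x,$$
where I used $\int\nabla\times\partial^\alpha w\cdot\partial^\alpha u=\int\nabla\times\partial^\alpha u\cdot\partial^\alpha w$. For $v\in H^1_0$ one has $\|\nabla v\|_0^2=\|\nabla\times v\|_0^2+\|\mathrm{div}v\|_0^2$. Splitting $(\mu+\chi)=\mu+\chi$ therefore gives
$$\mu\|\nabla\partial^\alpha u\|_0^2+(\mu+\mu')\|\mathrm{div}\partial^\alpha u\|_0^2+\chi\|\nabla\times\partial^\alpha u-2\partial^\alpha w\|_0^2.$$
If $\mu+\mu'<0$, I absorb this term into $\mu\|\nabla\partial^\alpha u\|_0^2$ using $\|\mathrm{div} v\|_0^2\leqslant 3\|\nabla v\|_0^2$ and the structural restriction $2\mu+3\mu'-4\chi\geqslant0$, possibly after giving up a small multiple of the $\chi$-term. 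Since $\|\partial^\alpha w\|_0\lesssim\|\nabla\times\partial^\alpha u-2\partial^\alpha w\|_0+\|\nabla\partial^\alpha u\|_0$, the Friedrichs inequality then yields the lower bound $c(\|\partial^\alpha u\|_1^2+\|\partial^\alpha w\|_0^2)$. Summing over $\alpha$ gives exactly $c\bar{\mathcal D}$.

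\textbf{Nonlinear terms.} The terms $\mathcal N^{1,\alpha},\mathcal N^{2,\alpha},\mathcal N^{4,\alpha}$ are handled by \eqref{12272120} with Cauchy--Schwarz. For $\mathcal N^{1,\alpha}$ the partner is $\|\partial^\alpha q\|_0$, which lies in $\sqrt{\mathcal D}$ whenever $\alpha\neq0$. For $\alpha=0$ the nonlinearity is $-\mathrm{div}(qu)$ tested with $q$, which I integrate by parts and bound like the transport term. The partners $\|\partial^\alpha u\|_0$ and $\|\partial^\alpha w\|_0$ always lie in $\sqrt{\mathcal D}$. The term $\mathcal N^{3,\alpha}$ is covered by \eqref{12272120n} except when $\alpha$ is purely horizontal with $|\alpha|=2$. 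This is the step I expect to be the main obstacle, since the top-order density derivative $\partial_{\mathrm h}^2\nabla q$ is not controlled in $L^2$. I will handle its two pieces separately.
\begin{itemize}
\item For $\partial_{\mathrm h}^2\nabla\mathcal N_P$, integrate by parts once: $\int\partial_{\mathrm h}^2\nabla\mathcal N_P\cdot\partial_{\mathrm h}^2u=-\int\partial_{\mathrm h}^2\mathcal N_P\,\mathrm{div}\partial_{\mathrm h}^2u$. This is bounded by $\|\mathcal N_P\|_2\|u\|_3\lesssim\|q\|_2\|\nabla q\|_1\|u\|_3\lesssim\sqrt{\mathcal E}\mathcal D$.
\item For $[\partial_{\mathrm h}^2,q](u_t+u\cdot\nabla u)$, move one horizontal derivative onto $\partial_{\mathrm h}^2 u$. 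This leaves factors of the form $\|\nabla q\|_1\|u_t+u\cdot\nabla u\|_1\|u\|_3$, and these are controlled via \eqref{202604122032}.
\end{itemize}
Collecting all terms gives \eqref{12281503nmnm}.
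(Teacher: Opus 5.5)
Your main argument is the paper's: the weighted energy identities, the exact cancellation of the pressure coupling, coercivity from $\|\nabla v\|_0^2=\|\nabla\times v\|_0^2+\|\mathrm{div}v\|_0^2$ for $v\in H^1_0$, and the separate treatment of $\partial_{\mathrm h}^2$ in $\mathcal N^{3,\alpha}$. Your handling of $\mu+\mu'<0$ and of the $\alpha=0$ term in $\mathcal N^{1,\alpha}$ by integration by parts is, if anything, more careful than the paper's. However, two steps fail as written.

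\textbf{The index set.} You read $\underline{2-j}$ in \eqref{20260412nh}--\eqref{20260412nhn} literally and included $\partial^\alpha=\partial_t\partial_{\mathrm h}$. You then cite \eqref{12272120}--\eqref{12272120n} for these indices. In the paper $|\alpha|=2\alpha_0+\alpha_1+\alpha_2$ is the parabolic count. So Lemma \ref{201806291049}, and the proof of the proposition, cover only $\partial^\alpha\in\{1,\partial_{\mathrm h},\partial_{\mathrm h}^2,\partial_t\}$. For $\partial^\alpha=\partial_t\partial_1$ those bounds are not available:
\begin{itemize}
\item $[\partial_t\partial_1,q](u_t+u\cdot\nabla u)$ contains $\partial_1q\,\partial_t^2u$, and $\mathcal N^{4,\alpha}$ contains $\partial_1q\,\partial_t^2w$. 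Neither $\mathcal E$ nor $\mathcal D$ contains any second time derivative.
\item The term $\int\partial_tu\cdot\nabla\partial_1q\,\partial_t\partial_1q\,\mathrm dx$ coming from $\mathcal N^{1,\alpha}$ would need $u_t\in L^\infty$.
\end{itemize}
So that level cannot be closed with right-hand side $\sqrt{\mathcal E}\mathcal D$. The repair is to drop it, i.e.\ to read $\bar{\mathcal E}$ and $\bar{\mathcal D}$ with time derivatives carrying no horizontal derivatives. This is also what the paper needs later: $\mathcal D_2\sim\mathcal D$ requires $\bar{\mathcal D}\lesssim\mathcal D$, and $\mathcal D$ contains no $\|\partial_t\partial_{\mathrm h}u\|_1$.

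\textbf{The $\alpha=0$ transport remainder.} The inequality $\|q\|_{L^6}\lesssim\|\nabla q\|_0$ is false on $\mathbb R^2\times(0,1)$. Take $q=\phi(\lambda x_{\mathrm h})$ and let $\lambda\to0$: then $\|q\|_{L^6}=\lambda^{-1/3}\|\phi\|_{L^6(\mathbb R^2)}$ while $\|\nabla q\|_0$ stays fixed. Moreover, $\|q\|_0$ is not part of $\mathcal D$. Integrate by parts first, using $u|_{\partial\Omega}=0$:
\[
\tfrac12\int\mathrm{div}u\,|q|^2\mathrm dx=-\int q\,u\cdot\nabla q\,\mathrm dx\lesssim\|q\|_2\|u\|_0\|\nabla q\|_0 .
\]
This is exactly \eqref{2026e1538nmnm}, the same device you already use for $\mathcal N^{1,0}$.

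\textbf{A cosmetic point.} For the $\partial_{\mathrm h}^2\nabla\mathcal N_P$ term you only need $\|\partial_{\mathrm h}^2\mathcal N_P\|_0\leqslant\|\nabla\mathcal N_P\|_1\lesssim\|q\|_2\|\nabla q\|_1$. The full norm $\|\mathcal N_P\|_2$ contains $\|\mathcal N_P\|_0\sim\|q^2\|_0$, which is not bounded that way.

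With these corrections your proof coincides with the paper's.
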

\begin{pf}
Let $\alpha\in\mathbb{N}^{1+2}$ satisfy $|\alpha|\leqslant2$.
Taking the inner product of \eqref{20260412ne}$_2$ with $\partial^{\alpha}u$  in $L^2$,
integrating by parts over $\Omega$, and then using the boundary condition \eqref{20260412ne}$_4$, we have
\begin{align}
&\label{202312271537}
\frac{1}{2}\frac{\mathrm{d}}{\mathrm{d}t}\int(\bar{\rho}+q)|\partial^{\alpha}u|^2\mathrm{d}x
+(\mu+\chi)\int|\nabla \partial^{\alpha}u|^2\mathrm{d}x+(\mu+\mu'-\chi)\int|\mathrm{div}\partial^{\alpha}u|^2 \mathrm{d}x\nonumber\\[1.5mm]
&=\int P'(\bar{\rho})\partial^{\alpha}q\mathrm{div} \partial^{\alpha}u\mathrm{d}x
+2\chi\int\nabla \times \partial^{\alpha}w\cdot \partial^{\alpha}u\mathrm{d}x
+\int(\mathcal{N}^{2,\alpha}+\mathcal{N}^{3,\alpha})\cdot \partial^{\alpha}u \mathrm{d}x\nonumber\\[1mm]
&\quad+\frac{1}{2}\int \big((\bar{\rho}+q)_{t}+\mathrm{div}((\bar{\rho}+q) u)\big)|\partial^{\alpha}u|^2\mathrm{d}x.
\end{align}
By virtue of \eqref{0101n}$_1$, we obtain
\begin{align*}%\label{202604091147nm}
\frac{1}{2}\int \big((\bar{\rho}+q)_{t}+\mathrm{div}((\bar{\rho}+q) u)\big)|\partial^{\alpha}u|^2\mathrm{d}x=0.
\end{align*}
Thanks to \eqref{20260412ne}$_1$ and  \eqref{20260412ne}$_4$, we deduce that
\begin{align*}
%&\label{202312271124}
&\int P'(\bar{\rho})\partial^{\alpha}q\mathrm{div}\partial^{\alpha}u\mathrm{d}x\\[1mm]
&
=-\frac{1}{2}\frac{\mathrm{d}}{\mathrm{d}t}\int P'(\bar{\rho})\bar{\rho}^{-1}|\partial^{\alpha}q|^2\mathrm{d}x
+\frac{1}{2}\int P'(\bar{\rho})\bar{\rho}^{-1}\mathrm{div}u|\partial^{\alpha}q|^2\mathrm{d}x
+\int P'(\bar{\rho})\bar{\rho}^{-1}\mathcal{N}^{1,\alpha}\partial^{\alpha}q\mathrm{d}x.
\end{align*}
Substituting the above two identities into \eqref{202312271537} yields
\begin{align}\label{2026a1537}
\frac{1}{2}&\frac{\mathrm{d}}{\mathrm{d}t}\left(\int(\bar{\rho}+q)|\partial^{\alpha}u|^2\mathrm{d}x
+\int P'(\bar{\rho})\bar{\rho}^{-1}|\partial^{\alpha}q|^2\mathrm{d}x\right)\nonumber\\[1.5mm]
&+(\mu+\chi)\int|\nabla \partial^{\alpha}u|^2\mathrm{d}x+(\mu+\mu'-\chi)\int|\mathrm{div}\partial^{\alpha}u|^2 \mathrm{d}x
-2\chi\int\nabla \times \partial^{\alpha}w\cdot \partial^{\alpha}u\mathrm{d}x\nonumber\\[1.5mm]
=&\int P'(\bar{\rho})\bar{\rho}^{-1}\mathcal{N}^{1,\alpha}\partial^{\alpha}q\mathrm{d}x
+\int(\mathcal{N}^{2,\alpha}+\mathcal{N}^{3,\alpha})\cdot \partial^{\alpha}u \mathrm{d}x
+\frac{1}{2}\int P'(\bar{\rho})\bar{\rho}^{-1}\mathrm{div}u|\partial^{\alpha}q|^2\mathrm{d}x.
\end{align}

In analogy with the derivation of \eqref{2026a1537}, taking the inner product of \eqref{20260412ne}$_3$ with $\partial^{\alpha}w$  in $L^2$,
integrating by parts over $\Omega$, and invoking \eqref{0101n}$_1$ and \eqref{20260412ne}$_4$ again, it follows that
\begin{align}\label{2026b1140}
&\frac{1}{2}\frac{\mathrm{d}}{\mathrm{d}t}\int(\bar{\rho}+q) |\partial^{\alpha}w|^2\mathrm{d}x
+4\chi\int|\partial^{\alpha}w|^2 \mathrm{d}x
-2\chi\int\nabla \times \partial^{\alpha}u\cdot \partial^{\alpha}w\mathrm{d}x
%\nonumber\\[1mm]&
=\int\mathcal{N}^{4,\alpha}\cdot  \partial^{\alpha}w \mathrm{d}x.
\end{align}
Adding \eqref{2026b1140} to \eqref{2026a1537} then gives rise to
\begin{align}\label{2026c1537}
\frac{1}{2}&\frac{\mathrm{d}}{\mathrm{d}t}\left(\int(\bar{\rho}+q)(|\partial^{\alpha}u|^2+|\partial^{\alpha}w|^2)\mathrm{d}x
+\int P'(\bar{\rho})\bar{\rho}^{-1}|\partial^{\alpha}q|^2\mathrm{d}x\right)\nonumber\\[1.5mm]
&+(\mu+\chi)\int|\nabla \partial^{\alpha}u|^2\mathrm{d}x+(\mu+\mu'-\chi)\int|\mathrm{div}\partial^{\alpha}u|^2 \mathrm{d}x
+4\chi\int|\partial^{\alpha}w|^2 \mathrm{d}x\nonumber\\[1.5mm]
&-2\chi\int\nabla \times \partial^{\alpha}w\cdot \partial^{\alpha}u\mathrm{d}x
-2\chi\int\nabla \times \partial^{\alpha}u\cdot \partial^{\alpha}w\mathrm{d}x\nonumber\\[1.5mm]
=&\int P'(\bar{\rho})\bar{\rho}^{-1}\mathcal{N}^{1,\alpha}\partial^{\alpha}q\mathrm{d}x
+\int\mathcal{N}^{2,\alpha}\cdot \partial^{\alpha}u \mathrm{d}x
+\int\mathcal{N}^{3,\alpha}\cdot \partial^{\alpha}u \mathrm{d}x
+\int\mathcal{N}^{4,\alpha}\cdot  \partial^{\alpha}w \mathrm{d}x
\nonumber\\[1.5mm]
&+\frac{1}{2}\int P'(\bar{\rho})\bar{\rho}^{-1}\mathrm{div}u|\partial^{\alpha}q|^2\mathrm{d}x:=\sum_{k=1}^{5}I_k.
\end{align}

By virtue of the boundary condition $\partial^{\alpha}u\big|_{\partial\Omega}=0$ and the following two vector identities
$$\nabla \times w\cdot u=\nabla \times u\cdot w+\mathrm{div}(w\times u),\quad\Delta u=\nabla\mathrm{div}u-\nabla\times\nabla\times u,$$
one can readily verify that
\begin{align}\label{2026g}
&\int\nabla \times \partial^{\alpha}w\cdot \partial^{\alpha}u\mathrm{d}x\nonumber\\[1mm]
&=\int\nabla \times \partial^{\alpha}u\cdot \partial^{\alpha}w\mathrm{d}x
+\int\mathrm{div}( \partial^{\alpha}w\times \partial^{\alpha}u)\mathrm{d}x
=\int\nabla \times \partial^{\alpha}u\cdot \partial^{\alpha}w\mathrm{d}x\quad
\end{align}
and
\begin{align}\label{2026h}
&\int|\nabla \times \partial^{\alpha}u-2 \partial^{\alpha}w|^2\mathrm{d}x\nonumber\\[1mm]
&=\int(|\nabla \partial^{\alpha}u|^2-|\mathrm{div}\partial^{\alpha}u|^2 +4|\partial^{\alpha}w|^2 )\mathrm{d}x
-4\int\nabla \times \partial^{\alpha}u\cdot \partial^{\alpha}w\mathrm{d}x.\qquad\quad
\end{align}
Invoking these two significant identities, we can then update \eqref{2026c1537} as
\begin{align}\label{2026k1537}
&\frac{1}{2}\frac{\mathrm{d}}{\mathrm{d}t}\left(\int(\bar{\rho}+q)(|\partial^{\alpha}u|^2+|\partial^{\alpha}w|^2)\mathrm{d}x
+\int P'(\bar{\rho})\bar{\rho}^{-1}|\partial^{\alpha}q|^2\mathrm{d}x\right)\nonumber\\[1.5mm]
&+\mu\int|\nabla \partial^{\alpha}u|^2\mathrm{d}x+(\mu+\mu')\int|\mathrm{div}\partial^{\alpha}u|^2 \mathrm{d}x
+\chi\int|\nabla \times \partial^{\alpha}u-2 \partial^{\alpha}w|^2\mathrm{d}x
\nonumber\\[1mm]&
=\sum_{k=1}^{5}I_k.
\end{align}

Furthermore, we note that $\|\nabla\times \partial^{\alpha}u\|_{0}\leqslant\|\nabla \partial^{\alpha}u\|_{0}$ due to $\partial^{\alpha}u\big|_{\partial\Omega}=0$, and that
\begin{align*}
\|\partial^{\alpha}w\|_{0}^2\lesssim\|\nabla \times \partial^{\alpha}u-2 \partial^{\alpha}w\|_{0}^2+\epsilon \|\nabla\times \partial^{\alpha}u\|_{0}\quad\mbox{holds for any}\;\epsilon>0.
\end{align*}
Consequently, there exists a suitably small constant $\lambda_1> 0$ such that
\begin{align}\label{2026l1537}
&\frac{1}{2}\frac{\mathrm{d}}{\mathrm{d}t}\left(\int(\bar{\rho}+q)(|\partial^{\alpha}u|^2+|\partial^{\alpha}w|^2)\mathrm{d}x
+\int P'(\bar{\rho})\bar{\rho}^{-1}|\partial^{\alpha}q|^2\mathrm{d}x\right)\nonumber\\[1mm]
&+\lambda_1\left(\int|\nabla \partial^{\alpha}u|^2\mathrm{d}x+\int|\partial^{\alpha}w|^2\mathrm{d}x\right)
\lesssim\sum_{k=1}^{5}I_k.
\end{align}

We now proceed to estimate $I_k$ in sequence.
First, observe that for any multi-index $\alpha\in\mathbb{N}^{1+2}$ with $|\alpha|\leqslant2$, the operator
$\partial^{\alpha}$ corresponds to either a time derivative $\partial_{t}$ or a horizontal spatial derivative
$\partial_{\mathrm{h}}^{i}\;(0\leqslant i\leqslant 2)$. By applying \eqref{12272120}, we easily obtain the bound:
\begin{align}\label{2026d1538}
I_1+I_2+I_4\lesssim \sqrt{\mathcal{E}}\mathcal{D}.
\end{align}

Next, we turn our attention to $I_3$. When $|\alpha|\leqslant1$ or $\alpha_0=1$, it follows from \eqref{12272120n} that
\begin{align}\label{2026f1538}
I_3\lesssim \sqrt{\mathcal{E}}\mathcal{D}.
\end{align}
For the case where $|\alpha|=2$ and $\alpha_0=0$, we first integrate by parts in $x_{\mathrm{h}}$ and then use \eqref{202604122002}--\eqref{202604122032} with $i=1$ to deduce that
\begin{align}\label{2026g1538}
I_3\lesssim\|\nabla q\|_{1}\|(u_{t}+u\cdot\nabla u)\|_{1}\|\partial_{\mathrm{h}}^2u\|_{1}
+\|\mathcal{N}^7\|_{1}\|\partial_{\mathrm{h}}^3u\|_{0}\lesssim \sqrt{\mathcal{E}}\mathcal{D}.
\end{align}

Finally, regarding $I_5$, if $|\alpha|=0$, we employ integration by parts to have
\begin{align}\label{2026e1538nmnm}
I_5=-\int P'(\bar{\rho})\bar{\rho}^{-1}qu\cdot\nabla q\mathrm{d}x
\lesssim \|q\|_{2}\|(u,\nabla q)\|_{0}^2\lesssim \sqrt{\mathcal{E}}\mathcal{D}.
\end{align}
In the remaining case where $|\alpha|\neq0$, an application of \eqref{embed2} yields
\begin{align}\label{2026e1538}
I_5\lesssim \|u\|_{3}\|\partial^{\alpha}q\|_{0}^2\lesssim \sqrt{\mathcal{E}}\mathcal{D}.
\end{align}

Defining
\begin{align}\label{2026gh1538}
\tilde{\bar{\mathcal{E}}}:=\sum_{|\alpha|\leqslant2}\left(\int(\bar{\rho}+q)(|\partial^{\alpha}u|^2+|\partial^{\alpha}w|^2)\mathrm{d}x
+\int P'(\bar{\rho})\bar{\rho}^{-1}|\partial^{\alpha}q|^2\mathrm{d}x\right).
\end{align}
In view of \eqref{202604121856nh}, we easily verify the equivalence between $\tilde{\bar{\mathcal{E}}}$ and $\bar{\mathcal{E}}$.
Thus, inserting \eqref{2026d1538}--\eqref{2026e1538} into \eqref{2026l1537}, summing over $\alpha$, and employing \eqref{friedrich} for $\partial^{\alpha}u$ together with \eqref{2026gh1538}, we arrive at the desired estimate \eqref{12281503nmnm}.
This completes the proof.
\hfill$\Box$
\end{pf}

%%%%%%%%%%%%%%%%%%%%%%%%%%%%%%%%%%%%%%%%%%%%%%%%%%%%%%%%%%%%%%%%%%%%%%%%%
\subsection{Normal estimates of $(q, u, w)$}

In this subsection, we aim to establish the estimates for the normal derivatives of $(q, u, w)$.
More precisely, we exploit the ODE structure of the linear perturbed model and the elliptic structure of the momentum equation to derive bounds on the normal derivatives of the solution. Subsequently, we apply the regularity theory of elliptic problem to obtain further estimates for $u$ in the energy.

{\bf{ Estimates for $q$.}}
In the spirit of \cite{MANTIC481}, we first define the quantity $Q$ by
$Q:=q_{t}+u\cdot\nabla q$.
From \eqref{0101nb}$_1$ we deduce that $Q$ satisfies
\begin{align}\label{202604101017}
\partial_{3}Q+\bar{\rho}\partial_{3}^2u_3=-\bar{\rho}\partial_{3}\mathrm{div}_{\mathrm{h}}u_{\mathrm{h}}-q\mathrm{div}u.
\end{align}
Combining \eqref{202604101017} with the third component of  \eqref{0101nb}$_2$ to eliminate $\partial_{3}^2u_{3}$ yields
\begin{align}\label{12281700}
\bar{\rho}^{-1}(2\mu+\mu')\partial_{3}Q+{P}'(\bar{\rho})\partial_{3}q=\mathcal{L}+\mathfrak{N},
\end{align}
where
\begin{align*}
&\mathcal{L}:=-\bar{\rho}\partial_{t}u_{3}-(\mu+\chi)\partial_{3}\mathrm{div}_{\mathrm{h}}u_{\mathrm{h}}+(\mu+\chi)\Delta_{\mathrm{h}}u_3+2\chi (\nabla\times w)_{3},\\[1.5mm]
&\mathfrak{N}:=\big(\mathcal{N}^{6}+\mathcal{N}^7\big)_{3}-\bar{\rho}^{-1}(2\mu+\mu')q\mathrm{div}u.
\end{align*}
We are now in a position to derive the following estimates for $\partial_3q$.
\begin{lem}\label{lem:dfifessimM}
Under assumption \eqref{aprpioses} with sufficiently small $\delta$, it holds that
\begin{align}\label{202112152140}
&\frac{\mathrm{d}}{\mathrm{d}t}\|\partial_{3}^{2-i}q\|_{\underline{i},0}^2
+c\|\partial_{3}^{2-i}q\|_{\underline{i},0}^2\nonumber\\[1mm]
&\lesssim\bar{\mathcal{D}}
+\left\|(\partial_3^{2-i}\partial_{\mathrm{h}}u,\partial_3^{1-i}(\nabla\times w)_3)\right\|_{\underline{i},0}^2+\sqrt{\mathcal{E}}\mathcal{D}\quad\;\;\mathrm{for}\;\;0\leqslant i\leqslant 1.
\end{align}
\end{lem}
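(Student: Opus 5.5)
We will treat \eqref{12281700} as a damped transport equation for $\partial_3 q$. Since $\partial_3 Q=\partial_3 q_t+u\cdot\nabla\partial_3 q+\partial_3u\cdot\nabla q$, the identity \eqref{12281700} can be rewritten as
\begin{align*}
\bar{\rho}^{-1}(2\mu+\mu')\big(\partial_t\partial_3 q+u\cdot\nabla\partial_3 q\big)+P'(\bar{\rho})\partial_3 q=\mathcal{L}+\mathfrak{N}-\bar{\rho}^{-1}(2\mu+\mu')\partial_3u\cdot\nabla q.
\end{align*}
The plan is to apply $\partial_{\mathrm{h}}^{\beta}\partial_3^{1-i}$ with $|\beta|\leqslant i$ and $0\leqslant i\leqslant1$. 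This covers the derivatives appearing in $\|\partial_3^{2-i}q\|_{\underline{i},0}$. We then test the result with $\partial_{\mathrm{h}}^{\beta}\partial_3^{2-i}q$ in $L^2$. No integration by parts in $x_3$ is needed, because we only multiply pointwise. The time derivative produces $\frac{1}{2}\bar{\rho}^{-1}(2\mu+\mu')\frac{\mathrm{d}}{\mathrm{d}t}\|\partial_{\mathrm{h}}^{\beta}\partial_3^{2-i}q\|_0^2$. The damping term gives $P'(\bar{\rho})\|\partial_{\mathrm{h}}^{\beta}\partial_3^{2-i}q\|_0^2$, which is a positive dissipation since $P'>0$.

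The transport term $u\cdot\nabla\partial_{\mathrm{h}}^{\beta}\partial_3^{2-i}q$ will be handled by integrating by parts. Because $u\cdot n=0$ on $\partial\Omega$, there is no boundary term, and we are left with $\frac12\int\mathrm{div}u\,|\partial_{\mathrm{h}}^{\beta}\partial_3^{2-i}q|^2\lesssim\|u\|_3\|\nabla q\|_1^2\lesssim\sqrt{\mathcal{E}}\mathcal{D}$. The commutator $[\partial_{\mathrm{h}}^{\beta}\partial_3^{1-i},u\cdot\nabla]\partial_3q$ and the term $\partial_{\mathrm{h}}^{\beta}\partial_3^{1-i}(\partial_3u\cdot\nabla q)$ involve at most two derivatives of $q$ paired with derivatives of $u$. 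By the product estimate \eqref{product} and the embedding \eqref{embed2}, their $L^2$ norms are bounded by $\|u\|_3\|\nabla q\|_1\lesssim\sqrt{\mathcal{E}\mathcal{D}}$. The nonlinear part $\mathfrak{N}$ is covered by \eqref{1227213738} with $i=1$, together with the bound $\|q\operatorname{div}u\|_1\lesssim\|q\|_2\|u\|_2$. This gives $\|\mathfrak{N}\|_1\lesssim\sqrt{\mathcal{E}\mathcal{D}}$, so its contribution after Cauchy--Schwarz is $\sqrt{\mathcal{E}}\mathcal{D}$.

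For the linear source $\mathcal{L}$ we use Young's inequality $ab\leqslant\epsilon a^2+C_\epsilon b^2$ to absorb half of the damping. It then suffices to bound $\|\partial_{\mathrm{h}}^{\beta}\partial_3^{1-i}\mathcal{L}\|_0^2$ term by term:
\begin{itemize}
\item The term $\bar{\rho}\partial_tu_3$ contributes $\|\partial_3^{1-i}\partial_{\mathrm{h}}^{\beta}\partial_tu\|_0^2\leqslant\|\partial_tu\|_{\underline{1},1}^2$ when $i=0$, and $\|\partial_tu\|_{\underline1,0}^2$ when $i=1$. Both are contained in $\bar{\mathcal{D}}$, since $\bar{\mathcal{D}}\supset\|\partial_tu\|_{\underline{1},1}^2$.
\item The terms $\partial_3\mathrm{div}_{\mathrm{h}}u_{\mathrm{h}}$ and $\Delta_{\mathrm{h}}u_3$ give $\|\partial_3^{2-i}\partial_{\mathrm{h}}u\|_{\underline{i},0}^2$ together with $\|\partial_3^{1-i}\partial_{\mathrm{h}}^2u\|_{\underline{i},0}$. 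The first belongs to the right-hand side of \eqref{202112152140}. The second lies in $\|u\|_{\underline{2},1}^2\subset\bar{\mathcal{D}}$.
\item The term $(\nabla\times w)_3$ gives $\|\partial_3^{1-i}(\nabla\times w)_3\|_{\underline{i},0}^2$, which is kept as written in \eqref{202112152140}.
\end{itemize}

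Summing over $|\beta|\leqslant i$ and dividing by the constant $\bar{\rho}^{-1}(2\mu+\mu')$ yields \eqref{202112152140}. The step that needs the most care is the derivative counting for the commutator terms when $i=0$. There the term $\partial_3(u\cdot\nabla\partial_3q)$ forces us to check that only $\nabla^2 q$, and never a third derivative of $q$, appears after the transport integration by parts. We would also verify that the boundary term vanishes using $u_3|_{\partial\Omega}=0$.
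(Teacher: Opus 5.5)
Your proposal is correct and follows essentially the paper's argument. The paper also treats \eqref{12281700} as a damped transport equation for $\partial_3 q$, with the same transport, commutator, $\mathfrak{N}$ and $\mathcal{L}$ bounds. The only difference is that it takes the squared $\|\partial_3^{1-i}\cdot\|_{\underline{i},0}$-norm of both sides and reads $\frac{\mathrm{d}}{\mathrm{d}t}\|\partial_3^{2-i}q\|_{\underline{i},0}^2$ off the cross term, which gives the extra dissipation $\|\partial_3^{2-i}Q\|_{\underline{i},0}^2$ for free, whereas you test against $\partial_{\mathrm{h}}^{\beta}\partial_3^{2-i}q$ and use Young's inequality. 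One small remark: differentiating $Q=-(\bar{\rho}+q)\mathrm{div}u$ actually puts $\partial_3(q\,\mathrm{div}u)$ rather than $q\,\mathrm{div}u$ into $\mathfrak{N}$, a slip you inherited from the paper. It is harmless, since $\|q\,\mathrm{div}u\|_2\lesssim\|q\|_2\|u\|_3\lesssim\sqrt{\mathcal{E}\mathcal{D}}$.
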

\begin{pf}
Taking the norm $\|\partial_3^{1-i}\cdot\|_{\underline{i},0}$ of both sides of \eqref{12281700}, we obtain
\begin{align}\label{202112152142}
\|\partial_3^{1-i}\big(\bar{\rho}^{-1}(2\mu+\mu')\partial_{3}Q+{P}'(\bar{\rho})\partial_{3}q\big)\|_{\underline{i},0}^2
=\|\partial_3^{1-i}\big(\mathcal{L}+\mathfrak{N}\big)\|_{\underline{i},0}^2.
\end{align}
Observing that the left-hand side can be expanded as
\begin{align}\label{202604101050}
&\|\partial_3^{1-i}\big(\bar{\rho}^{-1}(2\mu+\mu')\partial_{3}Q+{P}'(\bar{\rho})\partial_{3}q\big)\|_{\underline{i},0}^2\nonumber\\[2mm]
&=\|\bar{\rho}^{-1}(2\mu+\mu')\partial_{3}^{2-i}Q\|_{\underline{i},0}^2
+\|{P}'(\bar{\rho})\partial_{3}^{2-i}q\|_{\underline{i},0}^2
\nonumber\\[1.5mm]&\quad\;
+2{P}'(\bar{\rho})\bar{\rho}^{-1}(2\mu+\mu')\sum_{0\leqslant j\leqslant i}\int\partial_{\mathrm{h}}^{j}\partial_{3}^{2-j}Q\partial_{\mathrm{h}}^{j}\partial_{3}^{2-j}q\mathrm{d}x.\quad\quad\;\;
\end{align}
Recalling the quantity $Q:=q_{t}+u\cdot\nabla q$, we deduce that the cross term satisfies
\begin{align}\label{202604101052}
&2{P}'(\bar{\rho})\bar{\rho}^{-1}(2\mu+\mu')\sum_{0\leqslant j\leqslant i}
\int\partial_{\mathrm{h}}^{j}\partial_{3}^{2-j}Q\partial_{\mathrm{h}}^{j}\partial_{3}^{2-j}q\mathrm{d}x\nonumber\\[1mm]
&=2{P}'(\bar{\rho})\bar{\rho}^{-1}(2\mu+\mu')\sum_{0\leqslant j\leqslant  i}\int\partial_{\mathrm{h}}^{j}\partial_{3}^{2-j}(q_t+u\cdot\nabla q)
\partial_{\mathrm{h}}^{j}\partial_{3}^{2-j}q\mathrm{d}x\nonumber\\[1mm]
&={P}'(\bar{\rho})\bar{\rho}^{-1}(2\mu+\mu')\frac{\mathrm{d}}{\mathrm{d}t}\|\partial_{3}^{2-i}q\|_{\underline{i},0}^2
-{P}'(\bar{\rho})\bar{\rho}^{-1}(2\mu+\mu')\sum_{0\leqslant j\leqslant i}\int\mathrm{div}u|\partial_{\mathrm{h}}^{j}\partial_{3}^{2-j}q|^2\mathrm{d}x\nonumber\\[1mm]
&\quad
+ 2{P}'(\bar{\rho})\bar{\rho}^{-1}(2\mu+\mu')\sum_{0\leqslant j\leqslant i}\int[\partial_{\mathrm{h}}^{j}\partial_{3}^{2-j},u]\cdot\nabla q
\partial_{\mathrm{h}}^{j}\partial_{3}^{2-j}q\mathrm{d}x,
\end{align}
where we have used the fact that
$$\int u\cdot\nabla|\partial_{\mathrm{h}}^{j}\partial_{3}^{2-j}q|^2\mathrm{d}x
=-\int\mathrm{div}u|\partial_{\mathrm{h}}^{j}\partial_{3}^{2-j}q|^2\mathrm{d}x.$$
Combining \eqref{202112152142}--\eqref{202604101052} then gives rise to
\begin{align}\label{202112152250}
&2{P}'(\bar{\rho})\bar{\rho}^{-1}(2\mu+\mu')\frac{\mathrm{d}}{\mathrm{d}t}\|\partial_{3}^{2-i}q\|_{\underline{i},0}^2
+\|{P}'(\bar{\rho})\partial_{3}^{2-i}q\|_{\underline{i},0}^2
+\|\bar{\rho}^{-1}(2\mu+\mu')\partial_{3}^{2-i}Q\|_{\underline{i},0}^2\nonumber\\[1mm]
&\lesssim\|\partial_3^{1-i}\big(\mathcal{L}+\mathfrak{N}\big)\|_{\underline{i},0}^2+\|u\|_{3}\|\partial_{3}q\|_{1}^2.
\end{align}

Furthermore, invoking \eqref{aprpioses} and \eqref{1227213738}, we estimate the right-hand side as
\begin{align*}
\|\partial_3^{1-i}\big(\mathcal{L}+\mathfrak{N}\big)\|_{\underline{i},0}^2
&\lesssim\|\partial_3^{1-i}\mathcal{L}\|_{\underline{i},0}^2+\|(\mathcal{N}^6,\mathcal{N}^7)\|_{1}^2+\|q\mathrm{div}u\|_{1}^2\nonumber\\[1mm]
&\lesssim\|\nabla\partial_3^{1-i}\partial_{\mathrm{h}}u\|_{\underline{i},0}^2+\|\partial_3^{1-i}(\nabla\times w)_3\|_{\underline{i},0}^2
+\|u_{t}\|_{1}^2+\sqrt{\mathcal{E}}\mathcal{D}.
\end{align*}
Substituting this bound into \eqref{202112152250} yields \eqref{202112152140}. This completes the proof.
\hfill$\Box$
\end{pf}

{\bf{ Estimates for $w$.}}
Following the approach of \cite{FengSCM}, we first denote
$$W=(W_1,W_2,W_3)^{\top}:=\nabla\times w.$$
Applying the curl operator to \eqref{0101n}$_3$ and using the identity $\Delta u= \nabla\mathrm{div}u-\nabla\times\nabla\times u$, we obtain
\begin{align}\label{202604101338}
\bar{\rho}\big(W_{t}+u\cdot\nabla W \big)+4\chi W=2\chi(\nabla\mathrm{div}u-\Delta u)+\mathcal{N}^9,
\end{align}
where $\mathcal{N}^9$ is defined in \eqref{20260412a}.

In view of \eqref{0101nb}$_2$ and \eqref{202604101338}, we observe that $(\partial_3^2 u_{\mathrm{h}},W_{\mathrm{h}})$ satisfies the system
\begin{equation}\label{2026195637}
\begin{cases}
(\mu + \chi)\partial_{3}^2u_{\mathrm{h}}+2\chi W_{\mathrm{h}}\\
\;=\bar{\rho}\partial_tu_{\mathrm{h}} + P'(\bar{\rho})\nabla_{\mathrm{h}} q
- (\mu + \chi) \Delta_{\mathrm{h}} u_{\mathrm{h}}-(\mu+\mu'- \chi)\nabla_{\mathrm{h}}\mathrm{div} u
-(\mathcal{N}^6+\mathcal{N}^7)_{\mathrm{h}}:=\tilde{\mathcal{H}}_{\mathrm{h}}
 &\mbox{in } \Omega ,\\[1mm]
\bar{\rho}\big(\partial_{t}W_{\mathrm{h}}+u\cdot\nabla W_{\mathrm{h}} \big)+4\chi W_{\mathrm{h}}+2\chi\partial_{3}^2u_{\mathrm{h}}
=2\chi(\nabla_{\mathrm{h}}\mathrm{div}u-\Delta_{\mathrm{h}} u_{\mathrm{h}})+\mathcal{N}^9_{\mathrm{h}}:=\tilde{\mathcal{G}}_{\mathrm{h}} &\mbox{in } \Omega,
\end{cases}
\end{equation}
while $(\partial_3^2 u_{3},W_{3})$ satisfies
\begin{equation}\label{2026195638}
\begin{cases}
(2\mu + \mu')\partial_{3}^2u_{3}+2\chi W_{3}\\
\;=\bar{\rho}\partial_tu_{3}  +  P'(\bar{\rho})\partial_{3} q
- (\mu + \chi) \Delta_{\mathrm{h}} u_{3}-(\mu+\mu'- \chi)\partial_3\mathrm{div}_{\mathrm{h}}u_{\mathrm{h}} - (\mathcal{N}^6+\mathcal{N}^7)_{3}:=\tilde{\mathcal{H}}_{3}
 &\mbox{in } \Omega ,\\[1mm]
\bar{\rho}\big(\partial_{t}W_{3}+u\cdot\nabla W_{3} \big)+4\chi W_{3}
=2\chi(\partial_3\mathrm{div}_{\mathrm{h}}u_{\mathrm{h}}-\Delta_{\mathrm{h}} u_{3})+\mathcal{N}^9_{3}:=\tilde{\mathcal{G}}_{3} &\mbox{in } \Omega.
\end{cases}
\end{equation}
Based on these systems, we are now ready to derive the estimates for $W$ and $\partial_3^2u$.
\begin{lem}\label{uwnormal}
Under assumption \eqref{aprpioses} with sufficiently small $\delta$, it holds that, %for $0\leqslant i\leqslant 1$,
\begin{align}\label{202604102028}
&\frac{\mathrm{d}}{\mathrm{d}t}\overline{\|\partial_{3}^{1-i}W\|}_{\underline{i},0}^2
+c\big(\|\partial_{3}^{1-i}W\|_{\underline{i},0}^2+\|\partial_{3}^{3-i}u\|_{\underline{i},0}^2\big)\nonumber\\[1mm]
&\lesssim \bar{\mathcal{D}}+  \|(\partial_{3}^{2-i}q,\partial_{3}^{2-i}\partial_{\mathrm{h}} u)\|_{\underline{i},0}^2
+(1-i)\|W\|_{1,0}^2
+\|\nabla q\|_{1}\|u\|_{3}^{1/2}\|u\|_{\underline{2},1}^{1/2}+\sqrt{\mathcal{E}}\mathcal{D},
\end{align}
where $\overline{\|\partial_{3}^{1-i}W\|}_{\underline{i},0}^2$ is equivalent to $\|\partial_{3}^{1-i}W\|_{\underline{i},0}^2$ and $0\leqslant i\leqslant 1$.
\end{lem}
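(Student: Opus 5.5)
The plan is to treat the two ODE-type systems \eqref{2026195637} and \eqref{2026195638} as coupled pairs $(\partial_3^2u_{\mathrm{h}},W_{\mathrm{h}})$ and $(\partial_3^2u_3,W_3)$, and to apply $\partial^{\beta}:=\partial_{\mathrm{h}}^{j}\partial_3^{1-i}$ with $0\leqslant j\leqslant i\leqslant1$ to both equations.

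For the horizontal pair, we take the $L^2$ inner product of $\partial^{\beta}$ of the second equation with $\partial^\beta W_{\mathrm{h}}$. We also take $\partial^\beta$ of the first equation, multiplied by a constant, with $\partial^\beta W_{\mathrm{h}}$, and use it to replace the cross term $2\chi\int\partial^\beta\partial_3^2u_{\mathrm{h}}\cdot\partial^\beta W_{\mathrm{h}}$. Concretely, the first equation gives $\partial_3^2u_{\mathrm{h}}=(\mu+\chi)^{-1}(\tilde{\mathcal{H}}_{\mathrm{h}}-2\chi W_{\mathrm{h}})$. Substituting into the second equation yields
$$\bar\rho(\partial_tW_{\mathrm{h}}+u\cdot\nabla W_{\mathrm{h}})+\Big(4\chi-\frac{4\chi^2}{\mu+\chi}\Big)W_{\mathrm{h}}=\tilde{\mathcal{G}}_{\mathrm{h}}-\frac{2\chi}{\mu+\chi}\tilde{\mathcal{H}}_{\mathrm{h}},$$
and the damping coefficient $4\chi\mu/(\mu+\chi)>0$ is positive precisely because $\mu>0$. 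In the same way, for the third components, $W_3$ already satisfies a damped transport equation.

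Testing with $\partial^\beta W$ gives $\frac{\mathrm{d}}{\mathrm{d}t}\|\partial^\beta W\|_0^2+c\|\partial^\beta W\|_0^2\lesssim$ (RHS terms paired with $\partial^\beta W$). The transport term is handled by integration by parts with $\mathrm{div}u$ and commutator bounds from \eqref{12281530}, using the modified energy $\overline{\|\cdot\|}$ weighted by $\bar\rho$. The dissipation of $\|\partial_3^{3-i}u\|_{\underline{i},0}$ is then recovered algebraically from the first equations: $\|\partial^\beta\partial_3^2u\|_0\lesssim\|\partial^\beta W\|_0+\|\partial^\beta\tilde{\mathcal{H}}\|_0$.

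For the right-hand sides, the linear parts of $\tilde{\mathcal{G}}$ and $\tilde{\mathcal{H}}$ consist of $\partial_tu$, $\nabla\partial_{\mathrm{h}}u$, $\partial_3\mathrm{div}_{\mathrm{h}}u_{\mathrm{h}}$ and $\nabla q$. After applying $\partial^\beta$ these are bounded by $\bar{\mathcal{D}}$ and $\|\partial_3^{2-i}\partial_{\mathrm{h}}u\|_{\underline{i},0}$, plus $\partial_3^{2-i}q$ in the vertical case. The term $(1-i)\|W\|_{1,0}^2$ appears because, for $i=0$, $\partial_3$ of $\tilde{\mathcal G}$ involves only horizontal-type quantities, but controlling $\partial_3 W$ via Young's inequality leaves lower-order horizontal $W$. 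The nonlinear terms $\mathcal{N}^6,\mathcal{N}^7,\mathcal{N}^9$ are handled by Lemma \ref{201806291049} and give $\sqrt{\mathcal E}\mathcal D$.

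The main obstacle is the horizontal pressure term $P'(\bar\rho)\partial^\beta\nabla_{\mathrm{h}}q$ inside $\tilde{\mathcal{H}}_{\mathrm{h}}$, since we have no dissipation for $\nabla_{\mathrm h}q$ on its own. Instead of putting it in the $W$-equation, we estimate $\partial_3^{3-i}u_{\mathrm h}$ by testing the first equation against $\partial^\beta\partial_3^2u_{\mathrm{h}}$. This produces $\int\partial^\beta\nabla_{\mathrm{h}}q\cdot\partial^\beta\partial_3^2u_{\mathrm{h}}$, which we treat case by case.

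For $i=0$, we integrate by parts horizontally to get $\int\partial_3 q\,\partial_{\mathrm h}\partial_3^3u$. This is bounded by $\|\partial_3^2 q\|$ times tangential norms after moving one $\partial_3$, or absorbed via Young's inequality against $\|\partial_3^2 q\|_0$.

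For $i=1$, $j=0$, the same reasoning applies with fewer derivatives.

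For $i=j=1$, we integrate by parts in $x_{\mathrm h}$ and in $x_3$. The boundary integral $\int_{\partial\Omega}\partial_{\mathrm h}^2q\,\partial_{\mathrm h}\partial_3u_{\mathrm h}\,\mathrm{d}x_{\mathrm h}$ is controlled by the refined trace estimate, giving $\|\nabla q\|_1\|u\|_3^{1/2}\|u\|_{\underline2,1}^{1/2}$. The interior term moves to $\partial_{\mathrm h}\partial_3$ acting on $u$, which is tangential.

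Combining the $W$-estimate with a small multiple of this $u$-estimate closes \eqref{202604102028}.
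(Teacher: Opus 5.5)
\textbf{There is a genuine gap: your elimination step puts the pressure into the $W_{\mathrm h}$-equation, and the resulting term is never estimated.} Writing $\partial_3^2u_{\mathrm h}=(\mu+\chi)^{-1}(\tilde{\mathcal H}_{\mathrm h}-2\chi W_{\mathrm h})$ and inserting it into the $W_{\mathrm h}$-equation moves all of $\tilde{\mathcal H}_{\mathrm h}$, including $P'(\bar\rho)\nabla_{\mathrm h}q$, to the right-hand side of your damped transport equation. This contradicts your later ``instead of putting it in the $W$-equation''. Testing with $\partial_{\mathrm h}^j\partial_3^{1-i}W_{\mathrm h}$ therefore produces $\frac{2\chi P'(\bar\rho)}{\mu+\chi}\int\partial_{\mathrm h}^j\partial_3^{1-i}\nabla_{\mathrm h}q\cdot\partial_{\mathrm h}^j\partial_3^{1-i}W_{\mathrm h}\,\mathrm{d}x$.

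For $i=j=1$ this is $\int\partial_{\mathrm h}\nabla_{\mathrm h}q\cdot\partial_{\mathrm h}W_{\mathrm h}\,\mathrm{d}x$. Cauchy--Schwarz would need $\|\partial_{\mathrm h}\nabla_{\mathrm h}q\|_0^2$, which is not an admissible right-hand term: there is no dissipation for horizontal derivatives of $q$, and only products with a tangential factor, such as $\|\nabla q\|_1\|u\|_3^{1/2}\|u\|_{\underline{2},1}^{1/2}$, can be afforded. Your proposal gives no other mechanism, and $W$, unlike $u$, has no boundary condition to exploit.

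The same defect affects your ``algebraic recovery'' $\|\partial_{\mathrm h}^j\partial_3^{3-i}u\|_0\lesssim\|\partial_{\mathrm h}^j\partial_3^{1-i}W\|_0+\|\partial_{\mathrm h}^j\partial_3^{1-i}\tilde{\mathcal H}\|_0$, because the last norm contains $\|\partial_{\mathrm h}^j\partial_3^{1-i}\nabla_{\mathrm h}q\|_0$.

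The paper does not substitute. It tests \eqref{2026195637}$_1$ with $a:=\partial_{\mathrm h}^j\partial_3^{3-i}u_{\mathrm h}$ and \eqref{2026195637}$_2$ with $b:=\partial_{\mathrm h}^j\partial_3^{1-i}W_{\mathrm h}$, and adds the two identities. The quadratic part is then $(\mu+\chi)|a|^2+4\chi a\cdot b+4\chi|b|^2=\mu|a|^2+\chi|a+2b|^2$, which is positive because $\mu>0$. The pressure is paired only with $a$, where $u|_{\partial\Omega}=0$, interpolation and the trace estimate \eqref{37190928} are available.

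A \emph{small} multiple of the $u$-identity would not work with the unsubstituted $W$-identity. With weight $\kappa$, the form $4\chi|b|^2+2\chi(1+\kappa)a\cdot b+\kappa(\mu+\chi)|a|^2$ is positive only if $\chi(1+\kappa)^2<4\kappa(\mu+\chi)$. This fails as $\kappa\to0$, so the weights must be comparable (for instance $\kappa=1$).

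\textbf{Your $i=0$ case of the pressure integral is also incomplete.} Integrating $\int\partial_3\nabla_{\mathrm h}q\cdot\partial_3^3u_{\mathrm h}\,\mathrm{d}x$ by parts horizontally creates $\partial_{\mathrm h}\partial_3^3u$, a fourth-order derivative not controlled by $\mathcal D$. Moving one $\partial_3$ back then leaves the boundary integral $\int_{\partial\Omega}\partial_3q\,\partial_{\mathrm h}\partial_3^2u\,\vec{n}_3\,\mathrm{d}x_{\mathrm h}$, which you do not address. Similarly, for $i=1$, $j=0$, horizontal integration by parts puts undifferentiated $q$ into the integrand, and $q$ has no dissipation.

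The paper is more direct here:
\begin{itemize}
\item For $i=0$ it bounds the integral by $\|\partial_3\partial_{\mathrm h}q\|_0\|\partial_3^3u_{\mathrm h}\|_0$, and controls $\|\partial_3\partial_{\mathrm h}q\|_0$ through the vertical momentum equation \eqref{2026195638}$_1$ (which contains $P'(\bar\rho)\partial_3q$), giving \eqref{202604151253}. This is the real origin of the $(1-i)\|W\|_{1,0}^2$ term in the statement, not a Young's-inequality remainder for $\partial_3W$.
\item For $i=1$, $j=0$ it uses plain Cauchy--Schwarz together with $\|\nabla u\|_1\lesssim\|\nabla u\|_0^{1/2}\|\nabla u\|_2^{1/2}$.
\end{itemize}

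Your treatment of the vertical pair ($W_3$ already damped, and $\partial_3^2u_3$ recovered algebraically with $\partial_3^{2-i}q$ on the right) and of the $i=j=1$ pressure integral via the trace estimate is consistent with the paper.
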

\begin{pf}
Let $0\leqslant j\leqslant i\leqslant1$.
Applying $\partial_{\mathrm{h}}^{j}\partial_{3}^{1-i}$ to \eqref{2026195638}$_1$ and \eqref{2026195638}$_2$,
and taking the inner product of the resulting identities with $\partial_{\mathrm{h}}^{j}\partial_{3}^{3-i}u_3$ and $\partial_{\mathrm{h}}^{j}\partial_{3}^{1-i} W_3$ in $L^2$, respectively, we have
\begin{align}\label{2092650411330n}
(2\mu + \mu')\int|\partial_{\mathrm{h}}^{j}\partial_{3}^{3-i}u_{3}|^2\mathrm{d}x
+2\chi \int \partial_{\mathrm{h}}^{j}\partial_{3}^{1-i}W_{3}\partial_{\mathrm{h}}^{j}\partial_{3}^{3-i}u_3\mathrm{d}x
=\int \partial_{\mathrm{h}}^{i}\partial_{3}^{j-i}\tilde{\mathcal{H}}_{3}\partial_{\mathrm{h}}^{j}\partial_{3}^{3-i}u_3\mathrm{d}x
\end{align}
and
\begin{align}\label{2092650411330}
&\frac{1}{2}\frac{\mathrm{d}}{\mathrm{d}t}\int\bar{\rho}|\partial_{\mathrm{h}}^{j}\partial_{3}^{1-i}W_{3}|^2\mathrm{d}x+4\chi \int |\partial_{\mathrm{h}}^{j}\partial_{3}^{1-i}W_{3}|^2\mathrm{d}x\nonumber\\[1mm]
&=\int \partial_{\mathrm{h}}^{j}\partial_{3}^{1-i}\tilde{\mathcal{G}}_{3}\partial_{\mathrm{h}}^{j}\partial_{3}^{1-i}W_3\mathrm{d}x
-\int\bar{\rho} \partial_{\mathrm{h}}^{j}\partial_{3}^{1-i}(u\cdot\nabla W_3)\partial_{\mathrm{h}}^{j}\partial_{3}^{1-i}W_3\mathrm{d}x.\qquad\qquad
\end{align}
In view of the boundary condition $u\big|_{\partial\Omega}=0$, we find that
\begin{align}\label{2092650411337}
&-\int \bar{\rho}\partial_{\mathrm{h}}^{j}\partial_{3}^{1-i}(u\cdot\nabla W_3)\partial_{\mathrm{h}}^{j}\partial_{3}^{1-i}W_3\mathrm{d}x\nonumber\\[1mm]
&=\frac{1}{2}\int\bar{\rho} \mathrm{div}u|\partial_{\mathrm{h}}^{j}\partial_{3}^{1-i}W_3|^2\mathrm{d}x
-\int\bar{\rho} [\partial_{\mathrm{h}}^{j}\partial_{3}^{1-i},u]\cdot\nabla W_3\partial_{\mathrm{h}}^{j}\partial_{3}^{1-i}W_3\mathrm{d}x.\qquad\qquad
\end{align}
Combining \eqref{2092650411330n}--\eqref{2092650411337}, we conclude that there exists a suitably small constant $\lambda_2> 0$, such that
\begin{align}\label{2092650411338n}
&\frac{1}{2}\frac{\mathrm{d}}{\mathrm{d}t}\int\bar{\rho}|\partial_{\mathrm{h}}^{j}\partial_{3}^{1-i}W_{3}|^2\mathrm{d}x
+\lambda_{2}\big(\|\partial_{\mathrm{h}}^{j}\partial_{3}^{1-i}W_{3}\|_{0}^2+\|\partial_{\mathrm{h}}^{j}\partial_{3}^{3-i}u_3\|_{0}^2\big)
\nonumber\\[1mm]
&\leqslant\bigg|\int \partial_{\mathrm{h}}^{j}\partial_{3}^{1-i}\tilde{\mathcal{H}}_{3}\partial_{\mathrm{h}}^{j}\partial_{3}^{3-i}u_3\mathrm{d}x
+\int \partial_{\mathrm{h}}^{j}\partial_{3}^{1-i}\tilde{\mathcal{G}}_{3}\partial_{\mathrm{h}}^{j}\partial_{3}^{1-i}W_3\mathrm{d}x
+\frac{1}{2}\int\bar{\rho} \mathrm{div}u|\partial_{\mathrm{h}}^{j}\partial_{3}^{1-i}W_3|^2\mathrm{d}x\nonumber\\[1mm]
&\quad-\int\bar{\rho} [\partial_{\mathrm{h}}^{j}\partial_{3}^{1-i},u]\cdot\nabla W_3\partial_{\mathrm{h}}^{j}\partial_{3}^{1-i}W_3\mathrm{d}x\bigg|.
\end{align}

We now turn to estimating the integral terms on the right-hand side of \eqref{2092650411338n}.
By virtue of H\"older's inequality and the product estimate \eqref{product}, we can estimate that
\begin{align*}
\bigg|\frac{1}{2}\int\bar{\rho} \mathrm{div}u|\partial_{\mathrm{h}}^{j}\partial_{3}^{1-i}W_3|^2\mathrm{d}x
-\int\bar{\rho} [\partial_{\mathrm{h}}^{j}\partial_{3}^{1-i},u]\cdot\nabla W_3\partial_{\mathrm{h}}^{j}\partial_{3}^{1-i}W_3\mathrm{d}x\bigg|\lesssim\sqrt{\mathcal{E}}\mathcal{D}.
\end{align*}
Furthermore, invoking H\"older's inequality along with \eqref{1227213738} and \eqref{12281530}, we deduce the bound
\begin{align*}
&\bigg|\int \partial_{\mathrm{h}}^{j}\partial_{3}^{1-i}\tilde{\mathcal{H}}_{3}\partial_{\mathrm{h}}^{j}\partial_{3}^{3-i}u_3\mathrm{d}x
+\int \partial_{\mathrm{h}}^{j}\partial_{3}^{1-i}\tilde{\mathcal{G}}_{3}\partial_{\mathrm{h}}^{j}\partial_{3}^{1-i}W_3\mathrm{d}x\bigg|\\[1mm]
&\lesssim\|\partial_{\mathrm{h}}^{j}\partial_{3}^{1-i}(u_{t},\partial_3q,\nabla\partial_{\mathrm{h}}u)\|_{0}\sqrt{\|\partial_{\mathrm{h}}^{j}\partial_{3}^{1-i}W_{3}\|_{0}^2
+\|\partial_{\mathrm{h}}^{j}\partial_{3}^{3-i}u_3\|_{0}^2}+\sqrt{\mathcal{E}}\mathcal{D}.\quad
\end{align*}
Substituting these estimates into \eqref{2092650411338n} and applying Young's inequality, we arrive at
\begin{align}\label{2092650411339n}
&\frac{1}{2}\frac{\mathrm{d}}{\mathrm{d}t}\int\bar{\rho}|\partial_{\mathrm{h}}^{j}\partial_{3}^{1-i}W_{3}|^2\mathrm{d}x
+\frac{\lambda_{1}}{2}\big(\|\partial_{\mathrm{h}}^{j}\partial_{3}^{1-i}W_{3}\|_{0}^2+\|\partial_{\mathrm{h}}^{j}\partial_{3}^{3-i}u_3\|_{0}^2\big)\qquad\qquad\quad\nonumber\\[1.5mm]
&\lesssim\|\partial_{\mathrm{h}}^{j}\partial_{3}^{1-i}(u_t,\partial_3q,\nabla\partial_{\mathrm{h}}u)\|_{0}^2+\sqrt{\mathcal{E}}\mathcal{D}.
\end{align}

Following the same arguments as for \eqref{2092650411339n}, we apply $\partial_{\mathrm{h}}^{j}\partial_{3}^{1-i}$ to \eqref{2026195637}$_1$ and \eqref{2026195637}$_2$,
take the inner product of the resulting identities with $\partial_{\mathrm{h}}^{j}\partial_{3}^{3-i}u_{\mathrm{h}}$ and $\partial_{\mathrm{h}}^{j}\partial_{3}^{1-i} W_{\mathrm{h}}$ in $L^2$, respectively, and sum the results to obtain
\begin{align}\label{2092650411338}
&\frac{1}{2}\frac{\mathrm{d}}{\mathrm{d}t}\int\bar{\rho}|\partial_{\mathrm{h}}^{j}\partial_{3}^{1-i}W_{\mathrm{h}}|^2\mathrm{d}x\nonumber\\[1mm]
&+(2\mu + \chi)\int|\partial_{\mathrm{h}}^{j}\partial_{3}^{3-i}u_{\mathrm{h}}|^2\mathrm{d}x
+4\chi \int \partial_{\mathrm{h}}^{j}\partial_{3}^{1-i}W_{\mathrm{h}}\cdot\partial_{\mathrm{h}}^{j}\partial_{3}^{3-i}u_{\mathrm{h}}\mathrm{d}x
+4\chi \int |\partial_{\mathrm{h}}^{j}\partial_{3}^{1-i}W_{\mathrm{h}}|^2\mathrm{d}x\nonumber\\[1mm]
&=\int \partial_{\mathrm{h}}^{j}\partial_{3}^{1-i}\tilde{\mathcal{H}}_{\mathrm{h}}\cdot\partial_{\mathrm{h}}^{j}\partial_{3}^{3-i}u_{\mathrm{h}}\mathrm{d}x
+\int \partial_{\mathrm{h}}^{j}\partial_{3}^{1-i}\tilde{\mathcal{G}}_{\mathrm{h}}\cdot\partial_{\mathrm{h}}^{j}\partial_{3}^{1-i}W_{\mathrm{h}}\mathrm{d}x
+\frac{1}{2}\int\bar{\rho} \mathrm{div}u|\partial_{\mathrm{h}}^{j}\partial_{3}^{1-i}W_{\mathrm{h}}|^2\mathrm{d}x\nonumber\\[1mm]
&\quad
-\int\bar{\rho} [\partial_{\mathrm{h}}^{j}\partial_{3}^{1-i},u]\cdot\nabla W_{\mathrm{h}}\cdot\partial_{\mathrm{h}}^{j}\partial_{3}^{1-i}W_{\mathrm{h}}\mathrm{d}x.
\end{align}
Observing the identity
\begin{align*}
&\|\partial_{\mathrm{h}}^{j}\partial_{3}^{3-i}u_{\mathrm{h}}+2 \partial_{\mathrm{h}}^{j}\partial_{3}^{1-i}W_{\mathrm{h}}\|_{0}^2\nonumber\\[1.5mm]
&=
\int|\partial_{\mathrm{h}}^{j}\partial_{3}^{3-i}u_{\mathrm{h}}|^2\mathrm{d}x+4 \int \partial_{\mathrm{h}}^{j}\partial_{3}^{1-i}W_{\mathrm{h}}\cdot\partial_{\mathrm{h}}^{j}\partial_{3}^{3-i}u_{\mathrm{h}}\mathrm{d}x
+4 \int |\partial_{\mathrm{h}}^{j}\partial_{3}^{1-i}W_{\mathrm{h}}|^2\mathrm{d}x
\end{align*}
and the inequality
\begin{align*}
\|\partial_{\mathrm{h}}^{j}\partial_{3}^{1-i}W_{\mathrm{h}}\|_{0}^2\lesssim\|\partial_{\mathrm{h}}^{j}\partial_{3}^{3-i}u_{\mathrm{h}}+2 \partial_{\mathrm{h}}^{j}\partial_{3}^{1-i}W_{\mathrm{h}}\|_{0}^2+\epsilon \|\partial_{\mathrm{h}}^{j}\partial_{3}^{3-i}u_{\mathrm{h}}\|_{0}^2\quad\mbox{for any}\;\epsilon>0.
\end{align*}
Then we can refine \eqref{2092650411338} to be
\begin{align}\label{20926504113339nm}
&\frac{1}{2}\frac{\mathrm{d}}{\mathrm{d}t}\int\bar{\rho}|\partial_{\mathrm{h}}^{j}\partial_{3}^{1-i}W_{\mathrm{h}}|^2\mathrm{d}x
+\lambda_{3}\big(\|\partial_{\mathrm{h}}^{j}\partial_{3}^{1-i}W_{\mathrm{h}}\|_{0}^2+\|\partial_{\mathrm{h}}^{j}\partial_{3}^{3-i}u_{\mathrm{h}}\|_{0}^2\big)
\nonumber\\[1mm]
&\leqslant\bigg|\int \partial_{\mathrm{h}}^{j}\partial_{3}^{1-i}\tilde{\mathcal{H}}_{\mathrm{h}}\cdot\partial_{\mathrm{h}}^{j}\partial_{3}^{3-i}u_{\mathrm{h}}\mathrm{d}x
+\int \partial_{\mathrm{h}}^{j}\partial_{3}^{1-i}\tilde{\mathcal{G}}_{\mathrm{h}}\cdot\partial_{\mathrm{h}}^{j}\partial_{3}^{1-i}W_{\mathrm{h}}\mathrm{d}x
+\frac{1}{2}\int\bar{\rho} \mathrm{div}u|\partial_{\mathrm{h}}^{j}\partial_{3}^{1-i}W_{\mathrm{h}}|^2\mathrm{d}x\nonumber\\[1mm]
&\quad
-\int\bar{\rho} [\partial_{\mathrm{h}}^{j}\partial_{3}^{1-i},u]\cdot\nabla W_{\mathrm{h}}\cdot\partial_{\mathrm{h}}^{j}\partial_{3}^{1-i}W_{\mathrm{h}}\mathrm{d}x\bigg|:=\Pi,
\end{align}
where $\lambda_3$ is a positive constant satisfying $\lambda_2/2>\lambda_3>0$.

Similarly to the derivation of \eqref{2092650411339n}, we have the bound
\begin{align}\label{2092650411539nm}
\Pi\lesssim\|\partial_{\mathrm{h}}^{j}\partial_{3}^{1-i}(u_{t},\nabla\partial_{\mathrm{h}}u)\|_{0}^2+\sqrt{\mathcal{E}}\mathcal{D}
+\bigg|\int \partial_{\mathrm{h}}^{j}\partial_{3}^{1-i}\nabla_{\mathrm{h}}q\cdot\partial_{\mathrm{h}}^{j}\partial_{3}^{3-i}u_{\mathrm{h}}\mathrm{d}x\bigg|.
\end{align}
{{Regarding this integral, in the case where $i=j=0$}}, we see that
\begin{align}\label{202604111539nh}
\bigg|\int \partial_{\mathrm{h}}^{j}\partial_{3}^{1-i}\nabla_{\mathrm{h}}q\cdot\partial_{\mathrm{h}}^{j}\partial_{3}^{3-i}u_{\mathrm{h}}\mathrm{d}x\bigg|
\lesssim\|\partial_{3}\nabla_{\mathrm{h}}q\|_{0}\|\partial_3^{3}u_{\mathrm{h}}\|_{0}.
\end{align}
{{For the case $i=1$ and $j=0$}}, by utilizing the interpolation inequality, we find that
\begin{align}\label{202604111539nm}
\bigg|\int \partial_{\mathrm{h}}^{j}\partial_{3}^{1-i}\nabla_{\mathrm{h}}q\cdot\partial_{\mathrm{h}}^{j}\partial_{3}^{3-i}u_{\mathrm{h}}\mathrm{d}x\bigg|
\lesssim\|\nabla_{\mathrm{h}}q\|_{0}\|\nabla u\|_{1}
\lesssim\|\nabla_{\mathrm{h}}q\|_{0}\|\nabla u\|_{2}^{1/2}\|\nabla u\|_{0}^{1/2}.
\end{align}
{{For the remaining case where $i=j=1$}},
an integration by parts over $\Omega$ yields
\begin{align}\label{2092650411539nnmm}
&\int \partial_{\mathrm{h}}^{j}\partial_{3}^{1-i}\nabla_{\mathrm{h}}q\cdot\partial_{\mathrm{h}}^{j}\partial_{3}^{3-i}u_{\mathrm{h}}\mathrm{d}x
\nonumber\\[1mm]
&=\int \partial_{\mathrm{h}}\partial_3q\partial_{\mathrm{h}}\partial_{3}\mathrm{div}_{\mathrm{h}}u_{\mathrm{h}}\mathrm{d}x
+\int_{\partial\Omega}\nabla_{\mathrm{h}}\partial_{\mathrm{h}}q\cdot\partial_{3}\partial_{\mathrm{h}}u_{\mathrm{h}}\vec{n}_{3}\mathrm{d}x_{\mathrm{h}},
\end{align}
where $\vec{n}_{3}$ denotes the third component of the unit outward normal vector $\vec{n}$ on $\partial\Omega$, namely, $\vec{n}_{3}=1$ on $\mathbb{R}^2\times\{1\}$ and $\vec{n}_{3}=-1$ on $\mathbb{R}^2\times\{0\}$.
Exploiting the dual estimate \eqref{11190840} and the trace estimate \eqref{37190928}, we have the following estimate
\begin{align}\label{20926504111737}
&\bigg|\int_{\partial\Omega}\nabla_{\mathrm{h}}\partial_{\mathrm{h}}q\cdot\partial_{3}\partial_{\mathrm{h}}u_{\mathrm{h}}\vec{n}_{3}\mathrm{d}x_{\mathrm{h}}\bigg|\nonumber\\[1mm]
&\lesssim
|\partial_{\mathrm{h}}q|_{H^{1/2}(\partial\Omega)}|\partial_{3}\partial_{\mathrm{h}}u_{\mathrm{h}}|_{H^{1/2}(\partial\Omega)}
\lesssim\|\partial_{\mathrm{h}}q\|_{1}\|\partial_{3}\partial_{\mathrm{h}}u_{\mathrm{h}}\|_{\underline{1},0}^{1/2}\|\partial_{3}\partial_{\mathrm{h}}u_{\mathrm{h}}\|_{1}^{1/2}.
\end{align}
Inserting \eqref{20926504111737} into \eqref{2092650411539nnmm} leads to
\begin{align}\label{20260411na}
&\bigg|\int \partial_{\mathrm{h}}^{j}\partial_{3}^{1-i}\nabla_{\mathrm{h}}q\cdot\partial_{\mathrm{h}}^{j}\partial_{3}^{3-i}u_{\mathrm{h}}\mathrm{d}x\bigg|\nonumber\\[1mm]
&\lesssim\|\partial_3\partial_{\mathrm{h}}q\|_{0}\|\partial_3\partial_{\mathrm{h}}^2u_{\mathrm{h}}\|_{0}
+\|\partial_{\mathrm{h}}q\|_{1}\|\partial_{3}\partial_{\mathrm{h}}u_{\mathrm{h}}\|_{\underline{1},0}^{1/2}\|\partial_{3}\partial_{\mathrm{h}}u_{\mathrm{h}}\|_{1}^{1/2}
\qquad\nonumber\\[1mm]
&
\lesssim\|\partial_{\mathrm{h}}q\|_{1}\|\partial_{3}\partial_{\mathrm{h}}u_{\mathrm{h}}\|_{\underline{1},0}^{1/2}\|\partial_{3}\partial_{\mathrm{h}}u_{\mathrm{h}}\|_{1}^{1/2}.
\end{align}
Therefore, collecting \eqref{20926504113339nm}--\eqref{202604111539nm} and \eqref{20260411na}, we deduce that
\begin{align}\label{20260411nc}
&\frac{1}{2}\frac{\mathrm{d}}{\mathrm{d}t}\int\bar{\rho}|\partial_{\mathrm{h}}^{j}\partial_{3}^{1-i}W_{\mathrm{h}}|^2\mathrm{d}x
+\lambda_{3}\big(\|\partial_{\mathrm{h}}^{j}\partial_{3}^{1-i}W_{\mathrm{h}}\|_{0}^2+\|\partial_{\mathrm{h}}^{j}\partial_{3}^{3-i}u_{\mathrm{h}}\|_{0}^2\big)\nonumber\\[1mm]
&\lesssim\|\partial_{\mathrm{h}}^{j}\partial_3^{1-i}(u_{t},\nabla\partial_{\mathrm{h}} u)\|_{0}^2
+(1-i)\|\partial_3\partial_{\mathrm{h}}q\|_{0}\|\partial_3^{3}u_{\mathrm{h}}\|_{0}
+\|\nabla q\|_{1}\|u\|_{3}^{1/2}\|u\|_{\underline{2},1}^{1/2}+\sqrt{\mathcal{E}}\mathcal{D}.
\end{align}

Furthermore, by virtue of \eqref{2026195638}$_2$ and \eqref{1227213738}, we directly estimate that
\begin{align}\label{202604151253}
\|\partial_3\partial_{\mathrm{h}}q\|_{0}^2
&\lesssim\bar{\mathcal{D}}+\|\partial_{\mathrm{h}}(\partial_{3}^2u,W)\|_{0}^2+\|(\mathcal{N}^6,\mathcal{N}^7)\|_{1}^2 \nonumber\\[1mm]
&\lesssim\bar{\mathcal{D}}+\|\partial_{\mathrm{h}}(\partial_{3}^2u,W)\|_{0}^2+\sqrt{\mathcal{E}}\mathcal{D}.
\end{align}
Consequently, combining  \eqref{2092650411339n}  with \eqref{20260411nc}, using \eqref{202604151253} and Young's inequality,
we conclude that there exist positive constants $c_{i,j}>0$, such that
\begin{align}\label{20260416nc}
&\frac{\mathrm{d}}{\mathrm{d}t}\sum_{j=0}^{i}c_{i,j}\|\partial_{\mathrm{h}}^{j}\partial_{3}^{1-i}W\|_0^2
+\frac{\lambda_{3}}{2}\sum_{j=0}^{i}c_{i,j}
\big(\|\partial_{\mathrm{h}}^{j}\partial_{3}^{1-i}W\|_{0}^2+\|\partial_{\mathrm{h}}^{j}\partial_{3}^{3-i}u\|_{0}^2\big)\nonumber\\[1mm]
&\lesssim\bar{\mathcal{D}}+  \|(\partial_{3}^{2-i}q,\partial_{3}^{2-i}\partial_{\mathrm{h}} u)\|_{\underline{i},0}^2
+(1-i)\|W\|_{1,0}^2
+\|\nabla q\|_{1}\|u\|_{3}^{1/2}\|u\|_{\underline{2},1}^{1/2}
+\sqrt{\mathcal{E}}\mathcal{D}.
\end{align}

If we define
\begin{align*}%\label{202604162140nc}
\overline{\|\partial_{3}^{1-i}W\|}_{\underline{i},0}^2:=\sum_{j=0}^{i}c_{i,j}\|\partial_{\mathrm{h}}^{j}\partial_{3}^{1-i}W\|_0^2,
\end{align*}
then this functional is equivalent to $\|\partial_{3}^{1-i}W\|_{\underline{i},0}^2$, and we immediately arrive at the desired \eqref{202604102028}.
This completes the proof.
\hfill$\Box$
\end{pf}

\begin{rem}
It should be noted that although we can derive the estimate for $\|\partial_{3}^{2-i}\mathrm{div}u\|_{\underline{i},0}^2$ from Lemma \ref{lem:dfifessimM},
we cannot apply the Stokes estimate to bound $\nabla q$. This is because we do not yet have control over $\nabla\times w$ before Lemma \ref{uwnormal}.
\end{rem}

We now turn to the estimate for $\mathrm{div}w$.
Following the same procedure as in \eqref{202604101338}, by applying the divergence operator to \eqref{0101n}$_3$ and using the identity $\mathrm{div}\nabla\times u=0$,
we obtain
\begin{align}\label{202604101338nm}
\bar{\rho}\big(\mathrm{div}w_{t}+u\cdot\nabla\mathrm{div}w \big)+4\chi\mathrm{div}w=\mathcal{N}^{10},
\end{align}
where $\mathcal{N}^{10}$ is defined in \eqref{20260412a}.
We now establish the following estimates for $\mathrm{div}w$.
\begin{lem}\label{divwnormal}
Under assumption \eqref{aprpioses} with sufficiently small $\delta$, it holds that, %for $0\leqslant i\leqslant 1$,
\begin{align}\label{20260411nd}
&\frac{\mathrm{d}}{\mathrm{d}t}\|\mathrm{div}w\|_{1}^2+c\|\mathrm{div}w\|_{1}^2
\lesssim\sqrt{\mathcal{E}}\mathcal{D}.
\end{align}
\end{lem}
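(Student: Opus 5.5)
The plan is to treat \eqref{202604101338nm} as a damped transport equation for $\mathrm{div}w$ and run a standard $H^1$ energy estimate, with no boundary terms arising because $u|_{\partial\Omega}=0$.

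\textbf{Zeroth-order estimate.} Multiply \eqref{202604101338nm} by $\mathrm{div}w$ and integrate over $\Omega$. Since $u\cdot\vec n=0$ on $\partial\Omega$, we have
\[
\int u\cdot\nabla\mathrm{div}w\,\mathrm{div}w\,\mathrm{d}x=-\frac12\int \mathrm{div}u\,|\mathrm{div}w|^2\mathrm{d}x ,
\]
and this is bounded by $\|u\|_{3}\|w\|_{1}^2\lesssim\sqrt{\mathcal{E}}\mathcal{D}$ through \eqref{embed2}. This gives
\[
\frac{\bar\rho}{2}\frac{\mathrm{d}}{\mathrm{d}t}\|\mathrm{div}w\|_0^2+4\chi\|\mathrm{div}w\|_0^2\lesssim \|\mathcal{N}^{10}\|_0\|\mathrm{div}w\|_0+\sqrt{\mathcal{E}}\mathcal{D}.
\]

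\textbf{First-order estimate.} Apply $\partial_k$ for $k=1,2,3$, including the normal derivative, to \eqref{202604101338nm}. No boundary condition on $w$ is needed, since the equation has no spatial second-order operator acting on $w$. Then test with $\partial_k\mathrm{div}w$. The transport term splits into $u\cdot\nabla\partial_k\mathrm{div}w$ and the commutator $\partial_k u\cdot\nabla\mathrm{div}w$.
\begin{itemize}
\item The first piece is handled as before, because $u$ vanishes on the boundary, and yields $\tfrac12\int\mathrm{div}u|\partial_k\mathrm{div}w|^2$.
\item The commutator is bounded by $\|\nabla u\|_{L^\infty}\|w\|_2^2\lesssim\|u\|_3\|w\|_2^2$.
\end{itemize}
Both pieces are therefore $\lesssim\sqrt{\mathcal{E}}\mathcal{D}$, since $\|u\|_3^2$ and $\|w\|_2^2$ lie in $\mathcal{D}$ and $\|w\|_2^2$ lies in $\mathcal{E}$. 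The forcing term is bounded by $\|\mathcal{N}^{10}\|_1\|\mathrm{div}w\|_1$.

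\textbf{Forcing term and summation.} The only real input is \eqref{12281530}, namely $\|\mathcal{N}^{10}\|_1\lesssim\sqrt{\mathcal{E}\mathcal{D}}$. Combined with $\|\mathrm{div}w\|_1\le\|w\|_2\lesssim\sqrt{\mathcal{D}}$, it gives
\[
\|\mathcal{N}^{10}\|_1\|\mathrm{div}w\|_1\lesssim\sqrt{\mathcal{E}}\mathcal{D}.
\]
Summing the zeroth- and first-order identities then yields \eqref{20260411nd}, with $c$ a constant of size $8\chi/\bar\rho$. The factor $\bar\rho$ multiplying the time derivative can be absorbed into the implied constants, or one can use $\bar\rho\|\mathrm{div}w\|_1^2$ as the energy.

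\textbf{Main obstacle.} The one place requiring care is that $\mathcal{N}^{10}$ contains $\mathrm{div}(q w_t)$, whose $H^1$ norm involves $\nabla^2 q\, w_t$ and $q\nabla^2 w_t$. The latter is not in $\mathcal{D}$. This is resolved exactly as in Lemma \ref{201806291049}: use the combination $w_t+u\cdot\nabla w$ and the bound \eqref{202604122036}, so that
\[
\|w_t+u\cdot\nabla w\|_2\lesssim\|(\nabla u,w)\|_2 .
\]
Since that bound is already packaged into \eqref{12281530}, the proof reduces to the routine transport estimates above.
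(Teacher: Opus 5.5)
Your proof is correct and reaches the same inequality; it differs from the paper only in the multiplier you use.

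You test \eqref{202604101338nm} against $\mathrm{div}w$ and $\partial_k\mathrm{div}w$. The paper instead mimics Lemma \ref{lem:dfifessimM}: it takes $\|\cdot\|_1^2$ of both sides of \eqref{202604101338nm} and expands the square of $\bar\rho(\mathrm{div}w_t+u\cdot\nabla\mathrm{div}w)+4\chi\mathrm{div}w$.

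\begin{itemize}
\item The cross term is exactly $8\chi$ times your tested identity. It therefore produces $4\chi\bar\rho\frac{\mathrm{d}}{\mathrm{d}t}\|\mathrm{div}w\|_1^2$ and the same transport error $\|u\|_3\|w\|_2^2$ that you found.
\item The two diagonal terms give the damping $\|4\chi\mathrm{div}w\|_1^2$. They also give an extra dissipation $\|\bar\rho(\mathrm{div}w_t+u\cdot\nabla\mathrm{div}w)\|_1^2$, which is never used later.
\item The forcing enters only as $\|\mathcal{N}^{10}\|_1^2\lesssim\mathcal{E}\mathcal{D}\lesssim\sqrt{\mathcal{E}}\mathcal{D}$.
\end{itemize}

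So the squaring route never pairs $\mathcal{N}^{10}$ with $\mathrm{div}w$. Your route needs the additional fact $\|\mathrm{div}w\|_1\lesssim\sqrt{\mathcal{D}}$ to bound $\|\mathcal{N}^{10}\|_1\|\mathrm{div}w\|_1$. That fact holds, so both arguments close using only \eqref{12281530}, and yours is the more standard energy method.
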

\begin{pf}
Following the derivation of \eqref{202112152140}, we take the $\|\cdot\|_1^2$ on both sides of the identity \eqref{202604101338nm} to obtain
\begin{align}\label{20260411ne}
&4\chi\bar{\rho}\frac{\mathrm{d}}{\mathrm{d}t}\|\mathrm{div}w\|_{1}^2+\|4\chi\mathrm{div}w\|_{1}^2+\|\bar{\rho}\big(\mathrm{div}w_{t}+u\cdot\nabla\mathrm{div}w \big)\|_{1}^2
\lesssim\|u\|_{3}\|w\|_{2}^2+\|\mathcal{N}^{10}\|_{1}^2.
\end{align}
Thanks to \eqref{12281530} and \eqref{aprpioses}, we can estimate that
$$\|\mathcal{N}^{10}\|_{1}^2\lesssim\sqrt{\mathcal{E}}\mathcal{D}.$$
Putting it into \eqref{20260411ne} then yields \eqref{20260411nd}. This completes the proof.
\hfill$\Box$
\end{pf}

Next, we invoke the regularity theory of the elliptic problem to derive higher-order energy estimates for $u$.
We first reformulate the equations \eqref{0101n}$_2$, subject to the boundary condition \eqref{0101c}, as the following Lam\'e problem:
\begin{equation}\label{n0101nn928n}
\begin{cases}
-(\mu+\chi)\Delta u-(\mu+\mu'-\chi)\nabla\mathrm{div}u= F &\mbox{in } \Omega ,\\[0.5mm]
u=0 &\mbox{on } \partial\Omega,
\end{cases}
\end{equation}
where we have define that
\begin{align}
& F :=-\bar{\rho}u_t -  P'(\bar{\rho})\nabla q+2\chi \nabla \times w+\mathcal{N}^6+\mathcal{N}^7.\nonumber
\end{align}
\begin{lem}\label{lem:11292030}
Under assumption \eqref{aprpioses} with sufficiently small $\delta$, it holds that
\begin{align}\label{11292030}
\|u\|_{2}^2&\lesssim\|w\|_{1}^2+\|(u_t,\nabla q)\|_{0}^2.
\end{align}
\end{lem}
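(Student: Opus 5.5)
The plan is to apply standard $H^2$ elliptic regularity for the Lamé system \eqref{n0101nn928n} on the strip $\Omega$ with homogeneous Dirichlet data. Under the physical restrictions $\mu>0$, $\chi>0$ and $2\mu+3\mu'-4\chi\geqslant0$, the operator $-(\mu+\chi)\Delta-(\mu+\mu'-\chi)\nabla\mathrm{div}$ is strongly elliptic. Indeed, its symbol coefficients give $\mu+\chi>0$ and $(\mu+\chi)+(\mu+\mu'-\chi)=2\mu+\mu'>0$, where the latter follows from $3(2\mu+\mu')\geqslant 4\mu+4\chi>0$. Since $\Omega=\mathbb{R}^2\times(0,1)$ has flat boundary, the $H^2$ estimate
\begin{align*}
\|u\|_{2}\lesssim\|F\|_{0}
\end{align*}
holds uniformly, with no lower-order term $\|u\|_0$ needed. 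First, testing with $u$ and using the Friedrichs inequality \eqref{friedrich} gives $\|u\|_1\lesssim\|F\|_0$. Next, the tangential difference quotients in $x_{\mathrm{h}}$ preserve the boundary condition and bound $\|\nabla\partial_{\mathrm{h}}u\|_0$. Finally, the remaining $\partial_3^2u$ is recovered from the equation itself: the $\mathrm{h}$-components involve $(\mu+\chi)\partial_3^2u_{\mathrm{h}}$, and the third component involves $(2\mu+\mu')\partial_3^2u_3$, each equal to $F$ plus already controlled terms. I will take this linear estimate, presumably among the auxiliary results the paper cites, as known.

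With the linear estimate in hand, I bound $F$ term by term:
\begin{align*}
\|F\|_0\lesssim\|u_t\|_0+\|\nabla q\|_0+\|\nabla\times w\|_0+\|\mathcal{N}^6\|_0+\|\mathcal{N}^7\|_0 .
\end{align*}
Clearly $\|\nabla\times w\|_0\lesssim\|w\|_1$. For the nonlinear terms, \eqref{1227213738} with $i=0$ gives
\begin{align*}
\|(\mathcal{N}^6,\mathcal{N}^7)\|_0\lesssim\|(q,u)\|_2\big(\|u\|_2+\|(\nabla w,\nabla q)\|_0\big)\lesssim\delta\big(\|u\|_2+\|w\|_1+\|\nabla q\|_0\big),
\end{align*}
by the a priori assumption \eqref{aprpioses}.

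The one point needing care is that $\mathcal{N}^6$ contains $qu_t$, and bounding it via $u_t+u\cdot\nabla u$ as in \eqref{202604122032} could be circular. I will avoid that route and estimate directly: $\|qu_t\|_0\lesssim\|q\|_{L^\infty}\|u_t\|_0\lesssim\delta\|u_t\|_0$ by \eqref{embed2}, and $\|u\cdot\nabla u\|_0\lesssim\|u\|_2\|u\|_2\lesssim \delta\|u\|_2$. These direct bounds are consistent with the claimed bound.

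Squaring gives
\begin{align*}
\|u\|_2^2\lesssim \|w\|_1^2+\|(u_t,\nabla q)\|_0^2+\delta^2\|u\|_2^2 .
\end{align*}
Since $\delta$ is sufficiently small, the term $\delta^2\|u\|_2^2$ is absorbed into the left-hand side, yielding \eqref{11292030}. The main obstacle, if any, is only justifying the uniform $H^2$ Lamé estimate on the unbounded strip without a zero-order term. The Dirichlet condition and the Friedrichs inequality remove the need for such a term, so the step is routine.
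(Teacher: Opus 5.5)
Your proposal is correct and follows the paper's proof: apply the Lamé estimate \eqref{Ellipticestimate} to \eqref{n0101nn928n}, bound $\|F\|_0$, control $(\mathcal{N}^6,\mathcal{N}^7)$ via \eqref{1227213738} with $i=0$ and \eqref{aprpioses}, and absorb the resulting $\delta\|u\|_2$ term. Your separate direct treatment of $qu_t$ is valid but not needed, because \eqref{202604122032} bounds $u_t+u\cdot\nabla u$ directly from the momentum equation \eqref{0101n}$_2$, so that route is not circular.
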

\begin{pf}
Applying the Lam\'e estimate \eqref{Ellipticestimate} to \eqref{n0101nn928n}, we have
\begin{align}\label{202108040932}
&\|u\|_{2}^2\lesssim\|F\|_{0}^2
\lesssim \|w\|_{1}^2+\|(u_t,\nabla q)\|_{0}^2+\|(\mathcal{N}^6,\mathcal{N}^7)\|_{0}^2.
\end{align}
Invoking \eqref{1227213738} with $i=0$ and \eqref{aprpioses}, we can estimate that
\begin{align*}
\|(\mathcal{N}^6,\mathcal{N}^7)\|_{0}^2&\lesssim\|(u,q)\|_{2}^2\big(\|u\|_{2}^2+\|(\nabla w,\nabla q)\|_{0}^2\big)\nonumber\\[1mm]
&\lesssim\delta\big(\|u\|_{2}^2+\|w\|_{1}^2+\|\nabla q\|_{0}^2\big).
\end{align*}
Substituting the above estimate into \eqref{202108040932}, we immediately obtain \eqref{11292030} for sufficiently small $\delta$.
This completes the proof.
\hfill$\Box$
\end{pf}

%%%%%%%%%%%%%%%%%%%%%%%%%%%%%%%%%%%%%%%%%%%%%%%%%%%%%%%%%%%%%%%%%%%%%%%
%\subsection{Energy inequality}
\subsection{A priori energy estimate}
In this subsection, we assemble the aforementioned estimates to establish the \emph{a priori} energy estimates for $\mathcal{E}$ and $\mathcal{D}$.
We begin by proving the equivalence estimate of $\mathcal{E}$.

%%%%%%%%%%%%%%%%%%%%%%%%%%%%%%%%%%%%%%%%%%%%%%%%%%%%%%%%%%%%%%%%%%%%%%%
\begin{lem}\label{lem:12281620}
Under assumption \eqref{aprpioses} with sufficiently small $\delta$, it holds that
\begin{align}\label{12281622}
\mathcal{E}\;\;\mbox{is equivalent to}\;\;\|(q, u, w)\|_{2}^2.
\end{align}
\end{lem}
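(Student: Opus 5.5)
The plan is to prove the two inequalities $\|(q,u,w)\|_2^2\leqslant\mathcal{E}$ and $\mathcal{E}\lesssim\|(q,u,w)\|_2^2$. The first is trivial, since $\|(q,u,w)\|_2^2$ is one of the terms of $\mathcal{E}$. For the second, it suffices to bound $\|(q_t,w_t)\|_1^2+\|u_t\|_0^2$ by $\|(q,u,w)\|_2^2$. We read each time derivative off the corresponding equation of \eqref{0101n}, using that every equation is first order in time. Throughout we rely on the \emph{a priori} bound \eqref{aprpioses}, the lower bound for the density \eqref{202604121856nh}, and the product estimate \eqref{product}.

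\textbf{Step 1 ($q_t$).} From \eqref{0101n}$_1$ we have $q_t=-u\cdot\nabla q-(\bar{\rho}+q)\mathrm{div}u$. Estimating in $H^1$ with \eqref{product}, and using that $H^2\hookrightarrow L^\infty$ by \eqref{embed2}, gives
\begin{align*}
\|q_t\|_1\lesssim \|u\|_2\|q\|_2+(1+\|q\|_2)\|u\|_2\lesssim\|u\|_2 ,
\end{align*}
where the last step uses the smallness of $\delta$. Equivalently, this is \eqref{1227213738n} with $i=1$ added to the linear part $\bar{\rho}\,\mathrm{div}u$.

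\textbf{Step 2 ($w_t$).} From \eqref{0101n}$_3$, dividing by $\bar{\rho}+q\geqslant\bar{\rho}/2$, we write $w_t=-u\cdot\nabla w+(\bar{\rho}+q)^{-1}(2\chi\nabla\times u-4\chi w)$. Here \eqref{202604122036} with $j=1$ already gives $\|w_t+u\cdot\nabla w\|_1\lesssim\|(\nabla u,w)\|_1$. We also have $\|u\cdot\nabla w\|_1\lesssim\|u\|_2\|w\|_2$. Together these yield $\|w_t\|_1\lesssim\|(u,w)\|_2$. The only point needing care is the $H^1$ norm of $(\bar{\rho}+q)^{-1}$ times an $H^1$ function. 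This is handled by noting that $\nabla (\bar{\rho}+q)^{-1}=-(\bar{\rho}+q)^{-2}\nabla q$ lies in $L^3$ or $L^6$ and pairs with $H^1\subset L^6$ or $L^3$.

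\textbf{Step 3 ($u_t$).} From \eqref{0101n}$_2$, dividing again by $\bar{\rho}+q$, \eqref{202604122032} with $i=0$ gives
\begin{align*}
\|u_t+u\cdot\nabla u\|_0\lesssim\|u\|_2+\|(\nabla q,\nabla w)\|_0 .
\end{align*}
We also have $\|u\cdot\nabla u\|_0\lesssim\|u\|_2^2$. Hence $\|u_t\|_0\lesssim\|(q,u,w)\|_2$.

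Summing the squares of these three bounds gives $\mathcal{E}\lesssim\|(q,u,w)\|_2^2$, which proves \eqref{12281622}. No real obstacle is expected. The only mildly delicate point is Step 2, namely justifying the $H^1$ product and quotient estimates with the variable coefficient $(\bar{\rho}+q)^{-1}$. This is routine under \eqref{202604121856nh} and \eqref{aprpioses} in three dimensions.
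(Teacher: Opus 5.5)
Your proposal is correct and follows essentially the same route as the paper: the trivial direction is immediate, and the other direction bounds $\|(q_t,w_t)\|_1^2+\|u_t\|_0^2$ by reading the time derivatives off the equations. The only difference is bookkeeping. The paper uses the linearized form \eqref{0101nb} together with the nonlinear bounds \eqref{1227213738}--\eqref{1227213738n}, whereas you divide the full equations by $\bar{\rho}+q$ and apply \eqref{202604122032}--\eqref{202604122036} directly, and those are exactly the estimates underlying \eqref{1227213738}--\eqref{1227213738n}.
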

\begin{pf}
To obtain \eqref{12281622}, it suffices to verify
\begin{align}
\label{201807311651}
\|(q_{t},w_{t})\|_1^2+\|u_t\|_0^2\lesssim \|\nabla q\|_{1}^2+\|(u, w)\|_{2}^2.
\end{align}
Indeed, a direct application of \eqref{0101nb}$_1$--\eqref{0101nb}$_3$ leads to
\begin{align*}
\|(q_{t},w_{t})\|_1^2+\|u_t\|_0^2\lesssim \|u\|_{2}^2+\| w\|_{1}^2+\|\nabla q\|_{0}^2+\|(\mathcal{N}^5,\mathcal{N}^8)\|_{1}^2+\|(\mathcal{N}^6,\mathcal{N}^7)\|_{0}^2.
\end{align*}
Applying \eqref{1227213738}--\eqref{1227213738n}, we arrive at \eqref{201807311651} for sufficiently small $\delta$.
This completes the proof.
\hfill$\Box$
\end{pf}

With Proposition \ref{pro12281500} and Lemmas \ref{lem:dfifessimM}--\ref{lem:12281620} in hand, we derive the following energy inequality.
\begin{pro}
\label{pro:0501n}
Under assumption \eqref{aprpioses} with sufficiently small $\delta$,
there exists a function $\tilde{\mathcal{E}}$, which is equivalent to $\mathcal{E}$, such that
\begin{align}   \label{emdslds}
\frac{\mathrm{d}}{\mathrm{d}t} \tilde{\mathcal{E}}+c\mathcal{D}\leqslant0.
\end{align}
\end{pro}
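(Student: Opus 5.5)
The plan is to build $\tilde{\mathcal{E}}$ as a weighted sum of the functionals already obtained:
\[
\tilde{\mathcal{E}}:=\tilde{\bar{\mathcal{E}}}
+\kappa_1\sum_{i=0}^{1}\|\partial_3^{2-i}q\|_{\underline{i},0}^2
+\kappa_2\sum_{i=0}^{1}\overline{\|\partial_3^{1-i}W\|}_{\underline{i},0}^2
+\kappa_3\|\mathrm{div}w\|_1^2,
\]
with the weights $1\gg\kappa_1\gg\kappa_2\gg\kappa_3>0$ chosen so that each lower-priority inequality has its right-hand side absorbed by the dissipation of the higher-priority ones. The basic mechanism is that every right-hand side in Lemmas \ref{lem:dfifessimM}--\ref{divwnormal} contains only $\bar{\mathcal{D}}$, tangential quantities such as $\|\partial_3^{2-i}\partial_{\mathrm{h}}u\|_{\underline{i},0}$, $\|(\nabla\times w)_3\|$ and $\|W\|_{1,0}$, or quantities one level "more tangential." So I would close the system recursively in $i$, starting from $i=1$ (more horizontal derivatives) and then $i=0$.

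\textbf{Ordering within the $q$ and $W$ estimates.} For Lemma \ref{lem:dfifessimM}, $\|\partial_3\partial_{\mathrm{h}}u\|_{\underline{1},0}$ is part of $\bar{\mathcal{D}}$. The term $\|(\nabla\times w)_3\|_{\underline{1},0}$ is also tangential, since $(\nabla\times w)_3=\partial_1w_2-\partial_2w_1$. Hence the $q$-estimate with $i=1$ costs only $\bar{\mathcal{D}}$. For $i=0$, the terms $\|\partial_3^2\partial_{\mathrm{h}}u\|_0$ and $\|\partial_3(\nabla\times w)_3\|_0$ involve $\partial_{\mathrm{h}}(\partial_3^2u,W)$ and $\partial_{\mathrm{h}}\partial_3 w$. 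These are controlled by the $i=1$ dissipation of Lemma \ref{uwnormal}, and by $\bar{\mathcal{D}}$ together with $\|w\|_{2}$ pieces.

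This forces a specific interleaving. I would put the large weight on $i=1$ of Lemma \ref{uwnormal} relative to $i=0$. Between the $q$ and $W$ estimates, I would run each level in the pattern $q$-level-$i$, then $W$-level-$i$. The $W$ estimate at level $i$ needs $\|\partial_3^{2-i}q\|_{\underline{i},0}$, which comes from the $q$ estimate at the same level with a larger weight. The $q$ estimate at level $i=0$ needs $W$ at level $i=1$, which is already available. The term $(1-i)\|W\|_{1,0}^2$ at $i=0$ is likewise absorbed by the $i=1$ $W$-dissipation.

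\textbf{The boundary term.} The mixed term $\|\nabla q\|_1\|u\|_3^{1/2}\|u\|_{\underline{2},1}^{1/2}$ I would split by Young's inequality as $\epsilon(\|\nabla q\|_1^2+\|u\|_3^2)+C_\epsilon\|u\|_{\underline{2},1}^2$. The last piece is bounded by $\bar{\mathcal{D}}$, and the $\epsilon$-piece is absorbed by $c\mathcal{D}$ once we know $\mathcal{D}$ is controlled by the collected dissipation.

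\textbf{Recovering $\mathcal{D}$.} I would show that the collected dissipation $\mathcal{D}_{\rm col}$ (tangential, $\partial_3q$, $W$, $\partial_3^{3-i}u$, $\mathrm{div}w$) dominates $\mathcal{D}$:
\begin{itemize}
\item $\|u\|_3$: from $\partial_3^{3-i}u$ plus $\bar{\mathcal{D}}$, or alternatively the Lamé estimate at one order higher.
\item $\|w\|_2$: from $W$ and $\mathrm{div}w$ via the div--curl estimate for $\nabla w$. In the strip $w$ has no boundary condition, so I would apply the div--curl bound only to $\partial_3$-derivatives, while tangential parts come from $\bar{\mathcal{D}}$ and the normal component is recovered as $\partial_3w_3=\mathrm{div}w-\mathrm{div}_{\mathrm{h}}w_{\mathrm{h}}$ and $\partial_3w_{\mathrm{h}}=\pm W_{\mathrm{h}}^\perp+\nabla_{\mathrm{h}}w_3$.
\item $\|\nabla_{\mathrm{h}}q\|_1$: from \eqref{202604151253}, together with a direct use of \eqref{2026195637}$_1$ for $\nabla_{\mathrm{h}}q$.
\item $\|(q_t,w_t)\|_1$: from the equations, as in Lemma \ref{lem:12281620}.
\end{itemize}
Then $\sqrt{\mathcal{E}}\mathcal{D}\lesssim\delta\mathcal{D}$ is absorbed for small $\delta$.

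\textbf{Equivalence and the main obstacle.} The equivalence $\tilde{\mathcal{E}}\sim\mathcal{E}$ requires $\|(q,u,w)\|_2^2\lesssim\tilde{\mathcal{E}}$. For $u$ I would use \eqref{11292030}, bounding $\|u\|_2$ by $\|w\|_1+\|(u_t,\nabla q)\|_0$, all of which lie in $\tilde{\mathcal{E}}$. For $w$ and $q$ I would use the same div--curl and tangential decomposition. Lemma \ref{lem:12281620} then finishes the equivalence. I expect the main obstacle to be the bookkeeping of the hierarchy of weights. In particular, $\|\partial_3\partial_{\mathrm{h}}q\|$ enters the $i=0$ $W$-estimate and is itself bounded via the $i=1$ $W$-dissipation, so the weights must be arranged consistently and no circular absorption may occur.
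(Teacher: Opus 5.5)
Your proposal follows essentially the paper's route. The $\partial_3q$-, $W$- and $\mathrm{div}w$-estimates are combined with weights ordered as ($\partial_3q$, level $i=1$) $\gg$ ($W$, level $i=1$) $\gg$ ($\partial_3q$, level $i=0$) $\gg$ ($W$, level $i=0$), which is exactly the paper's $\mathcal{E}_1=\overline{\|\partial_3W\|}_0^2+c_1\|\partial_3^2q\|_0^2+c_3(\overline{\|W\|}_{\underline{1},0}^2+c_2\|\partial_3q\|_{\underline{1},0}^2)$. The boundary term is split by Young's inequality, with its $\bar{\mathcal{D}}$-part absorbed by an extra multiple $c_5\tilde{\bar{\mathcal{E}}}$ of the tangential energy. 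Finally, $\mathcal{D}$ and $\mathcal{E}$ are recovered from the momentum equation, the div--curl algebra and the Lam\'e estimate. Two corrections are needed. First, your displayed $\tilde{\mathcal{E}}$, with one weight $\kappa_1$ for both $q$-levels and $\kappa_2\ll\kappa_1$ for both $W$-levels, cannot close, because the $i=0$ $q$-estimate needs the $i=1$ $W$-dissipation (via $\partial_3W_3=-\mathrm{div}_{\mathrm{h}}W_{\mathrm{h}}$); it must be replaced by the interleaved weights you describe in words. Second, the $\epsilon$ in the boundary-term splitting has to be fixed after those normal weights and before the tangential weight.
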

\begin{pf}
To begin with, recalling that $W:=\nabla\times w$ and noting that $\mathrm{div}W=0$, we have
\begin{align}   \label{01013630}
\partial_3W_3=-\mathrm{div}_{\mathrm{h}}W_{\mathrm{h}}.
\end{align}
Furthermore, since $W_3=\partial_1w_2-\partial_2w_1$, we obtain the estimate
\begin{align}   \label{01011630}
\|W_3\|_{\underline{i},0}^2\lesssim\|w\|_{\underline{2},0}\quad\mbox{for}\;0\leqslant i\leqslant 1.
\end{align}
Employing \eqref{01013630}--\eqref{01011630}, we deduce from \eqref{202112152140} and \eqref{202604102028} that
there exist a constant $\lambda_4> 0$ and three suitably large constants $c_1$,  $c_2$  and $c_3$ such that
\begin{align}   \label{01011430}
\frac{\mathrm{d}}{\mathrm{d}t} \mathcal{E}_1+\lambda_4\mathcal{D}_1\lesssim
\bar{\mathcal{D}}+\|\nabla q\|_{1}\|u\|_{3}^{1/2}\|u\|_{\underline{2},1}^{1/2}+\sqrt{\mathcal{E}}\mathcal{D},
\end{align}
where the functionals $\mathcal{E}_1$ and $\mathcal{D}_1$ are defined by
\begin{align*}
&\mathcal{E}_1:=\overline{\|\partial_{3}W\|}_{0}^2+c_1\|\partial_{3}^{2}q\|_{0}^2+c_3\big(\overline{\|W\|}_{\underline{1},0}^2+c_2\|\partial_{3}q\|_{\underline{1},0}^2\big) ,\\[1mm]
&\mathcal{D}_1:=\big(\|\partial_{3}W\|_{0}^2+\|\partial_{3}^{3}u\|_0^2\big)+c_1\|\partial_{3}^{2}q\|_{0}^2
+c_3\big((\|W\|_{\underline{1},0}^2+\|\partial_{3}^{2}u\|_{\underline{1},0}^2)+c_2\|\partial_{3}q\|_{\underline{1},0}^2\big).
\end{align*}
Summing up \eqref{12281503nmnm}, \eqref{20260411nd} and \eqref{01011430}, we conclude that for a suitably small constant $\lambda_5> 0$
and a suitably large constant $c_4$, the following inequality holds:
\begin{align}   \label{010117nm30}
\frac{\mathrm{d}}{\mathrm{d}t} \mathcal{E}_2+\lambda_5\mathcal{D}_2\lesssim
\|\nabla q\|_{1}\|u\|_{3}^{1/2}\|u\|_{\underline{2},1}^{1/2}+\sqrt{\mathcal{E}}\mathcal{D},
\end{align}
with $\mathcal{E}_2$ and $\mathcal{D}_2$ defined by
\begin{align*}
&\mathcal{E}_2:=\|\mathrm{div}w\|_{1}^2+\mathcal{E}_1+c_4\tilde{\bar{\mathcal{E}}}\quad\mbox{and}\quad
\mathcal{D}_2:=\|\mathrm{div}w\|_{1}^2+\mathcal{D}_1+c_4\bar{\mathcal{D}}.
\end{align*}

In view of \eqref{0101nb}$_2$, \eqref{1227213738} and the assumption \eqref{aprpioses} with sufficiently small $\delta$, we have
\begin{align*}
&%\label{010136n30}
\|\nabla q\|_{1}\lesssim\big(\|u\|_{3}+\|w\|_{2}+\bar{\mathcal{D}}\big).
\end{align*}
In addition, straightforward computations show that
\begin{align*}
&%\label{01013530}
\|u\|_{3}\lesssim\|u\|_{\underline{2},1}+\sum_{i=0}^1\|\partial_3^{3-i}u\|_{\underline{i},0},\\
&%\label{01013630}
\|w\|_{2}\lesssim\|w\|_{\underline{2},0}+\|W\|_{1}+\|\mathrm{div}w\|_{1}.
\end{align*}
Therefore, using \eqref{11292030}, \eqref{12281622}--\eqref{201807311651} and the above three estimates, we conclude that
for sufficiently small $\delta$, the following equivalence relations hold:
\begin{align}
&\label{01011507}
\mathcal{E}_2,\;\mathcal{E}\;\mbox{and}\;\|(q, u, w)\|^2_2
\;\;\mbox{are equivalent},\\[1mm]
&\label{01011455}
\mathcal{D}_2\;\;\mbox{is equivalent to}\;\;\mathcal{D}.
\end{align}
Consequently, combining \eqref{12281503nmnm} with \eqref{010117nm30} and applying Young's inequality alongside \eqref{01011507}--\eqref{01011455}, we deduce that there exist a suitably small constant $\lambda_6> 0$ and a suitably large constant $c_5$ such that
\begin{align}   \label{010117nml30}
%\frac{\mathrm{d}}{\mathrm{d}t} \tilde{\mathcal{E}}+c\mathcal{D}\leqslant
\frac{\mathrm{d}}{\mathrm{d}t} \tilde{\mathcal{E}}+\lambda_6(\mathcal{D}_2+c_5\bar{\mathcal{D}})\leqslant0,
\end{align}
where $\tilde{\mathcal{E}}$ is defined by
\begin{align*}
&\tilde{\mathcal{E}}:=\mathcal{E}_2+c_5\tilde{\bar{\mathcal{E}}}.
\end{align*}
Clearly, $\tilde{\mathcal{E}}$ is also equivalent to $\mathcal{E}$ in light of \eqref{01011507}.
Finally, the desired energy inequality \eqref{emdslds} is a direct consequence of \eqref{01011455}--\eqref{010117nml30}.
This completes the proof.
\hfill$\Box$
\end{pf}

%%%%%%%%%%%%%%%%%%%%%%%%%%%%%%%%%%%%%%%%%%%%%%%%%%%%%%%%%%%%%%%%%%%%
%\subsection{A priori estimate}
%We can now arrive at the following \emph{a priori} energy estimate, which immediately closes the \emph{a priori} assumption \eqref{aprpioses}.
%Hence, by combining this estimate with the local existence result and a standard continuity argument, we establish Theorem \ref{thm1}.
We can now establish the following \emph{a priori} energy estimate, which combines with the well-posedness result and a continuity argument,
yields Theorem \ref{thm1}.
\begin{thm}\label{thm1nm}
There exists a sufficiently small constant $\delta_1\in(0,1)$ such that, if \eqref{aprpioses} holds for $\delta\leqslant\delta_1$, then
\begin{align}   \label{emdsldsa37n}
\mathcal{E}(t)+\int_0^{t}\mathcal{D}(\tau)\mathrm{d}\tau\lesssim \|(q^0,u^0,w^0)\|_{2}^2
\quad\;\mathrm{for \;all}\;\;t\in(0,T].
\end{align}
\end{thm}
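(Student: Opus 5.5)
The estimate \eqref{emdsldsa37n} follows by integrating the energy inequality of Proposition \ref{pro:0501n} in time and then using the equivalence relations already established. Fix $\delta_1$ no larger than the smallness thresholds required in Lemma \ref{201806291049}, Proposition \ref{pro12281500}, Lemmas \ref{lem:dfifessimM}--\ref{lem:12281620} and Proposition \ref{pro:0501n}. Then, under \eqref{aprpioses} with $\delta\leqslant\delta_1$, all of these results hold on $[0,T]$. In particular, there is a functional $\tilde{\mathcal{E}}$, equivalent to $\mathcal{E}$ with constants independent of $t$ and $T$, such that \eqref{emdslds} holds for all $t\in(0,T]$.

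Integrating \eqref{emdslds} over $(0,t)$ gives
\begin{align*}
\tilde{\mathcal{E}}(t)+c\int_0^t\mathcal{D}(\tau)\mathrm{d}\tau\leqslant\tilde{\mathcal{E}}(0).
\end{align*}
Strictly, \eqref{emdslds} is a differential inequality for a functional that is absolutely continuous in time for the strong solution. If needed, one first integrates the identities behind it and then passes to the limit, in the usual way for the regularity assumed in the a priori setting.

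Since $\tilde{\mathcal{E}}$ is equivalent to $\mathcal{E}$, the left-hand side bounds $\mathcal{E}(t)+\int_0^t\mathcal{D}\,\mathrm{d}\tau$ from above, up to a constant. For the right-hand side, I use $\tilde{\mathcal{E}}(0)\lesssim\mathcal{E}(0)$ together with Lemma \ref{lem:12281620} (see \eqref{12281622} and \eqref{01011507}), which gives $\mathcal{E}(0)\lesssim\|(q^0,u^0,w^0)\|_2^2$. Concretely, \eqref{201807311651} evaluated at $t=0$ controls $\|(q_t,w_t)(0)\|_1^2+\|u_t(0)\|_0^2$ by $\|(q^0,u^0,w^0)\|_2^2$. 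Here the time derivatives at $t=0$ are defined through the equations \eqref{0101nb}, and the nonlinear terms are absorbed via \eqref{1227213738}--\eqref{1227213738n}, because \eqref{aprpioses} holds at $t=0$ as well.

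The only point needing care is that every implicit constant in the equivalences and in \eqref{emdslds} is uniform in $t$ and $T$. This is true because each is built from the fixed constants $\lambda_k$, $c_k$ and the bounds \eqref{202604121856nh}. Once this is checked, the estimate closes with no further obstacle. For the subsequent continuity argument, one chooses $\|(q^0,u^0,w^0)\|_2^2$ small enough that the right-hand side of \eqref{emdsldsa37n} is below $\delta^2/4$. This strictly improves \eqref{aprpioses}, so the local solution extends globally, which yields Theorem \ref{thm1}.
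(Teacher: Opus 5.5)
Your proposal is correct and matches the paper's proof: the paper integrates the differential inequality \eqref{010117nml30} (equivalently \eqref{emdslds}) over $(0,t)$, uses $\tilde{\mathcal{E}}\sim\mathcal{E}$, and applies the equivalence \eqref{12281622} at $t=0$ to bound the initial energy by $\|(q^0,u^0,w^0)\|_2^2$. Your extra remarks on the uniformity of the constants and on defining the time derivatives at $t=0$ through the equations are consistent with this argument.
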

\begin{pf}
Integrating the above inequality \eqref{010117nml30} over $(0,t)$ and applying \eqref{12281622} at $t=0$,
we then arrive at the estimate \eqref{emdsldsa37n}.
\hfill$\Box$
\end{pf}

%%%%%%%%%%%%%%%%%%%%%%%%%%%%%%%%%%%%%%%%%%%%%%%%%%%%%%%%%%%%%%%%%%%%%%%%%%%%%%%%%%%%%%%%%%%%%%%%%%%%%%%%%%%%%%%%55
\section{Proof of Theorem \ref{thm2}} \label{2025thm2}
This section is dedicated to the proof of Theorem \ref{thm2}.
The main objective is also to derive the \emph{a priori} energy estimates for the solution of the problem \eqref{0102n}--\eqref{0102c}.
To this end, let $(q,u,w)$ be a solution to \eqref{0102n}--\eqref{0102c} with an associated pressure $p$ defined on $\Omega\times [0,T]$ with $T>0$
and assume that
\begin{align}
&\label{aprpioseses}
\sup_{t\in[0,T]}\left\|(q, u,w)(t)\right\|_{2}< \delta\in(0,1),
\end{align}
where $\delta$ is sufficiently small.
Thanks to \eqref{aprpioseses} and the Sobolev embedding inequality \eqref{embed2}, it is straightforward to deduce that
\begin{align}\label{202604121856nhnm}
\bar{\rho}/2\leqslant \inf_{x\in\Omega}\{\bar{\rho}+q\}\leqslant\sup_{x\in\Omega}\{\bar{\rho}+q\}\leqslant 3\bar{\rho}/2.
\end{align}

%%%%%%%%%%%%%%%%%%%%%%%%%%%%%%%%%%%%%%%%%%%%%%%%%%%%%%%%%
\subsection{Preliminary estimates}
Analogous to the compressible case, in order to exploit the nonlinear structure of \eqref{0102n} to derive tangential energy estimates, we apply
$\partial^{\alpha}$ with $\alpha\in\mathbb{N}^{1+2}$ to \eqref{0102n}--\eqref{0102c} to obtain
\begin{equation}\label{202604120102ne}
\begin{cases}
(\bar{\rho}+q)(\partial^{\alpha}u_t + u \cdot \nabla \partial^{\alpha}u)
+\nabla \partial^{\alpha}q - (\mu + \chi) \Delta \partial^{\alpha}u
=2\chi \nabla \times \partial^{\alpha}w+\mathcal{M}^{1,\alpha}
 \quad &\mbox{in } \Omega, \\[0.5mm]
(\bar{\rho}+q)\big(\partial^{\alpha}w_t + u \cdot \nabla \partial^{\alpha}w\big)  + 4\chi \partial^{\alpha} w = 2\chi \nabla \times \partial^{\alpha}u
+\mathcal{M}^{2,\alpha} \quad &\mbox{in } \Omega,\\[0.5mm]
\mathrm{div}\partial^{\alpha}u=0 \quad &\mbox{in } \Omega,\\[0.5mm]
\partial^{\alpha}u=0 &\mbox{on }\partial\Omega,
\end{cases}
\end{equation}
where $\mathcal{M}^{1,\alpha}$ and $\mathcal{M}^{2,\alpha}$ are defined by
\begin{align*}
&\mathcal{M}^{1,\alpha}:=-(\bar{\rho}+q)[\partial^{\alpha},u ]\cdot \nabla u-[\partial^{\alpha},q](u_t + u \cdot \nabla u) ,\\[1.5mm]
&\mathcal{M}^{2,\alpha}:=-(\bar{\rho}+q)[\partial^{\alpha},u ]\cdot \nabla w-[\partial^{\alpha},q](w_t+u\cdot \nabla w).
\end{align*}

Similarly to \eqref{0101nb}, when utilizing the linear structure of \eqref{0101n}, it is useful to write it as a perturbation of the linearized system:
\begin{equation}\label{0102nb}
\begin{cases}
\bar{\rho}u_t +  \nabla q - (\mu + \chi) \Delta u-2\chi \nabla \times w=\mathcal{M}^3
 \qquad &\mbox{in } \Omega, \\[0.5mm]
\bar{\rho}w_t  + 4\chi w- 2\chi \nabla \times u=\mathcal{M}^4 \quad &\mbox{in } \Omega,\\[0.5mm]
\mathrm{div}u=0 \quad &\mbox{in } \Omega,\\[0.5mm]
u=0 \quad &\mbox{on } \partial\Omega,
\end{cases}
\end{equation}
where $\mathcal{M}^3$ and $\mathcal{M}^4$ are given by
\begin{align*}
&\mathcal{M}^3:=
-\bar{\rho} u \cdot \nabla u-q(u_t + u \cdot \nabla u),\\[1.5mm]
&\mathcal{M}^4:=-\bar{\rho}u \cdot \nabla w-q(w_t + u \cdot \nabla w).
\end{align*}

We now establish estimates for these nonlinear terms.
\begin{lem}
\label{202606291049}
Under assumption \eqref{aprpioseses} with sufficiently small $\delta$, the following estimates hold:
\begin{align}
\label{12272120es12}
& \|\mathcal{M}^{1,\alpha}\|_0\lesssim\|(q,u)\|_{2}\sqrt{\mathfrak{D}}
\quad\quad\;\mathrm{for}\;\;|\alpha|\leqslant1\;\mathrm{or}\;\alpha_0=1,\\[1mm]
\label{12272120es}
& \|\mathcal{M}^{2,\alpha}\|_0\lesssim\|(q,u)\|_{2}\sqrt{\mathfrak{D}}
\quad\quad\;\mathrm{for}\;\;|\alpha|\leqslant2,\\[1mm]
&\label{1227213738es}
 \|\mathcal{M}^{3}\|_{i}\lesssim\|(q,u)\|_{2}\big(\|u\|_{1+i}+\|u_t\|_{i}\big)
\quad\;\mathrm{for}\;\;0\leqslant i\leqslant1,\\[1mm]
& \label{12276213738es}
\|\mathcal{M}^{4}\|_{i}\lesssim\|(q,u)\|_{2}\big(\|w\|_{1+i}+\|w_{t}\|_{i}\big)
\quad\mathrm{for}\;\;0\leqslant i\leqslant1.
\end{align}

In addition, let $\mathcal{M}^{5}$ and $\mathcal{M}^6$ be defined by
\begin{align}
&\label{20260412aes}
\mathcal{M}^{5}:=-\nabla\times\big(q( w_{t}+u\cdot\nabla w)\big)-\bar{\rho}[\nabla\times, u]\cdot\nabla w,\\[1mm]
&\label{20260412bes}
\mathcal{M}^{6}:=-\mathrm{div}\big(q( w_{t}+u\cdot\nabla w)\big)-\bar{\rho}[\mathrm{div}, u]\cdot\nabla w,
\end{align}
then there holds that
\begin{align}
& \|\mathcal{M}^{5}\|_1+\|\mathcal{M}^{6}\|_1\lesssim \|(q,u)\|_{2}\sqrt{\mathfrak{D}}. \label{12281530es}
\end{align}
\end{lem}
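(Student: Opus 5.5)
The plan mirrors the proof of Lemma \ref{201806291049}: all bounds follow from H\"older's inequality, the Sobolev embeddings \eqref{embed2}, the product estimate \eqref{product}, the a priori smallness \eqref{aprpioseses}, and the density bounds \eqref{202604121856nhnm}. The one new ingredient, compared with the compressible case, is a pair of auxiliary bounds for the material derivatives. These come from the equations themselves, since the pressure now sits in $\mathfrak{D}$.

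\textbf{Step 1: material derivatives.} From \eqref{0102n}$_2$--\eqref{0102n}$_3$ and \eqref{202604121856nhnm}, I would first record
\begin{align*}
\|u_t+u\cdot\nabla u\|_i\lesssim \|u\|_{2+i}+\|(\nabla p,\nabla w)\|_i,\qquad \|w_t+u\cdot\nabla w\|_j\lesssim\|(\nabla u,w)\|_j .
\end{align*}
Here $0\le i\le1$ and $0\le j\le 2$. For $j=2$ one needs care, because $w_t$ is not in $H^2$. I would instead avoid needing $j=2$, which is possible since $q$ is only differentiated once in what follows. Alternatively, one can expand $w_t+u\cdot\nabla w$ directly, using $w_t\in H^1$ and $u\cdot\nabla w\in H^1$.

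Each right-hand side is bounded by $\sqrt{\mathfrak D}$. In the first estimate, $\|u\|_3$, $\|\nabla p\|_1$ and $\|w\|_2$ all appear in $\mathfrak D$. In the second, $\|u\|_3$ and $\|w\|_2$ do.

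\textbf{Step 2: $\mathcal M^3,\mathcal M^4$.} These are plain products. For the $L^2$ case ($i=0$), I would write $\|u\cdot\nabla u\|_0\le\|u\|_{L^\infty}\|\nabla u\|_0\lesssim\|u\|_2\|u\|_1$ and $\|q u_t\|_0\le\|q\|_{L^\infty}\|u_t\|_0$. Terms such as $\|q\,u\cdot\nabla u\|_0$ are absorbed using the smallness $\|q\|_2<1$.

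For $i=1$, I would use that $H^2$ is an algebra, together with $\|fg\|_1\lesssim\|f\|_2\|g\|_1$; the latter handles $qu_t$ and $qw_t$. The estimates \eqref{1227213738es}--\eqref{12276213738es} then follow with exactly the displayed right-hand sides, and no pressure is needed.

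\textbf{Step 3: commutators.} For $\mathcal M^{1,\alpha}$, $\mathcal M^{2,\alpha}$, $\mathcal M^5$ and $\mathcal M^6$, I would expand each commutator by the Leibniz rule and bound every term by the form $\|\text{(one factor in } H^2)\|\cdot\sqrt{\mathfrak D}$. The derivative counts are the same as in the compressible case.

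For $\mathcal M^{2,\alpha}$ with $|\alpha|\le2$, two horizontal derivatives falling on $u$ give $\|\partial_{\mathrm h}^2u\,\nabla w\|_0\lesssim\|u\|_3\|w\|_2$. Here the $H^2$ smallness factor multiplies a dissipation norm, so I would arrange each term with the smaller-order factor in $L^\infty$ or $L^3$/$L^6$ as needed. When $\alpha_0=1$, the term $[\partial_t,q]$ gives $q_t(w_t+u\cdot\nabla w)$, and I would use $q_t=-u\cdot\nabla q$ to bound $\|q_t\|_{L^3}\lesssim\|u\|_2\|q\|_2$.

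For $\mathcal M^{1,\alpha}$ with $|\alpha|\le1$ or $\alpha_0=1$, the term $[\partial^\alpha,q](u_t+u\cdot\nabla u)$ needs only one derivative on $q$, combined with the Step 1 bound ($i=0$) and $L^3$/$L^6$ pairing.

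\textbf{Main obstacle.} The delicate point is $\mathcal M^5,\mathcal M^6$ in $H^1$. These contain $\nabla^2 q\,(w_t+u\cdot\nabla w)$ and $\nabla q\,\nabla(w_t+u\cdot\nabla w)$. Since only $\|q\|_2$ is available, I would put $\nabla^2q\in L^2$ against $w_t+u\cdot\nabla w\in L^\infty$.

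The $L^\infty$ bound is the crux. I would try replacing $w_t+u\cdot\nabla w$ by the equation, $(\bar\rho+q)^{-1}(2\chi\nabla\times u-4\chi w)$, which lies in $H^2$ with norm $\lesssim\|u\|_3+\|w\|_2$, up to the factor $\|q\|_2$. The embedding $H^2\subset L^\infty$ then gives the required $\sqrt{\mathfrak D}$ bound, and this substitution is what I expect to make the $H^1$ estimate close.
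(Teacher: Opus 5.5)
Your route is the paper's route. The paper proves this lemma only by pointing to Lemma \ref{201806291049} plus $q_t=-u\cdot\nabla q$, that is, material-derivative bounds read off from \eqref{0102n}$_2$--\eqref{0102n}$_3$, the product estimate \eqref{product} and the density bounds. Your treatment of \eqref{12272120es12}, \eqref{1227213738es}, \eqref{12276213738es} works, and so does your handling of every $q$-dependent piece of $\mathcal{M}^{2,\alpha}$, $\mathcal{M}^5$, $\mathcal{M}^6$.

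\textbf{The gap.} The step that fails is ``arrange each term so that the small $H^2$ factor is $(q,u)$'' for the pieces bilinear in $u$ and $w$. These are $[\partial_t,u]\cdot\nabla w=u_t\cdot\nabla w$; the terms $2\partial_{\mathrm{h}}u\cdot\nabla\partial_{\mathrm{h}}w$ and $\partial_{\mathrm{h}}^2u\cdot\nabla w$ in $[\partial_{\mathrm{h}}^2,u]\cdot\nabla w$; and $\nabla u\,\nabla^2 w$ inside $\|[\nabla\times,u]\cdot\nabla w\|_1$ and $\|[\mathrm{div},u]\cdot\nabla w\|_1$. Since $\nabla^2 w$ is only in $L^2$ and $u_t$ only in $H^1$, the best available bounds are $\|u_t\|_1\|w\|_2$ or $\|u\|_3\|w\|_2$. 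Their small factor is $\|w\|_2$, as your own bound $\|\partial_{\mathrm{h}}^2u\,\nabla w\|_0\lesssim\|u\|_3\|w\|_2$ already shows.

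No choice of H\"older exponents repairs this. Take $q^0=0$, $u^0=0$ and a small $w^0\in C_c^\infty(\Omega)$. Since $\nabla\times w^0$ is divergence free and compactly supported, at $t=0$ one gets $\nabla p=0$ and $\bar\rho u_t=2\chi\nabla\times w^0$. Hence for $\alpha_0=1$ one has $\mathcal{M}^{2,\alpha}=-2\chi(\nabla\times w^0)\cdot\nabla w^0$. This is nonzero in general: for $w^0=\varepsilon(x_1-a_1,\,x_3-a_3,\,0)$ near an interior point $a$, it equals $2\chi\varepsilon^2(1,0,0)$ there. Meanwhile $\|(q,u)\|_2=0$. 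Concentration at a small scale $\epsilon$ defeats the $\partial_{\mathrm{h}}^2$ case of \eqref{12272120es} and also \eqref{12281530es}. Take $q=0$, $u=\epsilon\beta\Phi((x-a)/\epsilon)$, $w=\beta\Psi((x-a)/\epsilon)$ with generic profiles. Then the left sides are of order $\beta^2\epsilon^{-1/2}$, while $\|u\|_2\sqrt{\mathfrak{D}}$ is of order $\beta^2$.

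\textbf{The fix.} \eqref{12272120es} and \eqref{12281530es} can only be proved with $\|(q,u,w)\|_2$ in place of $\|(q,u)\|_2$. That is also all the paper's argument delivers: its model estimates \eqref{12272120} and \eqref{12281530} are bounded by $\sqrt{\mathcal{E}\mathcal{D}}$, and $\mathcal{E}$ contains $\|w\|_2^2$. The weaker form suffices downstream, because \eqref{aprpioseses} bounds $\|w\|_2$ as well and these remainders are only ever absorbed as $\delta\mathfrak{D}$. Compare \eqref{202604102028gh}, which already carries $\|(q,u,w)\|_2\mathfrak{D}$. You should state and prove the lemma in that form.

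\textbf{Two smaller slips.}
\begin{enumerate}
\item Your Step 1 claim that $j=2$ can be avoided because $q$ is differentiated only once is false. The term $\partial_{\mathrm{h}}^2q\,(w_t+u\cdot\nabla w)$ in $\mathcal{M}^{2,\alpha}$, and the terms $q\,\nabla^2(w_t+u\cdot\nabla w)$ and $\nabla^2q\,(w_t+u\cdot\nabla w)$ in $\|\mathcal{M}^5\|_1$, all need the $H^2$ bound. As your last paragraph says, this bound holds because the combination $w_t+u\cdot\nabla w=(\bar\rho+q)^{-1}(2\chi\nabla\times u-4\chi w)$ lies in $H^2$; this is \eqref{202604122036} with $j=2$. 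The alternative via $w_t\in H^1$ would not suffice.
\item For $[\partial^\alpha,q](u_t+u\cdot\nabla u)$ with $\nabla q\in L^3$, you need $u_t+u\cdot\nabla u\in L^6$. That requires the $i=1$ bound (or just $\|u_t\|_1\leqslant\sqrt{\mathfrak{D}}$), not $i=0$.
\end{enumerate}
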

\begin{pf}
By virtue of \eqref{0102n}$_1$, we have
\begin{align*}
\|q_t\|_{i}\lesssim\|u\cdot\nabla q\|_{i}\lesssim\|u\|_{2}\| q\|_{1+i}\quad\mbox{for}\;\;0\leqslant i\leqslant1.
\end{align*}
Since the proof of Lemma \ref{202606291049} follows similarly to that of Lemma \ref{201806291049}, we omit the details.
\hfill$\Box$
\end{pf}

%%%%%%%%%%%%%%%%%%%%%%%%%%%%%%%%%%%%%%%%%%%%%%%%%%%%%%%%%%%%%%%%%%%%%%%%%
\subsection{Energy evolution for $(q, u, w)$}
In this subsection, we first derive tangential energy estimates involving the temporal and horizontal spatial derivatives of the solution. We then employ the regularity theory for the Stokes problem to obtain further estimates for $(u,p)$, and finally exploit the ODE structure of the linear perturbed system to deduce the normal estimates for $w$.

We define the tangential energy by
\begin{align} \label{20260412nhes}
\bar{\mathfrak{E}}:=\sum_{j=0}^{1}\|\partial_t^j(u,w)\|_{\underline{2-j},0}^2
\end{align}
and the corresponding tangential dissipation by
\begin{align} \label{20260412nhnes}
\bar{\mathfrak{D}}:=\sum_{j=0}^{1}\left(\|\partial_t^ju\|_{\underline{2-j},1}^2+\|\partial_t^jw\|_{\underline{2-j},0}^2\right).
\end{align}
We then have the following tangential energy estimate.
\begin{pro}\label{pro12281500es}
Under assumption \eqref{aprpioseses} with sufficiently small $\delta$,
there exists a function $\tilde{\bar{\mathfrak{E}}}$, which is equivalent to $\bar{\mathfrak{E}}$, such that
\begin{align} \label{12281503nmnmes}
&\frac{\mathrm{d}}{\mathrm{d}t}\tilde{\bar{\mathfrak{E}}}+c\bar{\mathfrak{D}}\lesssim \|(q,u)\|_{2}\mathfrak{D}.
\end{align}
\end{pro}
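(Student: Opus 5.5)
The plan mirrors the proof of Proposition \ref{pro12281500}, with the density equation replaced by pure transport and the pressure removed by incompressibility. Fix $\alpha\in\mathbb{N}^{1+2}$ with $|\alpha|\leqslant2$. I read the term $\nabla\partial^{\alpha}q$ in \eqref{202604120102ne}$_1$ as the pressure term $\nabla\partial^{\alpha}p$. I would take the $L^2$ inner products of \eqref{202604120102ne}$_1$ with $\partial^{\alpha}u$ and of \eqref{202604120102ne}$_2$ with $\partial^{\alpha}w$, and then add the results.

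Three structural facts make the linear part work.
\begin{enumerate}
\item[(i)] Since $\mathrm{div}\,\partial^{\alpha}u=0$ in $\Omega$ and $\partial^{\alpha}u|_{\partial\Omega}=0$, we have $\int\nabla\partial^{\alpha}p\cdot\partial^{\alpha}u\,\mathrm{d}x=0$. Hence no pressure contribution appears, and unlike the compressible case no $q$-energy is needed.
\item[(ii)] The transport terms produce $\frac12\int\big((\bar\rho+q)_t+\mathrm{div}((\bar\rho+q)u)\big)(|\partial^{\alpha}u|^2+|\partial^{\alpha}w|^2)\,\mathrm{d}x$, which vanishes by \eqref{0102n}$_1$ together with $\mathrm{div}u=0$.
\item[(iii)] The identities \eqref{2026g}--\eqref{2026h} hold verbatim, now with $\mathrm{div}\,\partial^{\alpha}u=0$. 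Therefore the viscous and coupling terms combine into $\mu\|\nabla\partial^{\alpha}u\|_0^2+\chi\|\nabla\times\partial^{\alpha}u-2\partial^{\alpha}w\|_0^2$.
\end{enumerate}
Exactly as in \eqref{2026l1537}, this yields
\[
\frac12\frac{\mathrm{d}}{\mathrm{d}t}\int(\bar\rho+q)(|\partial^{\alpha}u|^2+|\partial^{\alpha}w|^2)\,\mathrm{d}x+\lambda\big(\|\nabla\partial^{\alpha}u\|_0^2+\|\partial^{\alpha}w\|_0^2\big)\lesssim\Big|\int\mathcal{M}^{1,\alpha}\cdot\partial^{\alpha}u\,\mathrm{d}x\Big|+\Big|\int\mathcal{M}^{2,\alpha}\cdot\partial^{\alpha}w\,\mathrm{d}x\Big| .
\]

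Next I would bound the nonlinear terms using Lemma \ref{202606291049}.
\begin{itemize}
\item[(a)] The $\mathcal{M}^{2,\alpha}$ term is bounded by $\|(q,u)\|_2\mathfrak{D}$ for all $|\alpha|\leqslant2$, by \eqref{12272120es}.
\item[(b)] The $\mathcal{M}^{1,\alpha}$ term is handled by \eqref{12272120es12} whenever $|\alpha|\leqslant1$ or $\alpha_0=1$.
\item[(c)] The remaining case is $\alpha=\partial_{\mathrm h}^2$. Here I would write $\partial^{\alpha}=\partial_{\mathrm h}\partial^{\beta}$ with $|\beta|=1$ and integrate by parts once in $x_{\mathrm h}$, which is legitimate since there is no boundary in horizontal directions. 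This reduces the term to $\int\partial_{\mathrm h}^{-1}$-type expressions, namely $\|\partial_{\mathrm h}^{-}\mathcal{M}^{1,\alpha}\|$ in the form $\|\mathcal{M}\|_{-}$: the commutator pieces $(\bar\rho+q)[\partial_{\mathrm h}^2,u]\cdot\nabla u$ and $[\partial_{\mathrm h}^2,q](u_t+u\cdot\nabla u)$ are estimated in $L^2$ after moving one horizontal derivative onto $\partial^{\alpha}u$. Concretely, the term is bounded by $\big\|\,|\nabla q|+|\nabla u|\,\big\|$-weighted products such as $\|q\|_2\|u_t+u\cdot\nabla u\|_1\|\partial_{\mathrm h}^3u\|_0$ and $\|u\|_2\|u\|_3\|\partial_{\mathrm h}^3 u\|_0$. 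By \eqref{0102nb}$_1$, $\|u_t+u\cdot\nabla u\|_1$ is controlled by $\|u_t\|_1+\|u\|_3+\|\nabla p\|_1+\|w\|_2\lesssim\sqrt{\mathfrak{D}}$. This is the analogue of \eqref{2026g1538}, with $\|\nabla p\|_1$ now part of $\mathfrak{D}$.
\end{itemize}

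Finally, I would sum over $|\alpha|\leqslant2$ and set $\tilde{\bar{\mathfrak{E}}}:=\sum_{|\alpha|\leqslant2}\int(\bar\rho+q)(|\partial^{\alpha}u|^2+|\partial^{\alpha}w|^2)\,\mathrm{d}x$. This functional is equivalent to $\bar{\mathfrak{E}}$ by \eqref{202604121856nhnm}. Applying the Friedrichs inequality \eqref{friedrich} to $\partial^{\alpha}u$ upgrades $\|\nabla\partial^{\alpha}u\|_0$ to $\|\partial^{\alpha}u\|_1$, which gives \eqref{12281503nmnmes}.

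The only step needing care is case (c), the top-order horizontal case $\alpha=\partial_{\mathrm h}^2$ for $\mathcal{M}^{1,\alpha}$. Its difficulty is that $\partial_{\mathrm h}^2 q$ appears with only $H^2$ control of $q$. I expect the horizontal integration by parts to resolve it, since it moves one derivative onto $\partial_{\mathrm h}^3u$, which is controlled by $\|u\|_3\lesssim\sqrt{\mathfrak{D}}$.
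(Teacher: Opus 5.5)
Your route is the one the paper intends: its proof of Proposition \ref{pro12281500es} simply refers back to Proposition \ref{pro12281500}. Your structural steps (i)--(iii) and your handling of $\mathcal{M}^{1,\alpha}$ are fine. In case (c) you do not even need the horizontal integration by parts: H\"older in $L^2\times L^3\times L^6$, with $\|\partial_{\mathrm{h}}^2u\|_1\leqslant\|u\|_3$, already bounds that term by $\|(q,u)\|_2\mathfrak{D}$. In \eqref{2026g1538} the integration by parts was needed only for $\partial_{\mathrm{h}}^2\nabla\mathcal{N}_P$, which has no counterpart here.

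\textbf{The gap is step (a).} You invoke \eqref{12272120es} at $|\alpha|=2$, but that bound does not hold with only $\|(q,u)\|_2$ in front.
\begin{itemize}
\item[] For $\alpha=\partial_t$, $\mathcal{M}^{2,\alpha}$ contains $-(\bar\rho+q)u_t\cdot\nabla w$. By \eqref{0102nb}$_1$, $u_t$ is driven by $2\chi\nabla\times w$, so it is generally of the size of $w$, not of $(q,u)$, even as $\|(q,u)\|_2\to0$. The best available bound is $\|u_t\|_1\|w\|_2\leqslant\|w\|_2\sqrt{\mathfrak{D}}$.
\item[] For $\alpha=\partial_{\mathrm{h}}^2$, the piece $\partial_{\mathrm{h}}u\cdot\nabla\partial_{\mathrm{h}}w$ requires $\partial_{\mathrm{h}}u\in L^\infty$ and is only $\lesssim\|u\|_3\|w\|_2$.
\end{itemize}
So, as written, your argument yields $\|(q,u,w)\|_2\mathfrak{D}$ on the right-hand side, not $\|(q,u)\|_2\mathfrak{D}$.

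This weaker form is harmless downstream, since \eqref{202604171036} already carries $\|(q,u,w)\|_2\mathfrak{D}$. If you want the statement as written, you need two extra ingredients.

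\emph{Case $\alpha=\partial_{\mathrm{h}}^2$.} Use Agmon's inequality, $\|\partial_{\mathrm{h}}u\|_{L^\infty}+\|\partial_{\mathrm{h}}^2u\|_{L^3}\lesssim\|u\|_2^{1/2}\|u\|_3^{1/2}$. The $w$-commutator terms are then bounded by
\[
\|u\|_2^{1/2}\|u\|_3^{1/2}\|w\|_2\|\partial_{\mathrm{h}}^2w\|_0\leqslant\eta\|\partial_{\mathrm{h}}^2w\|_0^2+C_\eta\|u\|_2\|u\|_3\|w\|_2^2 .
\]
Absorb the first term into $c\bar{\mathfrak{D}}$. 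Bound the second by $\|u\|_2\mathfrak{D}$, using $\|w\|_2<\delta<1$.

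\emph{Case $\alpha=\partial_t$.} No Young splitting helps here, because the trilinear term has no small factor at all. Instead, substitute
\[
(\bar\rho+q)w_t=-(\bar\rho+q)u\cdot\nabla w-4\chi w+2\chi\nabla\times u
\]
from \eqref{0102n}$_3$ into $\int(\bar\rho+q)(u_t\cdot\nabla w)\cdot w_t\,\mathrm{d}x$.
\begin{itemize}
\item[] The dangerous piece is $4\chi\int(u_t\cdot\nabla w)\cdot w\,\mathrm{d}x=2\chi\int u_t\cdot\nabla|w|^2\,\mathrm{d}x$. It vanishes because $\mathrm{div}\,u_t=0$ and $u_t|_{\partial\Omega}=0$.
\item[] The remaining pieces contain a factor of $u$ and are $\lesssim\|u\|_2\mathfrak{D}$.
\item[] The $q_t$-part of $\mathcal{M}^{2,\partial_t}$ is $\lesssim\|(q,u)\|_2\mathfrak{D}$, via $q_t=-u\cdot\nabla q$.
\end{itemize}
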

\begin{pf}
The proof of Proposition \ref{pro12281500es} is similar to that of Proposition \ref{pro12281500}, so we omit it here.
\hfill$\Box$
\end{pf}

Next, we invoke the regularity theory of the Stokes problem to derive further estimates of $(u,p)$.
Indeed, it follows from \eqref{0102n}--\eqref{0102c} that $(u,p)$ satisfies the following Stokes problem:
\begin{equation}\label{n0101nn928nes}
\begin{cases}
-(\mu+\chi)\Delta u+\nabla p= G\quad &\mbox{in } \Omega ,\\[0.5mm]
\mathrm{div} u=0 &\mbox{in } \Omega ,\\[0.5mm]
u=0 &\mbox{on } \partial\Omega,
\end{cases}
\end{equation}
where $G$ is given by
\begin{align}
& G:=-\bar{\rho}u_t +2\chi \nabla \times w+\mathcal{M}^3.\nonumber
\end{align}
\begin{lem}\label{lem:11292030es}
Under assumption \eqref{aprpioseses} with sufficiently small $\delta$, there holds that
\begin{align}
&\label{11292030es}
\|u\|_{\underline{i},2}^2+\|\nabla p\|_{\underline{i},0}^2\lesssim\|\nabla\times w\|_{\underline{i},0}^2+\|u_t\|_{i}^2+\|u\|_{\underline{i},0}^2\quad\mathrm{for}\;\;0\leqslant i\leqslant1,\\[1mm]
&\label{12372030es}
\|u\|_{3}^2+\|\nabla p\|_{1}^2
\lesssim\|\nabla\times w\|_{1}^2+\|u_t\|_{1}^2+\|u\|_{0}^2.
\end{align}
\end{lem}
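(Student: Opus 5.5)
The plan is to apply standard Stokes regularity on the strip $\Omega=\mathbb{R}^2\times(0,1)$ to \eqref{n0101nn928nes}. The nonlinear term $\mathcal{M}^3$ inside $G$ will be absorbed through the smallness in \eqref{aprpioseses}. Horizontal derivatives commute with the Stokes operator and preserve both the boundary condition and $\mathrm{div}u=0$. This is what produces the tangential version \eqref{11292030es}.

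\emph{Step 1 (case $i=0$ of \eqref{11292030es}).} I will invoke the Stokes estimate (the analogue of \eqref{Ellipticestimate} stated in the appendix) in the form
\[
\|u\|_2+\|\nabla p\|_0\lesssim\|G\|_0+\|u\|_0 .
\]
The lower-order term $\|u\|_0$ is harmless; on the strip it can even be dropped by Poincaré. Then
\[
\|G\|_0\lesssim\|u_t\|_0+\|\nabla\times w\|_0+\|\mathcal{M}^3\|_0 .
\]
By \eqref{1227213738es} with $i=0$, $\|\mathcal{M}^3\|_0\lesssim\delta(\|u\|_1+\|u_t\|_0)$, which is absorbed once $\delta$ is small.

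\emph{Step 2 (case $i=1$).} I will apply $\partial_{\mathrm{h}}$ to \eqref{n0101nn928nes}. The pair $(\partial_{\mathrm{h}}u,\partial_{\mathrm{h}}p)$ solves the same Stokes problem with right-hand side $\partial_{\mathrm{h}}G$, zero Dirichlet data and zero divergence. The Stokes estimate then gives
\[
\|\partial_{\mathrm{h}}u\|_2+\|\nabla\partial_{\mathrm{h}}p\|_0\lesssim\|\partial_{\mathrm{h}}G\|_0+\|\partial_{\mathrm{h}}u\|_0 .
\]
Here $\|\partial_{\mathrm{h}}G\|_0\lesssim\|u_t\|_1+\|\partial_{\mathrm{h}}\nabla\times w\|_0+\|\mathcal{M}^3\|_1$. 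By \eqref{1227213738es} with $i=1$, $\|\mathcal{M}^3\|_1\lesssim\delta(\|u\|_2+\|u_t\|_1)$. The $\delta\|u\|_2$ piece is absorbed using Step 1. Summing with Step 1 yields \eqref{11292030es} for $i=1$. The $\|u_t\|_i$ term is taken in the full $H^i$ norm, as stated, so no refinement is needed there.

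\emph{Step 3 (\eqref{12372030es}).} I will use the higher-order Stokes estimate
\[
\|u\|_3+\|\nabla p\|_1\lesssim\|G\|_1+\|u\|_0 .
\]
This again follows from the appendix regularity theory on the flat strip, e.g.\ by combining tangential estimates with recovery of normal derivatives from the equations. Then
\[
\|G\|_1\lesssim\|u_t\|_1+\|\nabla\times w\|_1+\|\mathcal{M}^3\|_1 ,
\]
and, as in Step 2, $\|\mathcal{M}^3\|_1\lesssim\delta(\|u\|_2+\|u_t\|_1)\le\delta(\|u\|_3+\|u_t\|_1)$, which is absorbed.

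The main obstacle is making sure the nonlinear term contains no $q_t$ or $\nabla q$ contributions that are not small relative to the left-hand side. In the incompressible case there is no pressure–density coupling, and $\mathcal{M}^3$ involves only $u$, $u_t$ and $q$ multiplicatively, so \eqref{1227213738es} suffices. A secondary point is justifying the Stokes estimate on the unbounded strip. Since the paper takes \eqref{Ellipticestimate}-type results as given, I will cite the analogous Stokes estimate.
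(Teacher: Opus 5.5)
Your proposal is correct and follows the paper's proof. The paper applies the Stokes estimate \eqref{11190928} to \eqref{n0101nn928nes} (with $k=0,1$) and to the horizontally differentiated problem for $(\partial_{\mathrm{h}}u,\partial_{\mathrm{h}}p)$, bounding $\mathcal{M}^3$ by \eqref{1227213738es} and absorbing it via the smallness of $\delta$, exactly as in your three steps. The appendix already states this Stokes estimate, including the $k=1$ case you need in Step 3, so you can cite \eqref{11190928} directly rather than an ``analogue'' of \eqref{Ellipticestimate}.
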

\begin{pf}
Applying the Stokes estimate \eqref{11190928} to \eqref{n0101nn928nes} and using \eqref{1227213738es}, we have
\begin{align}\label{11293730es}
\|u\|_{2+i}^2+\|\nabla p\|_{i}^2&\lesssim\|u\|_{0}^2+\|G\|_{i}^2\nonumber\\[0.5mm]
&\lesssim\|u\|_{0}^2+\|(u_t,\nabla\times w)\|_{i}^2+\|(q,u)\|_{2}^2\big(\|u\|_{1+i}^2+\|u_t\|_{i}^2\big).
\end{align}

Similarly, the pair $(\partial_{\mathrm{h}}u,\partial_{\mathrm{h}}p)$ solves the following Stokes problem
\begin{equation*}%\label{n0101nn928nesbnm}
\begin{cases}
-(\mu+\chi)\Delta \partial_{\mathrm{h}}u+\nabla \partial_{\mathrm{h}}p= \partial_{\mathrm{h}}G\quad &\mbox{in } \Omega ,\\[0.5mm]
\mathrm{div} \partial_{\mathrm{h}}u=0 &\mbox{in } \Omega ,\\[0.5mm]
\partial_{\mathrm{h}}u=0 &\mbox{on } \partial\Omega,
\end{cases}
\end{equation*}
and enjoys the following estimate
\begin{align}\label{11293730ens}
\|\partial_{\mathrm{h}}u\|_{2}^2+\|\nabla \partial_{\mathrm{h}}p\|_{0}^2&\lesssim \|\partial_{\mathrm{h}}u\|_{0}^2 + \|\partial_{\mathrm{h}}G\|_{0}^2\nonumber\\[0.5mm]
&
\lesssim\|(u,u_t)\|_{1,0}^2+\|\nabla\times w\|_{1,0}^2+\|(q,u)\|_{2}^2\big(\|u\|_{2}^2+\|u_t\|_{1}^2\big).
\end{align}
Consequently, combining \eqref{11293730es}--\eqref{11293730ens} then leads to \eqref{11292030es}--\eqref{12372030es} for sufficiently small $\delta$.
\hfill$\Box$
\end{pf}

Next, we utilize the ODE structure of the linear perturbed system to recover the estimates on the normal derivatives of $w$.
We continue to denote
$$W=(W_1,W_2,W_3)^{\top}:=\nabla\times w.$$
Similarly to the derivation of \eqref{2026195637}--\eqref{2026195638}, and recalling the divergence-free condition $\mathrm{div}u=0$, we observe that
$(\partial_3^2 u_{\mathrm{h}},W_{\mathrm{h}})$ satisfies
\begin{equation}\label{2026195637es}
\begin{cases}
(\mu + \chi)\partial_{3}^2u_{\mathrm{h}}+2\chi W_{\mathrm{h}}
=\bar{\rho}\partial_tu_{\mathrm{h}}  +  \nabla_{\mathrm{h}} p- (\mu + \chi) \Delta_{\mathrm{h}} u_{\mathrm{h}}
-\mathcal{M}^3_{\mathrm{h}}:=\tilde{\mathcal{M}}_{\mathrm{h}}
 &\mbox{in } \Omega ,\\[1mm]
\bar{\rho}\big(\partial_{t}W_{\mathrm{h}}+u\cdot\nabla W_{\mathrm{h}} \big)+4\chi W_{\mathrm{h}}+2\chi\partial_{3}^2u_{\mathrm{h}}
=-2\chi\Delta_{\mathrm{h}} u_{\mathrm{h}}+\mathcal{M}^5_{\mathrm{h}}:=\tilde{\mathfrak{M}}_{\mathrm{h}} &\mbox{in } \Omega,
\end{cases}
\end{equation}
and that $W_{3}$ satisfies
\begin{equation}\label{2026195638es}
%\begin{cases}
%(2\mu + \chi)\partial_{3}^2u_{3}+2\chi W_{3}
%=\bar{\rho}\partial_tu_{3}  + {\color{red} \partial_{3} p} - (\mu + \chi) \Delta_{\mathrm{h}} u_{3} + \mathcal{M}^3_{3}:=\tilde{\mathcal{M}}_{3}
% &\mbox{in } \Omega ,\\[1mm]
\bar{\rho}\big(\partial_{t}W_{3}+u\cdot\nabla W_{3} \big)+4\chi W_{3}
=2\chi\big(\partial_3\mathrm{div}_{\mathrm{h}}u_{\mathrm{h}}-\Delta_{\mathrm{h}} u_{3}\big)+\mathcal{M}^5_{3}:=\tilde{\mathfrak{M}}_{3}\quad \mbox{in } \Omega.
%\end{cases}
\end{equation}

Following the same arguments as those for \eqref{202604101338nm}, we can see that $\mathrm{div}w$ satisfies
\begin{align}\label{202604101338nmes}
\bar{\rho}\big(\mathrm{div}w_{t}+u\cdot\nabla\mathrm{div}w \big)+4\chi\mathrm{div}w=\mathcal{M}^{6},
\end{align}
where $\mathcal{M}^{6}$ is defined in \eqref{20260412bes}.
We now present the following estimates for $W$ and $\mathrm{div}w$.
\begin{lem}\label{uwnormalgh}
Under assumption \eqref{aprpioseses} with sufficiently small $\delta$, there holds that, %for $0\leqslant i\leqslant 1$,
\begin{align}\label{202604102028gh}
&\frac{\mathrm{d}}{\mathrm{d}t}\widetilde{\|W\|}_{1}^2+c\|W\|_{1}^2
\lesssim \bar{\mathfrak{D}} +\|\nabla p\|_{1}\|u\|_{3}^{1/2}\|u\|_{\underline{2},1}^{1/2}
+\|(q,u,w)\|_{2}\mathfrak{D},\\[1mm]
&\label{20260411ndkl}
\frac{\mathrm{d}}{\mathrm{d}t}\|\mathrm{div}w\|_{1}^2+c\|\mathrm{div}w\|_{1}^2
\lesssim\|(q,u)\|_{2}\mathfrak{D},
\end{align}
where $\widetilde{\|W\|}_{1}^2$ is equivalent to $\|W\|_{1}^2$.
\end{lem}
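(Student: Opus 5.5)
We follow the compressible arguments of Lemmas \ref{uwnormal} and \ref{divwnormal}, with the pressure now handled by the Stokes estimates of Lemma \ref{lem:11292030es} rather than by an ODE structure.

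\emph{The estimate \eqref{20260411ndkl}.} This is the easy part, and the proof of Lemma \ref{divwnormal} transfers verbatim. We apply $\|\cdot\|_1^2$ to both sides of \eqref{202604101338nmes} and expand the square. The cross term equals
$$\bar\rho\,4\chi\,\frac{\mathrm d}{\mathrm dt}\|\mathrm{div}w\|_1^2$$
plus transport commutators, which are bounded by $\|u\|_3\|w\|_2^2\lesssim\|u\|_2\mathfrak D$. The source term is controlled by \eqref{12281530es}, which gives $\|\mathcal M^6\|_1^2\lesssim\|(q,u)\|_2\mathfrak D$.

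\emph{Plan for \eqref{202604102028gh}: $W_3$ and the horizontal system.} For $0\leqslant j\leqslant i\leqslant1$ we apply $\partial_{\mathrm h}^j\partial_3^{1-i}$ to the two systems.

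\textbf{$W_3$.} We test \eqref{2026195638es} with $\partial_{\mathrm h}^j\partial_3^{1-i}W_3$. This produces damping $4\chi\|\cdot\|_0^2$. The linear right-hand side $2\chi(\partial_3\mathrm{div}_{\mathrm h}u_{\mathrm h}-\Delta_{\mathrm h}u_3)$ under $\partial_3^{1-i}$ is at most $\|\partial_{\mathrm h}\nabla^{2-i}u\|_{\underline i,0}$. Using $\mathrm{div}u=0$, we rewrite $\partial_3^2\mathrm{div}_{\mathrm h}u_{\mathrm h}$ as a horizontal derivative of $\partial_3^2u_{\mathrm h}$, so this term is bounded by $\bar{\mathfrak D}+\epsilon\|\partial_{\mathrm h}^j\partial_3^{3-i}u_{\mathrm h}\|_0^2$. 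Note that $\partial_3^{3-i}$ with one horizontal derivative is counted through the $u_{\mathrm h}$ estimate below.

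\textbf{Horizontal part.} We test \eqref{2026195637es}$_1$ with $\partial_{\mathrm h}^j\partial_3^{3-i}u_{\mathrm h}$ and \eqref{2026195637es}$_2$ with $\partial_{\mathrm h}^j\partial_3^{1-i}W_{\mathrm h}$, then add. As in \eqref{20926504113339nm}, the completed square $\|\partial^{3}_3u_{\mathrm h}+2W_{\mathrm h}\|^2$ yields dissipation $\lambda(\|\partial_{\mathrm h}^j\partial_3^{1-i}W_{\mathrm h}\|_0^2+\|\partial_{\mathrm h}^j\partial_3^{3-i}u_{\mathrm h}\|_0^2)$. The terms $u_t$, $\Delta_{\mathrm h}u$ and $\mathcal M^3,\mathcal M^5$ are bounded by $\bar{\mathfrak D}$ and $\|(q,u,w)\|_2\mathfrak D$ via Lemma \ref{202606291049}. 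The transport and commutator terms are handled exactly as before.

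\emph{Main obstacle: the pressure term.} The difficulty is
$$\int\partial_{\mathrm h}^j\partial_3^{1-i}\nabla_{\mathrm h}p\cdot\partial_{\mathrm h}^j\partial_3^{3-i}u_{\mathrm h}\,\mathrm dx,$$
which we split into three cases.
\begin{itemize}
\item[(i)] For $i=1,j=0$, interpolation gives $\|\nabla p\|_0\|\nabla u\|_2^{1/2}\|\nabla u\|_0^{1/2}$. This is absorbed by Young's inequality together with \eqref{12372030es}.
\item[(ii)] For $i=j=1$, we integrate by parts in $x_3$ as in \eqref{2092650411539nnmm}. The boundary integral is bounded by the trace estimate \eqref{37190928}, which gives $\|\nabla p\|_1\|u\|_3^{1/2}\|u\|_{\underline2,1}^{1/2}$.
\item[(iii)] For $i=j=0$, the term is bounded by $\|\nabla\partial_{\mathrm h}p\|_0\|\partial_3^3u_{\mathrm h}\|_0$. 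Here we invoke \eqref{11292030es} with $i=1$:
$$\|\nabla\partial_{\mathrm h}p\|_0^2\lesssim\|W\|_{1,0}^2+\|u_t\|_1^2+\|u\|_{\underline1,0}^2.$$
After Young's inequality this leaves $C\|\partial_{\mathrm h}W\|_0^2$ on the right-hand side.
\end{itemize}

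\emph{Closing the estimate.} The leftover $C\|\partial_{\mathrm h}W\|_0^2$ from case (iii) is absorbed by multiplying the $i=1$ estimates by a large constant before adding the $i=0$ ones, mirroring the weights $c_{i,j}$ in \eqref{20260416nc}. The $u_t$ terms are part of $\bar{\mathfrak D}$. Finally, $\|\partial_3W_3\|=\|\mathrm{div}_{\mathrm h}W_{\mathrm h}\|$ by \eqref{01013630}, so combining the weighted sums gives a functional $\widetilde{\|W\|}_1^2$ that is equivalent to $\|W\|_1^2$ and satisfies \eqref{202604102028gh}.
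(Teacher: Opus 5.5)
Your proposal is correct and follows the paper's proof essentially step for step. It uses the same $\|\cdot\|_1^2$ ODE argument for $\mathrm{div}w$ and the same coupled testing of the horizontal system with the completed square. It also handles $\int\partial_{\mathrm h}^j\partial_3^{1-i}\nabla_{\mathrm h}p\cdot\partial_{\mathrm h}^j\partial_3^{3-i}u_{\mathrm h}\,\mathrm dx$ by the same three cases, invoking \eqref{11292030es} for $\|\nabla\partial_{\mathrm h}p\|_0$ when $i=j=0$, and closes with the same heavy weighting of the $i=1$ estimates.

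The differences are only bookkeeping. You control the $\partial_{\mathrm h}\partial_3^2u_{\mathrm h}$ term in the $i=j=0$ $W_3$-estimate through the $(i,j)=(1,1)$ horizontal dissipation, where the paper uses \eqref{11292030es}; this term then carries a large constant absorbed by the weights, not an $\epsilon$. Also, your transport bound $\|u\|_3\|w\|_2^2$ is really $\lesssim\|w\|_2\mathfrak D$, not $\|u\|_2\mathfrak D$. This is harmless under \eqref{aprpioseses}, and the paper's stated right-hand side $\|(q,u)\|_2\mathfrak D$ glosses over the same point.
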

\begin{pf}
For $0\leqslant j\leqslant i\leqslant 1$, applying $\partial_{\mathrm{h}}^{j}\partial_{3}^{1-i}$  to \eqref{2026195637es}$_1$ and \eqref{2026195637es}$_2$,
taking the inner product of the resulting identities with $\partial_{\mathrm{h}}^{j}\partial_{3}^{3-i}u_{\mathrm{h}}$ and $\partial_{\mathrm{h}}^{j}\partial_{3}^{1-i} W_{\mathrm{h}}$ in $L^2$, respectively,
integrating by parts with the aid of \eqref{0102nb}$_3$--\eqref{0102nb}$_4$, we arrive at
\begin{align}\label{2092650411338fg}
&\frac{1}{2}\frac{\mathrm{d}}{\mathrm{d}t}\int\bar{\rho}|\partial_{\mathrm{h}}^{j}\partial_{3}^{1-i}W_{\mathrm{h}}|^2\mathrm{d}x\nonumber\\[1mm]
&+(\mu + \chi)\int|\partial_{\mathrm{h}}^{j}\partial_{3}^{3-i}u_{\mathrm{h}}|^2\mathrm{d}x
+4\chi \int \partial_{\mathrm{h}}^{j}\partial_{3}^{1-i}W_{\mathrm{h}}\cdot\partial_{\mathrm{h}}^{j}\partial_{3}^{3-i}u_{\mathrm{h}}\mathrm{d}x
+4\chi \int |\partial_{\mathrm{h}}^{j}\partial_{3}^{1-i}W_{\mathrm{h}}|^2\mathrm{d}x\nonumber\\[1mm]
&=\int \partial_{\mathrm{h}}^{j}\partial_{3}^{1-i}\tilde{\mathcal{M}}_{\mathrm{h}}\cdot\partial_{\mathrm{h}}^{j}\partial_{3}^{3-i}u_{\mathrm{h}}\mathrm{d}x
+\int \partial_{\mathrm{h}}^{j}\partial_{3}^{1-i}\tilde{\mathfrak{M}}_{\mathrm{h}}\cdot\partial_{\mathrm{h}}^{j}\partial_{3}^{1-i}W_{\mathrm{h}}\mathrm{d}x\nonumber\\[1mm]
&\quad
-\int\bar{\rho} [\partial_{\mathrm{h}}^{j}\partial_{3}^{1-i},u]\cdot\nabla W_{\mathrm{h}}\cdot\partial_{\mathrm{h}}^{j}\partial_{3}^{1-i}W_{\mathrm{h}}\mathrm{d}x.
\end{align}
Following the same lines as the derivation of \eqref{20926504113339nm}, we can refine \eqref{2092650411338fg} into the form
\begin{align}\label{20926504113339nm12fg}
&\frac{1}{2}\frac{\mathrm{d}}{\mathrm{d}t}\int\bar{\rho}|\partial_{\mathrm{h}}^{j}\partial_{3}^{1-i}W_{\mathrm{h}}|^2\mathrm{d}x
+\lambda_{7}\big(\|\partial_{\mathrm{h}}^{j}\partial_{3}^{1-i}W_{\mathrm{h}}\|_{0}^2+\|\partial_{\mathrm{h}}^{j}\partial_{3}^{3-i}u_{\mathrm{h}}\|_{0}^2\big)
\nonumber\\[1mm]
&\leqslant\bigg|
\int \partial_{\mathrm{h}}^{j}\partial_{3}^{1-i}\tilde{\mathcal{M}}_{\mathrm{h}}\cdot\partial_{\mathrm{h}}^{j}\partial_{3}^{3-i}u_{\mathrm{h}}\mathrm{d}x
+\int \partial_{\mathrm{h}}^{j}\partial_{3}^{1-i}\tilde{\mathfrak{M}}_{\mathrm{h}}\cdot\partial_{\mathrm{h}}^{j}\partial_{3}^{1-i}W_{\mathrm{h}}\mathrm{d}x\qquad\qquad\quad\nonumber\\[1mm]
&\quad\;
-\int\bar{\rho} [\partial_{\mathrm{h}}^{j}\partial_{3}^{1-i},u]\cdot\nabla W_{\mathrm{h}}\cdot\partial_{\mathrm{h}}^{j}\partial_{3}^{1-i}W_{\mathrm{h}}\mathrm{d}x\bigg|:=\Xi
\end{align}
holds for some positive constant $\lambda_7$.

Employing H\"older's and Young's inequalities, the product estimate \eqref{product}, and \eqref{1227213738es}--\eqref{12276213738es}, we readily obtain the bound:
\begin{align}\label{2092650411539nmfg}
\Xi\lesssim\|\partial_{\mathrm{h}}^{j}\partial_{3}^{1-i}(u_{t},\partial_{\mathrm{h}}^2u)\|_{0}^2+\|(q,u,w)\|_{2}\mathfrak{D}
+\bigg|\int \partial_{\mathrm{h}}^{j}\partial_{3}^{1-i}\nabla_{\mathrm{h}}p\cdot\partial_{\mathrm{h}}^{j}\partial_{3}^{3-i}u_{\mathrm{h}}\mathrm{d}x\bigg|.
\end{align}
For the pressure term, adopting the same arguments as in \eqref{202604111539nm}--\eqref{20260411na} yields
\begin{align}\label{20926537nmfg}
\bigg|\int \partial_{\mathrm{h}}^{j}\partial_{3}^{1-i}\nabla_{\mathrm{h}}p\cdot\partial_{\mathrm{h}}^{i}\partial_{3}^{3-i}u_{\mathrm{h}}\mathrm{d}x\bigg|
\lesssim(1-i)\|\partial_3\partial_{\mathrm{h}}p\|_{0}\|\partial_3^{3}u_{\mathrm{h}}\|_{0}
+\|\nabla p\|_{1}\|u\|_{3}^{1/2}\|u\|_{\underline{2},1}^{1/2}.
\end{align}
Substituting \eqref{2092650411539nmfg}--\eqref{20926537nmfg} into \eqref{20926504113339nm12fg} thus yields
\begin{align}\label{20260415nh}
&\frac{1}{2}\frac{\mathrm{d}}{\mathrm{d}t}\int\bar{\rho}|\partial_{\mathrm{h}}^{j}\partial_{3}^{1-i}W_{\mathrm{h}}|^2\mathrm{d}x
+\lambda_{6}\big(\|\partial_{\mathrm{h}}^{j}\partial_{3}^{1-i}W_{\mathrm{h}}\|_{0}^2+\|\partial_{\mathrm{h}}^{j}\partial_{3}^{3-i}u_{\mathrm{h}}\|_{0}^2\big)\nonumber\\[1mm]
&\lesssim\|\partial_3^{1-i}(u_{t},\partial_{\mathrm{h}}^2 u)\|_{\underline{i},0}^2
+(1-i)\|\partial_3\partial_{\mathrm{h}}p\|_{0}\|\partial_3^{3}u_{\mathrm{h}}\|_{0}
+\|\nabla p\|_{1}\|u\|_{3}^{1/2}\|u\|_{\underline{2},1}^{1/2}
+\|(q,u,w)\|_{2}\mathfrak{D}.
\end{align}

Similarly, applying $\partial_{\mathrm{h}}^{j}\partial_{3}^{1-i}$ to \eqref{2026195638es}, taking the inner product of the resulting identity with $\partial_{\mathrm{h}}^{j}\partial_{3}^{1-i} W_{3}$ in $L^2$,
integrating by parts, and using \eqref{0102nb}$_3$--\eqref{0102nb}$_4$, we find that
\begin{align*}%\label{20926504153339nmfg}
&\frac{1}{2}\frac{\mathrm{d}}{\mathrm{d}t}\int\bar{\rho}|\partial_{\mathrm{h}}^{j}\partial_{3}^{1-i}W_{3}|^2\mathrm{d}x+4\chi\|\partial_{\mathrm{h}}^{j}\partial_{3}^{1-i}W_{3}\|_{0}^2
\nonumber\\[1mm]
&\leqslant\bigg|
\int \partial_{\mathrm{h}}^{j}\partial_{3}^{1-i}\tilde{\mathfrak{M}}_{3}\cdot\partial_{\mathrm{h}}^{j}\partial_{3}^{1-i}W_{3}\mathrm{d}x
-\int\bar{\rho} [\partial_{\mathrm{h}}^{j}\partial_{3}^{1-i},u]\cdot\nabla W_{3}\cdot\partial_{\mathrm{h}}^{j}\partial_{3}^{1-i}W_{3}\mathrm{d}x\bigg|:=\Upsilon.
\end{align*}
Moreover, the term $\Upsilon$ admits the following bound:
\begin{align*}%\label{20260415nmfg}
\Upsilon\lesssim\|\partial_{\mathrm{h}}^{j}\partial_{3}^{1-i}(u_{t},\nabla\partial_{\mathrm{h}}u)\|_{0}\|\partial_{\mathrm{h}}^{j}\partial_{3}^{1-i}W_{3}\|_0
+\|(q,u,w)\|_{2}\mathfrak{D}.
\end{align*}
Substituting this estimate back leads to
\begin{align}\label{202604151037nmfg}
&\frac{1}{2}\frac{\mathrm{d}}{\mathrm{d}t}\int\bar{\rho}|\partial_{\mathrm{h}}^{i}\partial_{3}^{1-i}W_{3}|^2\mathrm{d}x
+\lambda_8\|\partial_{\mathrm{h}}^{i}\partial_{3}^{1-i}W_{3}\|_{0}^2
\lesssim\|\partial_{3}^{1-i}(u_{t},\nabla\partial_{\mathrm{h}}u)\|_{\underline{i},0}^2+\|(q,u,w)\|_{2}\mathfrak{D}
\end{align}
holds for some positive constant $\lambda_8$.

Now, combining \eqref{20260415nh} with \eqref{202604151037nmfg}, summing over $j$ from $0$ to $i$,
and invoking \eqref{11292030es} together with Young's inequality, we arrive at
\begin{align}\label{202604171038gh}
&\frac{\mathrm{d}}{\mathrm{d}t}\widetilde{\|\partial_{3}^{1-i}W\|}_{\underline{i},0}^2
+\lambda_9\|\partial_{3}^{1-i}W\|_{\underline{i},0}^2\nonumber\\[1mm]
&\lesssim \bar{\mathfrak{D}}
+  (1-i)\|W\|_{\underline{1},0}^2
+\|\nabla p\|_{1}\|u\|_{3}^{1/2}\|u\|_{\underline{2},1}^{1/2}
+\|(q,u,w)\|_{2}\mathfrak{D}
\end{align}
for some positive constant $\lambda_9$,
where $\widetilde{\|\partial_{3}^{1-i}W\|}_{\underline{i},0}^2$ is equivalent to $\|\partial_{3}^{1-i}W\|_{\underline{i},0}^2$.
Consequently, based on \eqref{202604171038gh}, there exist positive constants $d_{i}$ and a positive constant $\lambda_{10}$, such that
\begin{align}\label{20260417nc}
&\frac{\mathrm{d}}{\mathrm{d}t}\sum_{i=0}^{1}d_{i}\widetilde{\|\partial_{3}^{1-i}W\|}_{\underline{i},0}^2
+\lambda_{10}\sum_{i=0}^{1}d_{i}\|\partial_{3}^{1-i}W\|_{\underline{i},0}^2
\lesssim\bar{\mathfrak{D}}
+\|\nabla p\|_{1}\|u\|_{3}^{1/2}\|u\|_{\underline{2},1}^{1/2}
+\|(q,u,w)\|_{2}\mathfrak{D}.
\end{align}

Defining the functional
$$\widetilde{\|W\|}_1^2:=\sum_{i=0}^{1}d_{i}\widetilde{\|\partial_{3}^{1-i}W\|}_{\underline{i},0}^2.$$
It is clear that this functional is equivalent to $\|W\|_{1}^2$. Thus, the desired estimate \eqref{202604102028gh} follows immediately from \eqref{20260417nc}.
Finally, following the same arguments as in the derivation of \eqref{20260411nd}, by applying $\|\cdot\|_1^2$ to \eqref{202604101338nmes} and utilizing \eqref{12281530es} along with \eqref{aprpioseses} yields \eqref{20260411ndkl}, which completes the proof of Lemma \ref{uwnormalgh}.
\hfill$\Box$
\end{pf}

%%%%%%%%%%%%%%%%%%%%%%%%%%%%%%%%%%%%%%%%%%%%%%%%%%%%%%%%%%%%%%%%%%%%%%%
\subsection{A priori energy estimate}
We first provide the equivalent estimate of $\mathfrak{E}$.

%%%%%%%%%%%%%%%%%%%%%%%%%%%%%%%%%%%%%%%%%%%%%%%%%%%%%%%%%%%%%%%%%%%%%%%

\begin{lem}\label{lem:260415}
Under assumption \eqref{aprpioseses} with sufficiently small $\delta$, it holds that
\begin{align}\label{2604150855}
\mathfrak{E}\;\;\mbox{is equivalent to}\;\;\|(u,w)\|_{2}^2.
\end{align}
\end{lem}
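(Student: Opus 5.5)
Since $\mathfrak{E}=\|(u,w)\|_{2}^2+\|\nabla p\|_{0}^2+\|u_t\|_{0}^2+\|w_t\|_{1}^2$, the lower bound $\|(u,w)\|_2^2\leqslant\mathfrak{E}$ is trivial. For the reverse inequality it suffices to show
\begin{align*}
\|\nabla p\|_{0}^2+\|u_t\|_{0}^2+\|w_t\|_{1}^2\lesssim\|(u,w)\|_{2}^2,
\end{align*}
in analogy with \eqref{201807311651} in the proof of Lemma \ref{lem:12281620}.

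The $w_t$ part comes directly from the linearized micro-rotation equation \eqref{0102nb}$_2$. It gives
\begin{align*}
\|w_t\|_1\lesssim\|w\|_1+\|\nabla\times u\|_1+\|\mathcal{M}^4\|_1 .
\end{align*}
By \eqref{12276213738es} with $i=1$ and \eqref{aprpioseses}, $\|\mathcal{M}^4\|_1\lesssim\delta(\|w\|_2+\|w_t\|_1)$. The $\delta\|w_t\|_1$ term is absorbed for small $\delta$, leaving $\|w_t\|_1\lesssim\|(u,w)\|_2$.

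The pair $(u_t,\nabla p)$ is the main obstacle, since $p$ has no evolution equation. Rewrite \eqref{0102nb}$_1$ as
\begin{align*}
\bar\rho u_t+\nabla p=f,\qquad f:=(\mu+\chi)\Delta u+2\chi\nabla\times w+\mathcal{M}^3 .
\end{align*}
Because $\mathrm{div}\,u_t=0$ in $\Omega$ and $u_t\cdot\vec n=0$ on $\partial\Omega$ (from $u|_{\partial\Omega}=0$), $u_t$ and $\nabla p$ are $L^2$-orthogonal. Testing with $u_t$ and with $\nabla p$ (equivalently, applying the Helmholtz projection) therefore gives
\begin{align*}
\|u_t\|_0^2+\|\nabla p\|_0^2\lesssim\|f\|_0^2 .
\end{align*}
Now $\|\Delta u\|_0+\|\nabla\times w\|_0\lesssim\|(u,w)\|_2$. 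By \eqref{1227213738es} with $i=0$, $\|\mathcal{M}^3\|_0\lesssim\delta(\|u\|_1+\|u_t\|_0)$, so $\delta\|u_t\|_0$ is absorbed into the left for small $\delta$. This yields $\|u_t\|_0+\|\nabla p\|_0\lesssim\|(u,w)\|_2$.

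Combining the $w_t$ and $(u_t,\nabla p)$ bounds gives the required reverse inequality, and hence the equivalence \eqref{2604150855}.
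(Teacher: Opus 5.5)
Your proof is correct. It differs from the paper's only in how $\|\nabla p\|_0$ is controlled.

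The $w_t$ bound is the paper's own argument: use \eqref{0102nb}$_2$ with \eqref{12276213738es} at $i=1$, then absorb $\delta\|w_t\|_1$. For $u_t$, the paper likewise tests \eqref{0102nb}$_1$ with $u_t$ and removes the pressure by integrating by parts with $\mathrm{div}\,u_t=0$ and $u_t|_{\partial\Omega}=0$.

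For the pressure, the paper first applies the Stokes estimate \eqref{11292030es} with $i=0$. This bounds $\|\nabla p\|_0^2$ by $\|\nabla\times w\|_0^2+\|u_t\|_0^2+\|u\|_0^2$, which reduces the lemma to bounding $u_t$ and $w_t$. You instead use the $L^2$-orthogonality of $u_t$ and $\nabla p$ a second time. This gives the identity $\bar\rho^2\|u_t\|_0^2+\|\nabla p\|_0^2=\|f\|_0^2$, so both bounds follow at once.

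Your route is more elementary and self-contained: Stokes regularity is unnecessary for this $L^2$-level statement, since the orthogonality the paper already relies on suffices. The paper's route just reuses Lemma \ref{lem:11292030es}, which it needs anyway for the dissipation estimates.

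You also correctly read the $\nabla q$ in \eqref{0102nb}$_1$ as $\nabla p$; the paper has a typo there.
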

\begin{pf}
Thanks to \eqref{11292030es} with $i=0$, we have the equivalence
$$\|(u,w)\|_2^2\lesssim  \mathfrak{E}\lesssim \|(u,w)\|_2^2+\|u_t\|_0^2+\|w_{t}\|_1^2.$$
Therefore,
to obtain \eqref{2604150855}, it suffices to verify
\begin{align}
\label{202604151637}
\|u_t\|_0^2+\|w_{t}\|_1^2\lesssim \|(u,w)\|_{2}^2.
\end{align}
To this end, we first invoke \eqref{0102nb}$_2$ and \eqref{12276213738es} with $i=1$ to deduce that
\begin{align*}
\|w_{t}\|_1^2\lesssim \|u\|_{2}^2+\|w\|_{1}^2+\|(q,u)\|_{2}^2\big(\|w\|_{2}^2+\|w_{t}\|_{1}^2\big),
\end{align*}
Under the smallness assumption \eqref{aprpioseses} on $\delta$, the higher-order term on the right-hand side can be absorbed, yielding
\begin{align}
\label{202604151638}
\|w_{t}\|_1^2\lesssim \|(u,w)\|_{2}^2.
\end{align}

We now turn to the estimate for $u_t$.
Taking the inner product of \eqref{0102nb}$_1$ with $u_t$ in $L^2$, integrating by parts over $\Omega$ and using \eqref{0102nb}$_3$--\eqref{0102nb}$_4$, we obtain
\begin{align*}
\int\bar{\rho} |u_t|^2\mathrm{d}x&=\int((\mu + \chi) \Delta u+2\chi \nabla \times w)\cdot u_t\mathrm{d}x+\int\mathcal{M}^3\cdot u_t\mathrm{d}x\nonumber\\[1mm]
&\lesssim\big(\|u\|_{2}+\|w\|_1+\|\mathcal{M}^3\|_{0}\big)\|u_t\|_0.
\end{align*}
Then, applying \eqref{1227213738es} with $i=0$ and \eqref{aprpioseses}, along with Young's inequality, leads to
\begin{align}\label{12052037}
\|u_{t}\|_{0}^2\lesssim\|u\|_{2}^2+\|w\|_1^2.
\end{align}
Combining \eqref{202604151638} and \eqref{12052037} yields the desired estimate \eqref{202604151637}, which completes the proof.
\hfill$\Box$
\end{pf}

Thanks to Proposition \ref{pro12281500es} and Lemmas \ref{lem:11292030es}--\ref{lem:260415} above, we now establish the following energy inequality.
\begin{pro}
\label{pro:0502n}
Under assumption \eqref{aprpioses} with sufficiently small $\delta$,
there exists a function $\tilde{\mathfrak{E}}$, which is equivalent to $\mathfrak{E}$, such that
\begin{align}   \label{emdsldsn37}
\frac{\mathrm{d}}{\mathrm{d}t} \tilde{\mathfrak{E}}+c\mathfrak{D}\leqslant0.
\end{align}
Additionally, it holds that
\begin{align}
\label{01021117n37}
\tilde{\mathfrak{E}}\lesssim\mathfrak{D}.
\end{align}
\end{pro}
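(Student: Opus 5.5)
We combine Proposition \ref{pro12281500es}, Lemma \ref{lem:11292030es} and Lemma \ref{uwnormalgh}, mirroring the proof of Proposition \ref{pro:0501n}. First we collect the dissipative pieces. The tangential estimate \eqref{12281503nmnmes} gives $\bar{\mathfrak{D}}$. Estimate \eqref{202604102028gh} gives $\|W\|_1^2$, with the errors $\bar{\mathfrak{D}}$ and $\|\nabla p\|_{1}\|u\|_{3}^{1/2}\|u\|_{\underline{2},1}^{1/2}$. Estimate \eqref{20260411ndkl} gives $\|\mathrm{div}w\|_1^2$. We then set
$$\mathfrak{E}_1:=\widetilde{\|W\|}_{1}^2+\|\mathrm{div}w\|_{1}^2+c_6\tilde{\bar{\mathfrak{E}}}$$
with $c_6$ large. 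The $\bar{\mathfrak{D}}$ on the right of \eqref{202604102028gh} is then absorbed by $c_6\bar{\mathfrak{D}}$, leaving
$$\frac{\mathrm{d}}{\mathrm{d}t}\mathfrak{E}_1+\lambda\big(\|W\|_1^2+\|\mathrm{div}w\|_1^2+\bar{\mathfrak{D}}\big)\lesssim \|\nabla p\|_{1}\|u\|_{3}^{1/2}\|u\|_{\underline{2},1}^{1/2}+\|(q,u,w)\|_{2}\mathfrak{D}.$$

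Next we recover the full dissipation $\mathfrak{D}$ from the left-hand side. By \eqref{12372030es}, $\|u\|_3^2+\|\nabla p\|_1^2\lesssim \|W\|_1^2+\|u_t\|_1^2+\|u\|_0^2$, and the last two terms are contained in $\bar{\mathfrak{D}}$. The horizontal and normal parts of $w$ are handled by the div--curl decomposition $\|w\|_2\lesssim\|w\|_{\underline{2},0}+\|W\|_1+\|\mathrm{div}w\|_1$, where $\|w\|_{\underline{2},0}$ and $\|w_t\|_{0}$ come from $\bar{\mathfrak{D}}$. The remaining part of $\|w_t\|_1$ is bounded through \eqref{0102nb}$_2$ and \eqref{12276213738es} by $\|u\|_2+\|w\|_1$ plus small terms. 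Hence
$$\mathfrak{D}\lesssim \|W\|_1^2+\|\mathrm{div}w\|_1^2+\bar{\mathfrak{D}}+\|(q,u)\|_2\mathfrak{D}.$$
The pressure cross term is then handled by Young's inequality:
$$\|\nabla p\|_{1}\|u\|_{3}^{1/2}\|u\|_{\underline{2},1}^{1/2}\leqslant \epsilon(\|\nabla p\|_1^2+\|u\|_3^2)+C_\epsilon\|u\|_{\underline{2},1}^2,$$
where $\|u\|_{\underline{2},1}^2\leqslant\bar{\mathfrak{D}}$. We take $\epsilon$ small and then enlarge the weight $c_6$ on $\tilde{\bar{\mathfrak{E}}}$ so that $C_\epsilon\bar{\mathfrak{D}}$ is absorbed; this is the step needing care in the ordering of constants. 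Finally the smallness $\delta$ absorbs $\|(q,u,w)\|_2\mathfrak{D}$, which yields \eqref{emdsldsn37} with $\tilde{\mathfrak{E}}:=\mathfrak{E}_1$.

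For equivalence, $\tilde{\mathfrak{E}}$ is comparable to $\|W\|_1^2+\|\mathrm{div}w\|_1^2+\bar{\mathfrak{E}}$. This controls $\|(u,w)\|_{\underline{2},0}^2+\|w\|_2^2+\|u_t\|_0^2$. Lemma \eqref{11292030es} with $i=0$ and $i=1$ then gives $\|u\|_{2}^2\lesssim \|u\|_{\underline{1},2}^2\lesssim \|W\|_{1}^2+\|u_t\|_1^2+\dots$. This is the main obstacle: $\|u_t\|_1$ is not an energy quantity. To avoid it, I would use \eqref{11293730es} with $i=0$, giving $\|u\|_2^2+\|\nabla p\|_0^2\lesssim \|u\|_0^2+\|u_t\|_0^2+\|W\|_0^2$, all of which are controlled by $\tilde{\mathfrak{E}}$. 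Together with Lemma \ref{lem:260415}, this shows $\tilde{\mathfrak{E}}\sim\mathfrak{E}\sim\|(u,w)\|_2^2$.

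The bound \eqref{01021117n37} then follows from a Friedrichs-type argument. Because $u|_{\partial\Omega}=0$, inequality \eqref{friedrich} gives $\|u\|_0\lesssim\|\partial_3u\|_0$, so $\|u\|_2\lesssim\|u\|_3$. In the strip the energy contains no derivative-free quantity lacking dissipation: $\|w\|_2$, $\|u_t\|_0$ and $\|w_t\|_1$ are all bounded by $\mathfrak{D}$, and $\|\nabla p\|_0\leqslant\|\nabla p\|_1$. Hence $\tilde{\mathfrak{E}}\lesssim\mathfrak{E}\lesssim\mathfrak{D}$.
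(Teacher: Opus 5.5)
Your proposal is correct and follows the paper's proof essentially step for step. You combine \eqref{12281503nmnmes} with Lemma \ref{uwnormalgh}, recover $\mathfrak{D}$ via \eqref{12372030es} and the div--curl bound for $w$, and split the pressure term by Young's inequality with $\epsilon$ fixed before enlarging the tangential weight (the paper adds a second weight $c_7$ where you simply enlarge $c_6$). You then obtain the equivalence from the Stokes estimate at $i=0$ together with Lemma \ref{lem:260415}, and for \eqref{01021117n37} the Friedrichs step is unnecessary, since each term of $\mathfrak{E}$ is dominated termwise by a term of $\mathfrak{D}$.
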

\begin{pf}
To begin with, combining \eqref{12281503nmnmes} with \eqref{202604102028gh}--\eqref{20260411ndkl}, we deduce that
there exist a constant $\lambda_{11}>0$ and a suitably large constant $c_6$ such that
\begin{align}   \label{202604171036}
\frac{\mathrm{d}}{\mathrm{d}t} \mathfrak{E}_1+\lambda_{11}\mathfrak{D}_1\lesssim
\|\nabla p\|_{1}\|u\|_{3}^{1/2}\|u\|_{\underline{2},1}^{1/2}+\|(q,u,w)\|_{2}\mathfrak{D},
\end{align}
where we have defined that
\begin{align*}
&\mathfrak{E}_1:=\widetilde{\|W\|}_{1}^2+\|\mathrm{div}w\|_{1}^2+c_6\tilde{\bar{\mathfrak{E}}},
\quad\mbox{and}\quad\mathfrak{D}_1:=\|W\|_{1}^2+\|\mathrm{div}w\|_{1}^2+c_6\bar{\mathfrak{D}}.
\end{align*}
By virtue of \eqref{11292030es}--\eqref{12372030es} and \eqref{friedrich}, we have the following bounds:
\begin{align*}
&%\label{11292038es}
\|u\|_{2}^2+\|\nabla p\|_{0}^2\lesssim\|W\|_{0}^2+\bar{\mathfrak{E}},\\[1mm]
&%\label{12372038es}
\|u\|_{3}^2+\|\nabla p\|_{1}^2\lesssim\|W\|_{1}^2+\bar{\mathfrak{D}}.
\end{align*}
Furthermore, we also have
\begin{align*}
&%\label{01013630}
\|w\|_{2}\lesssim\|w\|_{\underline{2},0}+\|W\|_{1}+\|\mathrm{div}w\|_{1}.
\end{align*}
Collecting these estimates together with \eqref{2604150855} and \eqref{202604151638}, we conclude that the following equivalence and bounding relations hold:
\begin{align}
&\label{04171507}
\mathfrak{E}_1,\;\mathfrak{E}\;\mbox{and}\;\|(u,w)\|^2_2
\;\;\mbox{are equivalent},\\[1mm]
&\label{04171455}
\mathfrak{D}_1\;\;\mbox{is equivalent to}\;\;\mathfrak{D}\\[1mm]
&\label{01012021}
\tilde{\bar{\mathfrak{E}}},\;\mathfrak{E}_1\lesssim\mathfrak{D}.
\end{align}
Now, summing \eqref{12281503nmnmes} and \eqref{202604171036}, and invoking Young's inequality together with the smallness condition \eqref{aprpioseses}, we can find a suitably small constant $\lambda_{12}> 0$ and a suitably large constant $c_7$  such that
\begin{align}   \label{041717nml30}
\frac{\mathrm{d}}{\mathrm{d}t} \tilde{\mathfrak{E}}+\lambda_{12}(\mathfrak{D}_1+c_7\bar{\mathfrak{D}})\leqslant0,
\end{align}
where $\tilde{\mathfrak{E}}$ is defined by
\begin{align*}
&\tilde{\mathfrak{E}}:=\mathfrak{E}_1+c_7\tilde{\bar{\mathfrak{E}}}.
\end{align*}
It is evident from \eqref{04171507} that $\tilde{\mathfrak{E}}$ is also equivalent to $\mathfrak{E}$.
Consequently, the desired inequalities \eqref{emdsldsn37}--\eqref{01021117n37} follow immediately from \eqref{041717nml30} combined with \eqref{04171455}--\eqref{01012021}. This completes the proof.
\hfill$\Box$
\end{pf}

%%%%%%%%%%%%%%%%%%%%%%%%%%%%%%%%%%%%%%%%%
We are now in a position to derive the following \emph{a priori} energy estimate, which combines with the well-posedness result and a continuity argument,
yields Theorem \ref{thm2}.
\begin{thm}\label{thm2nm}
There exists a sufficiently small constant $\delta_2\in(0,1)$ such that if \eqref{aprpioseses} holds for $\delta\leqslant\delta_2$, then
\begin{align}
&\label{emdsldsa37nmn}
e^{ct}\mathfrak{E}(t)+\int_0^{t}e^{ct/2}\mathfrak{D}(\tau)\mathrm{d}\tau\lesssim \|(u^0,w^0)\|_{2}^2,\\[2mm]
&\label{emdsldsa38nmn}
\mathscr{E}(t)\lesssim \|( q^0, u^0, w^0)\|_{2}^2
\end{align}
hold for all $t\in[0,T]$.
\end{thm}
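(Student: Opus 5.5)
The plan is to integrate the differential inequality of Proposition \ref{pro:0502n} with an exponential weight, and then to treat the density perturbation $q$ separately through the transport equation \eqref{0102n}$_1$.

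\textbf{Step 1: exponential decay of $\tilde{\mathfrak{E}}$.} By \eqref{01021117n37} there is a constant $c_8>0$ with $\tilde{\mathfrak{E}}\leqslant c_8\mathfrak{D}$. Splitting the dissipation in \eqref{emdsldsn37} into two halves, we obtain
\[
\frac{\mathrm{d}}{\mathrm{d}t}\tilde{\mathfrak{E}}+\frac{c}{2c_8}\tilde{\mathfrak{E}}+\frac{c}{2}\mathfrak{D}\leqslant 0.
\]
Set $\kappa:=c/(2c_8)$. Multiplying by $e^{\kappa t}$ gives
\[
\frac{\mathrm{d}}{\mathrm{d}t}\big(e^{\kappa t}\tilde{\mathfrak{E}}\big)+\frac{c}{2}e^{\kappa t}\mathfrak{D}\leqslant 0.
\]
Integrating over $(0,t)$ yields
\[
e^{\kappa t}\tilde{\mathfrak{E}}(t)+\int_0^t e^{\kappa\tau}\mathfrak{D}\,\mathrm{d}\tau\lesssim\tilde{\mathfrak{E}}(0).
\]
We then convert back to the original functionals. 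Proposition \ref{pro:0502n} gives $\tilde{\mathfrak{E}}\sim\mathfrak{E}$, and Lemma \ref{lem:260415} gives $\mathfrak{E}\sim\|(u,w)\|_2^2$; applying both at $t=0$ yields $\tilde{\mathfrak{E}}(0)\lesssim\|(u^0,w^0)\|_2^2$. Taking the constant in \eqref{emdsldsa37nmn} equal to $\kappa$ (and using $e^{\kappa\tau/2}\leqslant e^{\kappa\tau}$) proves \eqref{emdsldsa37nmn}. A point to check is that Lemma \ref{lem:260415} applies at $t=0$. The time derivatives $u_t(0)$ and $w_t(0)$ are defined through the equations, so the bound \eqref{202604151637} holds at $t=0$ under the a priori assumption, exactly as at positive times.

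\textbf{Step 2: the bound on $q$.} It remains to show $\|q(t)\|_2^2\lesssim\|(q^0,u^0,w^0)\|_2^2$. Apply $\partial^\beta$ with $|\beta|\leqslant2$ (spatial derivatives only) to $q_t+u\cdot\nabla q=0$, test with $\partial^\beta q$, and integrate by parts. The boundary term vanishes because $u|_{\partial\Omega}=0$, and $\mathrm{div}u=0$ removes the term $\int \mathrm{div}u\,|\partial^\beta q|^2$. What remains are commutator terms $\int[\partial^\beta,u]\cdot\nabla q\,\partial^\beta q$. Using the product estimate and the embedding $H^2\hookrightarrow L^\infty$ (so that $\|\nabla u\|_{L^\infty}\lesssim\|u\|_3$), these are bounded by $\|u\|_3\|q\|_2^2$. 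This gives
\[
\frac{\mathrm{d}}{\mathrm{d}t}\|q\|_2^2\lesssim\|u\|_3\|q\|_2^2\lesssim\sqrt{\mathfrak{D}}\,\|q\|_2^2.
\]
Gronwall's inequality then yields
\[
\|q(t)\|_2^2\leqslant\|q^0\|_2^2\exp\Big(C\int_0^t\sqrt{\mathfrak{D}}\,\mathrm{d}\tau\Big).
\]

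\textbf{Main obstacle.} The key point is the time-integrability of $\sqrt{\mathfrak{D}}$, and this is where the exponential weight of Step 1 is essential; plain integrability $\int_0^t\mathfrak{D}<\infty$ would not suffice. By Cauchy--Schwarz,
\[
\int_0^t\sqrt{\mathfrak{D}}\,\mathrm{d}\tau\leqslant\Big(\int_0^t e^{-\kappa\tau/2}\mathrm{d}\tau\Big)^{1/2}\Big(\int_0^t e^{\kappa\tau/2}\mathfrak{D}\,\mathrm{d}\tau\Big)^{1/2}\lesssim\|(u^0,w^0)\|_2\lesssim\sqrt{\delta}.
\]
This is bounded uniformly in $t$, so the exponential factor is bounded by a constant. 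Hence $\|q(t)\|_2^2\lesssim\|q^0\|_2^2$. Adding this to \eqref{emdsldsa37nmn}, and using $\mathscr{E}=\|q\|_2^2+\mathfrak{E}$ with $\mathfrak{E}(t)\lesssim e^{-\kappa t}\|(u^0,w^0)\|_2^2$, gives \eqref{emdsldsa38nmn}. The only delicate part of the transport estimate is the highest-order commutator term, for example $\partial^2 u\,\nabla q$. I expect to handle it in the standard way, as $\|\partial^2u\|_{L^4}\|\nabla q\|_{L^4}\lesssim\|u\|_3\|q\|_2$ on the strip.
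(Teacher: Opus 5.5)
Your proposal is correct and follows the paper's proof essentially step by step. You get exponential decay of $\tilde{\mathfrak{E}}$ from \eqref{emdsldsn37}--\eqref{01021117n37}, then apply Gronwall to $\|q\|_2^2$ via the transport equation, bounding $\int_0^t\|u\|_3\,\mathrm{d}\tau$ by Cauchy--Schwarz against the exponential weight. The only cosmetic difference is that you obtain the decay and the weighted dissipation integral in one pass by splitting the dissipation in half, whereas the paper uses two passes with separate rates $\lambda_{13}$ and $\lambda_{14}$.
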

\begin{pf}
By virtue of \eqref{01012021},
it follows from \eqref{emdsldsn37} that there exist a sufficiently small constant $\tilde{\delta}_2\in(0,1)$ and a constant $\lambda_{13}> 0$ such that,
for any $\delta\leqslant \tilde{\delta}_2$,
\begin{align*}   %\label{emdsldsn15n37}
\frac{\mathrm{d}}{\mathrm{d}t} \tilde{\mathfrak{E}}+\lambda_{13}\tilde{\mathfrak{E}}\leqslant0,
\end{align*}
which, together with \eqref{04171507}, implies
\begin{align}
&\label{estemalasNnn}
\mathfrak{E}\lesssim\tilde{\mathfrak{E}}\lesssim\|(u^0,w^0)\|_{2}^2e^{-\lambda_{13}  t}.
\end{align}

Furthermore, we can deduce from \eqref{emdsldsn37} and \eqref{04171455}--\eqref{01012021} that there exist a sufficiently small constant $\hat{\delta}_2\in(0,1)$, a constant $\lambda_{14}> 0$ and a suitably large constant $c_8$ such that, for any $\delta\leqslant \hat{\delta}_2$,
\begin{align*}%\label{01021105n}
&\frac{\mathrm{d}}{\mathrm{d}t}\left(e^{\lambda_{14}t}\tilde{\mathfrak{E}}\right)
+ce^{\lambda_{14}t}\mathfrak{D}
\leqslant \lambda_{14}e^{\lambda_{14}t}\tilde{\mathfrak{E}}\leqslant c_{8}\lambda_{14}e^{\lambda_{14}t}\mathfrak{D}.
\end{align*}
By choosing $\lambda_{14}$ sufficiently small such that $\lambda_{14}<\lambda_{13}/2$ and $c_{8}\lambda_{14}<c/2$, we then obtain the time-decay estimate:
\begin{align}\label{01021105nn}
&e^{\lambda_{14}t}\mathfrak{E}+\int_{0}^{t}e^{\lambda_{14}\tau}\mathfrak{D}(\tau)\mathrm{d}\tau\lesssim \|(u^0,w^0)\|_{2}^2.
\end{align}

Additionally, invoking \eqref{0102n}$_1$ and \eqref{0102n}$_3$--\eqref{0102n}$_4$, we observe that
\begin{align}\label{04171205nn}
&\frac{1}{2}\frac{\mathrm{d}}{\mathrm{d}t}\|q\|_{2}^2%=\sum_{\alpha\in\mathbb{N}^3,\;|\alpha|=0}^2\int|\partial^{\alpha}q_t\partial^{\alpha}q|\mathrm{d}x
\lesssim\sum_{\alpha\in\mathbb{N}^3,\;|\alpha|=0}^2\int\left|[\partial^{\alpha},u]\cdot\nabla q\partial^{\alpha}q\right|\mathrm{d}x\lesssim\|q\|_{2}^2\|u\|_{3}.
\end{align}
Applying Gronwall's inequality to \eqref{04171205nn} and exploiting the bound in \eqref{01021105nn}, we find that there exists a sufficiently small constant $\bar{\delta}_2\in(0,1)$ such that, for any $\delta\leqslant \bar{\delta}_2$,
\begin{align}\label{04171236nn}
&\|q\|_{2}^2\lesssim\|q^0\|_{2}^2e^{\sqrt{\int_{0}^{t}e^{-\lambda_{14}\tau/2}\mathrm{d}\tau\int_{0}^{t}e^{\lambda_{14}\tau/2}\|u(\tau)\|_{3}^2\mathrm{d}\tau}}
\lesssim\|q^0\|_{2}^2e^{c\|(u^0, w^0)\|_{2}}\lesssim\|q^0\|_{2}^2.
\end{align}
Consequently, collecting \eqref{estemalasNnn}, \eqref{01021105nn} and \eqref{04171236nn}, and taking $\delta\leqslant \delta_2:=\min\{\tilde{\delta}_2, \hat{\delta}_2,\bar{\delta}_2\}$,  we arrive at \eqref{emdsldsa37nmn} and \eqref{emdsldsa38nmn} for $\delta\leqslant\delta_2$. This completes the proof.
\hfill$\Box$
\end{pf}

\vspace{5mm}
\noindent\textbf{Acknowledgements.}
The research of Youyi Zhao was supported by NSFC (12371233 and 12401289),
the Natural Science Foundation of Fujian Province of China (2024J08029),
and the Research Foundation of Fuzhou University (XRC-24050).
The author is deeply grateful to Prof. Fei Jiang for his useful discussions.

\vspace{5mm}
%%%%%%%%%%%%%%%%%%%%%%%%%%%%%%%%%%%%%%%%%%%%%%%%%%%%%%
\noindent\textbf{Conflict of Interest.}
The author states that there is no conflict of interest.

\vspace{5mm}

%%%%%%%%%%%%%%%%%%%%%%%%%%%%%%%%%%%%%%%%%%%%%%%%%%%%%%%%%%%%%%%%%%%%%%%%%%%%%%%%%%%%%%%%%%%%%%%%%%%%%%%%%%
\appendix
\section{Analysis tools}\label{sec:09}
%\section{Appendix}\label{appendix}
\renewcommand\thesection{A}
This Appendix is aims to provide some mathematical analysis tools, which have been used in the previous sections.
It should be noted that in this appendix we still adapt the simplified mathematical notations in Section \ref{main results}.
For the sake of simplicity, we still use the notation $a\lesssim b$ means that $a\leqslant cb$ for some constant $c>0$,
where the positive constant $c$ may depend on the physical parameters, but does not depend on the initial data or time, and may vary from line to line.

\begin{lem}\label{lem:10220826}
Embedding inequalities (see \cite[Theorem 4.12]{ARAJJFF}):
\begin{align}
&\label{embed237}
\|f\|_{L^p}\lesssim \| f\|_{1}\quad\mathrm{for}\;\;2\leqslant p\leqslant6,\\[1mm]
&\label{embed2}
\|f\|_{C^0(\bar{\Omega})}=\|f\|_{L^\infty}\lesssim \| f\|_{2}.
\end{align}
\end{lem}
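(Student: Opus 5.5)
The plan is to reduce both inequalities to the fundamental theorem of calculus along segments, followed by Cauchy--Schwarz/Hölder and a covering of the strip by unit cubes. The key point is that the strip $\Omega=\mathbb{R}^2\times(0,1)$ satisfies the uniform cone condition. Equivalently, it is a uniform Lipschitz domain which is a union of translates of the unit cube $K=(0,1)^3$.

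For \eqref{embed237}, the first step is to use a bounded extension operator $E:H^1(\Omega)\to H^1(\mathbb{R}^3)$ obtained by even reflection across $x_3=0$ and $x_3=1$ combined with a cutoff in $x_3$. Concretely, I would reflect $f$ to $(-1,2)$ and multiply by a smooth $\eta(x_3)$ equal to $1$ on $[0,1]$ and supported in $(-1,2)$. This gives $\|Ef\|_{H^1(\mathbb{R}^3)}\lesssim\|f\|_1$, with a constant independent of anything but $\eta$. The whole-space Gagliardo--Nirenberg--Sobolev inequality then gives $\|Ef\|_{L^6}\lesssim\|\nabla Ef\|_{L^2}$, and interpolating between $L^2$ and $L^6$ by Hölder yields $\|f\|_{L^p}\le\|Ef\|_{L^p}\lesssim\|f\|_1$ for $2\le p\le6$.

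For \eqref{embed2} I would reuse the same extension, which is bounded $H^2(\Omega)\to H^2(\mathbb{R}^3)$. Even reflection preserves $H^2$ only up to first-order matching, but normal derivatives jump in sign. Because of this I would instead use the higher-order reflection $f(x_{\mathrm h},-x_3)\mapsto 3f(x_{\mathrm h},-x_3)-2f(x_{\mathrm h},-2x_3)$ (and the analogous one near $x_3=1$), which matches derivatives up to order one. The Fourier bound
\begin{equation*}
\|g\|_{L^\infty(\mathbb{R}^3)}\le(2\pi)^{-3}\|\hat g\|_{L^1}\le(2\pi)^{-3}\big\|(1+|\xi|^2)^{-1}\big\|_{L^2}\,\big\|(1+|\xi|^2)\hat g\big\|_{L^2}\lesssim\|g\|_{H^2},
\end{equation*}
which uses $(1+|\xi|^2)^{-1}\in L^2(\mathbb{R}^3)$, then gives $\|f\|_{L^\infty}\lesssim\|f\|_2$. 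Continuity up to $\bar\Omega$ follows because $\hat g\in L^1$ implies that $g$ is uniformly continuous. As an alternative, one could avoid extensions entirely by working cube by cube on $K+k$, $k\in\mathbb{Z}^2\times\{0\}$, applying the bounded-domain embeddings on the fixed cube (with a constant independent of $k$ by translation invariance). For $L^\infty$ one takes the supremum over $k$ and bounds it by $\sup_k\|f\|_{H^2(K+k)}\le\|f\|_2$. For $L^p$ with $p\ge2$ one sums $\|f\|_{L^p(K+k)}^p\lesssim\|f\|_{H^1(K+k)}^p\le\|f\|_1^{p-2}\|f\|_{H^1(K+k)}^2$ over $k$.

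The main obstacle is verifying that the extension is bounded in $H^2$, where the reflection coefficients must make $\partial_3$ continuous across the boundary. The cube-covering route sidesteps this, so I would present that version, citing the bounded-domain Sobolev embedding on the unit cube from \cite{ARAJJFF}.
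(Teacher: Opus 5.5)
Your proof is correct, but it takes a different route from the paper, which gives no argument and simply cites the general Sobolev imbedding theorem (Adams--Fournier, Theorem 4.12). Part I of that theorem, valid on any domain with the cone condition, gives $H^1(\Omega)\hookrightarrow L^p(\Omega)$ for $2\leqslant p\leqslant 6$. Part II, valid on domains with the strong local Lipschitz property (which the strip has), gives $H^2(\Omega)\hookrightarrow C^0(\bar\Omega)$.

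You instead reduce to settings where the uniformity of the constant over the unbounded strip is evident. One option is to pass to $\mathbb{R}^3$ via an extension operator, then use the whole-space Gagliardo--Nirenberg inequality and the bound $\|\hat g\|_{L^1}\lesssim\|g\|_{H^2}$. The other is to pass to the unit cube via the disjoint translates $K+k$. There translation invariance fixes the constant, and the inequality $\sum_k a_k^p\leqslant(\sup_k a_k)^{p-2}\sum_k a_k^2$ for $p\geqslant 2$ reassembles the $L^p$ norm. The citation is shorter and covers far more general domains. Your cube reduction needs only the bounded-domain embedding and shows explicitly why the horizontal unboundedness costs nothing.

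A few points to tidy up:

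The opening sentence announces a proof by the fundamental theorem of calculus along segments, which you never carry out; drop it.

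The claim that the even-reflection extension is bounded $H^2\to H^2$ is false, as you then observe yourself. Even reflection matches only the zeroth-order trace, and $\partial_3$ changes sign across the boundary.

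In the higher-order reflection $3f(x_{\mathrm{h}},-x_3)-2f(x_{\mathrm{h}},-2x_3)$ you need $-2x_3\in(0,1)$. So it is defined only for $x_3\in(-1/2,0)$ (symmetrically $(1,3/2)$ near the top), and the cutoff must be supported in $(-1/2,3/2)$ rather than $(-1,2)$.

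Finally, \eqref{embed2} also asserts that $f$ is continuous on $\bar\Omega$. In the cube version, add a line showing that the continuous representatives on adjacent closed cubes agree on their common face; their traces coincide because $f\in H^1(\Omega)$. Alternatively, use overlapping boxes of fixed size instead of disjoint cubes, so continuity on $\bar\Omega$ is inherited directly.
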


\begin{lem}\label{10220830}
Friedrichs inequality (see \cite[Theorem 1.42]{NASII04}):
\begin{align}
\label{friedrich}
\|f\|_{0}\lesssim \|\nabla f\|_{0}\quad\mathrm{ for }\;f\in H^1_0.
\end{align}
\end{lem}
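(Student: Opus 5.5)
The Friedrichs inequality on the strip $\Omega=\mathbb{R}^2\times(0,1)$ reduces to a one-dimensional Poincaré inequality in the bounded vertical direction. I would prove it for $f\in C_c^\infty(\Omega)$ first and then pass to $H^1_0$ by density. The density of $C_c^\infty(\Omega)$ in $H_0^1$, with $H_0^1$ defined by vanishing trace, is standard for Lipschitz, here flat, boundaries.

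For smooth compactly supported $f$ and fixed $x_{\mathrm{h}}\in\mathbb{R}^2$, the function $x_3\mapsto f(x_{\mathrm{h}},x_3)$ vanishes at $x_3=0$. The fundamental theorem of calculus gives
\begin{align*}
f(x_{\mathrm{h}},x_3)=\int_0^{x_3}\partial_3 f(x_{\mathrm{h}},s)\,\mathrm{d}s .
\end{align*}
By Cauchy--Schwarz, and since $x_3\leqslant 1$,
\begin{align*}
|f(x_{\mathrm{h}},x_3)|^2\leqslant x_3\int_0^1|\partial_3 f(x_{\mathrm{h}},s)|^2\,\mathrm{d}s\leqslant \int_0^1|\partial_3 f(x_{\mathrm{h}},s)|^2\,\mathrm{d}s .
\end{align*}
Integrating in $x_3\in(0,1)$ and then in $x_{\mathrm{h}}\in\mathbb{R}^2$, with Fubini, yields
\begin{align*}
\|f\|_0^2\leqslant \|\partial_3 f\|_0^2\leqslant\|\nabla f\|_0^2 .
\end{align*}
This holds with constant $1$, and only the vertical derivative is used. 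That matters because no such inequality can hold in the unbounded horizontal directions.

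For general $f\in H^1_0$, take $f_n\in C_c^\infty(\Omega)$ with $f_n\to f$ in $H^1$. Both sides of the inequality are continuous in the $H^1$ norm, so it passes to the limit.

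The only delicate point is the density claim identifying the trace-zero space with the closure of $C_c^\infty(\Omega)$. Since $\partial\Omega$ consists of two flat planes, this is classical, for instance via truncation in $x_{\mathrm{h}}$ followed by the standard flat-boundary argument. Alternatively, one can avoid density altogether. For $f\in H^1$, almost every vertical line restriction $f(x_{\mathrm{h}},\cdot)$ is absolutely continuous on $[0,1]$, and its value at $0$ equals the trace for almost every $x_{\mathrm{h}}$, so the computation above applies directly.
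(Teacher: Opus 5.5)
Your proof is correct. The paper gives no argument of its own and only cites a textbook Friedrichs inequality, whose standard proof is exactly what you wrote: the fundamental theorem of calculus in the bounded $x_3$-direction, Cauchy--Schwarz, Fubini, and density. Your remark reconciling the paper's trace-based definition of $H^1_0$ with the closure of $C_c^\infty(\Omega)$ (or bypassing it via absolutely continuous vertical restrictions) covers the one point the citation leaves implicit, and either of your two routes works.
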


\begin{lem}\label{lem:10220837nm}
Young's inequality (see \cite{ELGP}): Let $1<p, q<\infty$, $\frac{1}{p} + \frac{1}{q} = 1$. Then
\begin{align}\label{young}
    ab \leqslant \frac{a^p}{p} + \frac{b^q}{q}\;\;\;(a,b>0).
\end{align}
A more general form, known as Young's inequality with $\epsilon$, states that %for any $\epsilon>0$,
\begin{align}\label{eyoung}
    ab \leqslant \epsilon a^p + C(\epsilon) b^q\;\;\;(a,b>0,\;\epsilon>0),
\end{align}
where $C(\epsilon)=(\epsilon p)^{-p/q} q^{-1}$.
\end{lem}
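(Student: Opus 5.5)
This inequality is classical, so the plan is a direct elementary argument rather than anything depending on the PDE analysis above. First I would prove \eqref{young} from the concavity of the logarithm. For $a,b>0$ and $\theta=1/p\in(0,1)$, so that $1-\theta=1/q$, concavity of $\log$ on $(0,\infty)$ gives
\begin{align*}
\log\Big(\frac{a^p}{p}+\frac{b^q}{q}\Big)\geqslant \frac{1}{p}\log a^p+\frac{1}{q}\log b^q=\log(ab).
\end{align*}
Exponentiating, which is monotone, yields \eqref{young}. As an alternative, one could minimize $a\mapsto a^p/p-ab$ over $a>0$. Its minimum is attained at $a=b^{1/(p-1)}=b^{q/p}$, where it equals $-b^q/q$, and this again gives the claim.

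Next, for the $\epsilon$-version \eqref{eyoung}, I would rescale. Write $ab=\big((\epsilon p)^{1/p}a\big)\big((\epsilon p)^{-1/p}b\big)$ and apply \eqref{young} to the two factors. This gives
\begin{align*}
ab\leqslant \frac{\epsilon p\, a^p}{p}+\frac{(\epsilon p)^{-q/p}b^q}{q}=\epsilon a^p+(\epsilon p)^{-q/p}q^{-1}b^q .
\end{align*}

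The only step needing care is matching the stated constant. The computation above produces $(\epsilon p)^{-q/p}q^{-1}$, whereas the lemma writes $C(\epsilon)=(\epsilon p)^{-p/q}q^{-1}$. I expect this to be a typo in the exponent. I would check it by optimizing the choice of $\lambda$ in $ab=(\lambda a)(b/\lambda)$. The requirement $\lambda^p/p=\epsilon$ forces $\lambda=(\epsilon p)^{1/p}$, so the coefficient of $b^q$ is necessarily $\lambda^{-q}/q=(\epsilon p)^{-q/p}/q$. I would therefore state the constant as $(\epsilon p)^{-q/p}q^{-1}$ and note the correction. For the paper's uses, only the existence of some finite $C(\epsilon)$ matters, so the correction does not affect anything downstream.
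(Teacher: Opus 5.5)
Your proof is correct and matches the standard argument in the cited reference: convexity of the exponential (equivalently, concavity of $\log$) gives \eqref{young}, and the rescaling $ab=((\epsilon p)^{1/p}a)((\epsilon p)^{-1/p}b)$ gives \eqref{eyoung}; the paper itself gives no proof beyond the citation. You are also right that the printed constant is a typo. The cited source has $C(\epsilon)=(\epsilon p)^{-q/p}q^{-1}$, which is in fact the sharp constant, so the printed exponent $-p/q$ makes \eqref{eyoung} false when $p\neq q$. For example, $p=3$, $\epsilon=1$, $a=1$, $b=3$ would require $3\leqslant 1+\frac{2}{27}\cdot 3^{3/2}$, which fails. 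For $p=q=2$ the two expressions coincide, and elsewhere the paper only uses that $C(\epsilon)$ is finite.
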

\begin{rem}
When $p = q = 2$, inequality \eqref{eyoung} reduces to the form
$$ab\leq \epsilon a^2+\frac{1}{4\epsilon}b^2\;\;\;(a,b>0,\;\epsilon>0),$$
which is widely used in energy estimates.
\end{rem}

\begin{lem}\label{10220828}
Interpolation inequality
(see \cite[Theorem 5.2]{ARAJJFF}):
For any $0\leqslant j< i$, $\epsilon>0$, % and $f\in H^{i}$,
\begin{align}\label{interpolation}
&\|f\|_{j}\lesssim\|f\|_{0}^{1-{j}/{i}}\|f\|_{i}^{{j}/{i}}
\leqslant {C}( i,j,\epsilon)\|f\|_{0} +\epsilon\|f\|_{i}\quad\mathrm{for}\;f\in H^{i},
\end{align}
where the constant ${C}( i,j,\epsilon)$ depends $i,j$ and $\epsilon$, and Young's inequality
has been used in the last inequality.
\end{lem}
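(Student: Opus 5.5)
This is the classical Gagliardo--Nirenberg/Sobolev-type interpolation on the strip $\Omega=\mathbb{R}^2\times(0,1)$. I would prove it by reducing to the whole space through a bounded extension operator, and then applying Plancherel with a splitting of frequencies.

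\textbf{Step 1: extension.} Following Stein (or, since $\Omega$ is a strip, the explicit higher-order reflection in $x_3$ across $\{x_3=0\}$ and $\{x_3=1\}$ after a partition of unity in $x_3$), I construct a linear operator $E$ with $Ef|_{\Omega}=f$. It must satisfy
\begin{align*}
\|Ef\|_{H^k(\mathbb{R}^3)}\lesssim\|f\|_{k}\quad\text{for all }0\leqslant k\leqslant i
\end{align*}
simultaneously. The simultaneous bound matters, because otherwise the two norms in the interpolation would be taken for different extensions. Constructing this single total extension operator is the only genuinely technical point, and I expect it to be the main obstacle. For the strip I would use the reflection
\begin{align*}
Ef(x_{\mathrm h},-x_3)=\sum_{l=1}^{i+1}a_l f(x_{\mathrm h},x_3/l),
\end{align*}
with the coefficients $a_l$ chosen so that derivatives up to order $i$ match at $x_3=0$. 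I would do the same near $x_3=1$, glue the two pieces with cutoffs in $x_3$, and check that the bounds hold for all $k\leqslant i$ with a single $E$.

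\textbf{Step 2: whole-space interpolation.} For $g\in H^i(\mathbb{R}^3)$ I would show
\begin{align*}
\|g\|_{H^j}^2=\int(1+|\xi|^2)^j|\hat g|^2\,\mathrm{d}\xi\leqslant\Big(\int|\hat g|^2\,\mathrm{d}\xi\Big)^{1-j/i}\Big(\int(1+|\xi|^2)^i|\hat g|^2\,\mathrm{d}\xi\Big)^{j/i}.
\end{align*}
This is H\"older's inequality with exponents $i/(i-j)$ and $i/j$, applied to the measure $|\hat g|^2\,\mathrm{d}\xi$ after writing $(1+|\xi|^2)^j=1^{1-j/i}\big((1+|\xi|^2)^i\big)^{j/i}$.

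\textbf{Step 3: conclusion.} I apply Step 2 to $g=Ef$ and use restriction, $\|f\|_j\leqslant\|Ef\|_{H^j}$, together with Step 1. This gives
\begin{align*}
\|f\|_j\lesssim\|f\|_0^{1-j/i}\|f\|_i^{j/i}.
\end{align*}
The second inequality then follows from Young's inequality \eqref{eyoung} with $p=i/(i-j)$ and $q=i/j$, applied to $a=\|f\|_0^{1-j/i}$ and $b=\|f\|_i^{j/i}$. This produces $\epsilon\|f\|_i+C(i,j,\epsilon)\|f\|_0$ after renaming $\epsilon$. The case $j=0$ is trivial.
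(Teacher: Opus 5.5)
Your argument is correct, but it takes a different route from the one behind the statement. The paper gives no proof of its own and defers to Theorem 5.2 of Adams--Fournier. There the interpolation inequality is proved directly on any domain satisfying the cone condition, which the strip does. The proof uses one-dimensional interpolation along line segments, combined through the cone geometry, works for every $1\leqslant p<\infty$, and needs no extension operator.

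You instead exploit two features special to this setting: $p=2$ and the flat geometry of $\Omega$. The higher-order reflection in $x_3$ gives one extension $E$ that is bounded both $L^2(\Omega)\to L^2(\mathbb{R}^3)$ and $H^i(\Omega)\to H^i(\mathbb{R}^3)$. Your compressed arguments $x_3/l$ are the right choice for a slab, since they keep the reflected points inside $(0,1)$. Plancherel plus H\"older for the measure $|\hat g|^2\,\mathrm{d}\xi$ then gives the multiplicative bound in one line. Your route is shorter and fully self-contained here. The cited argument is more technical, but it covers $L^p$ norms and cone-condition domains that admit no extension operator (e.g.\ slit domains), where your method does not apply.

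There is one small slip in Step 3. With \eqref{eyoung} written as $ab\leqslant\epsilon a^p+C(\epsilon)b^q$, your choice $a=\|f\|_0^{1-j/i}$, $p=i/(i-j)$ puts the small factor on $\|f\|_0$. Renaming $\epsilon$ cannot move it, since $C(\epsilon)\to\infty$ as $\epsilon\to0$. Take instead $a=\|f\|_i^{j/i}$, $p=i/j$ and $b=\|f\|_0^{1-j/i}$, $q=i/(i-j)$.
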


%We will need some estimates for the product of functions in Sobolev spaces (referred to as product estimates).
\begin{lem}\label{10220833}
Product estimates (\cite[Lemma A.9]{JFJSWZhangwei}): Let $0\leqslant i\leqslant 2$, for $f,g\in H^{i}$, we have
\begin{align}
&\label{product}
 \|fg\|_{i}\lesssim    \begin{cases}
 \|f\|_{1}\|g\|_{1} & \quad\mathrm{for}\;\;i=0;  \\[1mm]
 \|f\|_{i}\|g\|_{2} & \quad\mathrm{for}\;\;0\leqslant i\leqslant 2.
                    \end{cases}
\end{align}
\end{lem}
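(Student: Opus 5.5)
The statement to prove is the product estimate \eqref{product}: $\|fg\|_0\lesssim\|f\|_1\|g\|_1$, and $\|fg\|_i\lesssim\|f\|_i\|g\|_2$ for $0\leqslant i\leqslant 2$, on the strip $\Omega=\mathbb{R}^2\times(0,1)$. The plan is to reduce everything to Hölder's inequality combined with the embeddings \eqref{embed237}--\eqref{embed2}. These embeddings hold on $\Omega$ because it is a uniformly Lipschitz (indeed flat) domain, so a bounded extension operator $H^k(\Omega)\to H^k(\mathbb{R}^3)$ exists. One way to build it is to reflect across $x_3=0$ and $x_3=1$ and multiply by a cutoff in $x_3$.

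For $i=0$, Hölder gives $\|fg\|_0\leqslant\|f\|_{L^4}\|g\|_{L^4}$, and \eqref{embed237} with $p=4$ closes this case. The case $i=0$ of the second line is simply $\|fg\|_0\leqslant\|f\|_0\|g\|_{L^\infty}\lesssim\|f\|_0\|g\|_2$, using \eqref{embed2}.

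For $i=1$, I expand $\nabla(fg)=g\nabla f+f\nabla g$.
\begin{itemize}
\item The first term is bounded by $\|g\|_{L^\infty}\|\nabla f\|_0\lesssim\|g\|_2\|f\|_1$.
\item The second term is bounded by $\|f\|_{L^4}\|\nabla g\|_{L^4}\lesssim\|f\|_1\|\nabla g\|_1\lesssim\|f\|_1\|g\|_2$.
\end{itemize}
Adding the $L^2$ part already done gives the $i=1$ estimate.

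For $i=2$, Leibniz gives three kinds of terms: $g\,\partial^2 f$, $\partial f\,\partial g$ and $f\,\partial^2 g$.
\begin{itemize}
\item $g\,\partial^2 f$ is bounded by $\|g\|_{L^\infty}\|f\|_2$.
\item $\partial f\,\partial g$ is bounded by $\|\partial f\|_{L^4}\|\partial g\|_{L^4}\lesssim\|f\|_2\|g\|_2$.
\item $f\,\partial^2 g$ is bounded by $\|f\|_{L^\infty}\|\partial^2 g\|_0\lesssim\|f\|_2\|g\|_2$.
\end{itemize}
Lower-order terms are handled as in the cases $i=0,1$.

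The only point needing care is the justification of the embeddings on the unbounded strip with uniform constants. I will take this from the cited extension theory rather than reprove it; the constants are translation-invariant in $x_{\mathrm h}$, so they are universal. Finally, a density argument with smooth functions (dense in $H^i(\Omega)$) makes the Leibniz computations rigorous.
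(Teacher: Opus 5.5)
Your proof is correct. The paper gives no argument for this lemma; it only cites Lemma A.9 of the reference. Your computation (the Leibniz rule, H\"older's inequality, and the three-dimensional embeddings $H^1\hookrightarrow L^4$ and $H^2\hookrightarrow L^\infty$, plus a density argument) is the standard proof behind such a citation, so it fills in what the paper takes for granted.

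One small correction concerns your side remark on extensions. A plain even reflection across $x_3=0$ and $x_3=1$ only yields an $H^1$ extension, because $\partial_3$ of the reflected function jumps in sign at the boundary. For $H^2$ you need a higher-order reflection, for instance $3f(x_{\mathrm{h}},-x_3)-2f(x_{\mathrm{h}},-2x_3)$ for $x_3\in(-1/2,0)$ and its analogue near $x_3=1$. Alternatively, you can simply invoke \eqref{embed237}--\eqref{embed2}, which the paper already records from Adams--Fournier and which apply on the strip since it satisfies the cone and Lipschitz conditions.
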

%\begin{pf}
%Please refer to \cite[Lemma A.9]{JFJSWZhangwei} for the proof of \eqref{product}.
%\hfill$\Box$
%\end{pf}

\begin{lem}\label{lem:11190836}
Dual estimate (see \cite[Lemma A.8]{JFJHJS}): For $\varphi$, $\psi\in H^{1/2}(\mathbb{R}^2)$ and $\partial_{\mathrm{h}}\varphi\in L^1(\mathbb{R}^2)$,
\begin{align}\label{11190840}
\left|\int_{\mathbb{R}^2}\partial_{\mathrm{h}}\varphi\psi\mathrm{d}x_{\mathrm{h}}\right|
\lesssim|\varphi|_{1/2}|\psi|_{1/2}.
\end{align}
%Moreover, we have
%\begin{align}\label{11190841}
%|\partial_{\mm{h}}\varphi|_{-1/2}
%\lesssim|\varphi|_{1/2}.
%\end{align}
\end{lem}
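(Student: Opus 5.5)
The plan is to prove the estimate on the Fourier side, where $\partial_{\mathrm{h}}$ becomes multiplication by a symbol of order one. That order is split evenly between the two factors, and the extra regularity assumption $\partial_{\mathrm{h}}\varphi\in L^1(\mathbb{R}^2)$ is used only to justify that the integral on the left coincides with the Fourier pairing. Throughout, $|\cdot|_{1/2}$ denotes the norm $|f|_{1/2}^2=\int_{\mathbb{R}^2}(1+|\xi|^2)^{1/2}|\hat f(\xi)|^2\mathrm{d}\xi$, which is equivalent to the usual $H^{1/2}(\mathbb{R}^2)$ norm.

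\textbf{Step 1 (smooth case).} First I take $\varphi,\psi$ in the Schwartz class $\mathcal{S}(\mathbb{R}^2)$. By Plancherel's identity and $\widehat{\partial_k\varphi}(\xi)=i\xi_k\hat\varphi(\xi)$,
\begin{align*}
\left|\int_{\mathbb{R}^2}\partial_{k}\varphi\,\psi\,\mathrm{d}x_{\mathrm{h}}\right|
=\left|\int_{\mathbb{R}^2}i\xi_k\hat\varphi(\xi)\overline{\hat\psi(\xi)}\,\mathrm{d}\xi\right|
\leqslant\int_{\mathbb{R}^2}(1+|\xi|^2)^{1/4}|\hat\varphi(\xi)|\,(1+|\xi|^2)^{1/4}|\hat\psi(\xi)|\,\mathrm{d}\xi,
\end{align*}
where I used $|\xi_k|\leqslant(1+|\xi|^2)^{1/2}$. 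The Cauchy--Schwarz inequality then bounds the right-hand side by $|\varphi|_{1/2}|\psi|_{1/2}$, which is \eqref{11190840} in the smooth case. This also shows that the bilinear form $B(\varphi,\psi):=\int i\xi_k\hat\varphi\overline{\hat\psi}\,\mathrm{d}\xi$ extends continuously to $H^{1/2}\times H^{1/2}$ with the same bound.

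\textbf{Step 2 (identification of the integral).} For general $\varphi,\psi\in H^{1/2}(\mathbb{R}^2)$ it remains to show that $\int\partial_{\mathrm{h}}\varphi\,\psi\,\mathrm{d}x_{\mathrm{h}}=B(\varphi,\psi)$ whenever the left side is meaningful. This is where $\partial_{\mathrm{h}}\varphi\in L^1$ enters.

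I would mollify only $\psi$: set $\psi_\varepsilon=\eta_\varepsilon*\psi$. Then $\psi_\varepsilon$ is smooth and bounded, so $\int\partial_{\mathrm{h}}\varphi\,\psi_\varepsilon$ is an absolutely convergent integral. It equals $B(\varphi,\psi_\varepsilon)$ by Parseval in the $H^{-1/2}$--$H^{1/2}$ duality, since $\partial_{\mathrm{h}}\varphi\in H^{-1/2}$.

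Letting $\varepsilon\to0$, the right side converges to $B(\varphi,\psi)$ because $\psi_\varepsilon\to\psi$ in $H^{1/2}$. On the left, $\psi_\varepsilon\to\psi$ a.e. along a subsequence. If $\partial_{\mathrm{h}}\varphi\,\psi\in L^1$, so that the integral in \eqref{11190840} is understood as a Lebesgue integral, I pass to the limit by dominated convergence, using the dominating function $|\partial_{\mathrm{h}}\varphi|\,M\psi$, where $M\psi$ is the maximal function. Otherwise the integral is defined as the duality pairing, and the identity is immediate.

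\textbf{Main obstacle.} The only delicate point is Step 2, namely making rigorous the meaning of the integral when $\psi$ is merely in $H^{1/2}$ and hence possibly unbounded. The domination argument above is what I would try first. If it is insufficient, I would simply interpret the left-hand side of \eqref{11190840} as the $H^{-1/2}$--$H^{1/2}$ pairing. That interpretation is in any case how the estimate is used in \eqref{20926504111737}, where both traces are smooth enough for the approximation to apply directly.
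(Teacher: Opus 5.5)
Your Step 1 is the right core argument: $\partial_k$ becomes the symbol $i\xi_k$, you split $|\xi_k|\leqslant(1+|\xi|^2)^{1/4}(1+|\xi|^2)^{1/4}$, and you apply Cauchy--Schwarz. The paper gives no proof of its own and only cites Lemma A.8 of its reference; this Fourier argument is the standard route to such a bound. Under the standing smoothness assumption, it is also all that \eqref{20926504111737} needs.

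The gap is in Step 2, the one place where you try to use $\partial_{\mathrm{h}}\varphi\in L^1$ to identify the Lebesgue integral with $B(\varphi,\psi)$. Nothing shows that your dominating function $|\partial_{\mathrm{h}}\varphi|\,M\psi$ is integrable. Here $\partial_{\mathrm{h}}\varphi$ is only in $L^1$. In general $M\psi$ is only in $L^p$ for $2\leqslant p\leqslant 4$, not in $L^\infty$, since $H^{1/2}(\mathbb{R}^2)$ contains unbounded functions. The assumption $\partial_{\mathrm{h}}\varphi\,\psi\in L^1$ gives no control of $\partial_{\mathrm{h}}\varphi\,M\psi$, because $M\psi$ can be much larger than $|\psi|$ on the support of $\partial_{\mathrm{h}}\varphi$ (e.g.\ where $\psi$ vanishes). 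So dominated convergence is unavailable, and the limit $\int\partial_{\mathrm{h}}\varphi\,\psi_\varepsilon\to\int\partial_{\mathrm{h}}\varphi\,\psi$ is unproved. Your fallback of reading the left side as the $H^{-1/2}$--$H^{1/2}$ pairing changes the statement and leaves the $L^1$ hypothesis with no role.

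The fix is to truncate before you mollify.
\begin{itemize}
\item[1.] Let $\psi_k:=\max\{-k,\min\{k,\psi\}\}$. Truncation is $1$-Lipschitz, so in the Gagliardo form of the $H^{1/2}(\mathbb{R}^2)$ norm the integrand $|(\psi_k-\psi)(x)-(\psi_k-\psi)(y)|^2|x-y|^{-3}$ is bounded by $4|\psi(x)-\psi(y)|^2|x-y|^{-3}$ and tends to $0$ a.e. Dominated convergence then gives $\psi_k\to\psi$ in $H^{1/2}$.
\item[2.] For each fixed $k$, your mollification argument now works, with dominating function $k|\partial_{\mathrm{h}}\varphi|\in L^1$. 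This is exactly where $\partial_{\mathrm{h}}\varphi\in L^1$ is used. The identity $\int\partial_{\mathrm{h}}\varphi\,(\eta_\varepsilon*\psi_k)=B(\varphi,\eta_\varepsilon*\psi_k)$ for this smooth, bounded, but not compactly supported function needs only a cutoff $\chi_R$, again justified by $L^1\times L^\infty$.
\item[3.] Let $k\to\infty$. The bound $|\partial_{\mathrm{h}}\varphi\,\psi_k|\leqslant|\partial_{\mathrm{h}}\varphi\,\psi|\in L^1$ gives convergence of the left side. Continuity of $B$ on $H^{1/2}\times H^{1/2}$ gives convergence of the right side.
\end{itemize}
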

%\begin{pf}
%Please refer to \cite[Lemma A.8]{JFJHJS} for the proof.
%\hfill$\Box$
%\end{pf}

\begin{lem}\label{10220835bnbm}
Trace estimate(see \cite[Lemma A.6]{JFJHJS}): For $f \in H^{1+i}$ with $i\geqslant0$, it holds that
\begin{equation}\label{37190928}
\| f|_{y_3=a} \|_{H^{i+1/2}(\mathbb{R}^2)} %\lesssim \| f \|_{\underline{1+i},0}^{1/2} \| \partial_3 f \|_{\underline{i},0}^{1/2} + \| f \|_{\underline{1+i},0}
\lesssim \| f \|_{\underline{1+i},0}^{1/2} \| f \|_{\underline{i},1}^{1/2}\quad\mathrm{for\;any }\;\;a\in[0,1].
\end{equation}
\end{lem}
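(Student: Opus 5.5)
The plan is to prove \eqref{37190928} by taking the Fourier transform in the horizontal variables $x_{\mathrm{h}}$ only. This reduces it to a one-dimensional trace inequality on $(0,1)$, applied pointwise in the frequency $\xi\in\mathbb{R}^2$, followed by a Cauchy--Schwarz step in $\xi$ that distributes the weight $(1+|\xi|^2)^{i+1/2}$ as $(1+|\xi|^2)^{(i+1)/2}\cdot(1+|\xi|^2)^{i/2}$. By density I will assume $f$ is smooth with rapid decay in $x_{\mathrm{h}}$ and smooth up to $x_3\in\{0,1\}$, and pass to the limit at the end since both sides are continuous in $H^{1+i}$. Throughout, $i$ is a nonnegative integer, as in the norms $\|\cdot\|_{\underline{k},l}$.

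\textbf{Step 1: one-dimensional trace inequality.} For $g\in C^1([0,1])$ and any $a,b\in[0,1]$, the fundamental theorem of calculus gives
$$|g(a)|^2=|g(b)|^2+\int_b^a 2\,\mathrm{Re}\big(\bar g\, g'\big)\,\mathrm{d}x_3 .$$
Averaging over $b\in(0,1)$, which is possible because the interval has unit length, yields
$$|g(a)|^2\leqslant \|g\|_{L^2(0,1)}^2+2\|g\|_{L^2(0,1)}\|g'\|_{L^2(0,1)},$$
with a constant independent of $a\in[0,1]$. I apply this to $g(x_3)=\hat f(\xi,x_3)$ for each fixed $\xi$, where $\hat{\ }$ is the horizontal Fourier transform.

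\textbf{Step 2: weighted integration in $\xi$.} Multiply the Step 1 inequality by $(1+|\xi|^2)^{i+1/2}$ and integrate over $\mathbb{R}^2$, writing $\langle\xi\rangle:=(1+|\xi|^2)^{1/2}$. The left side is the squared norm $\|f|_{x_3=a}\|_{H^{i+1/2}(\mathbb{R}^2)}^2$. For each of the two terms on the right, I split $\langle\xi\rangle^{2i+1}=\langle\xi\rangle^{i+1}\langle\xi\rangle^{i}$ and apply Cauchy--Schwarz in $\xi$:
$$\int\langle\xi\rangle^{2i+1}\|\hat f\|\,\|\partial_3\hat f\|\,\mathrm{d}\xi\leqslant\Big(\int\langle\xi\rangle^{2i+2}\|\hat f\|^2\mathrm{d}\xi\Big)^{1/2}\Big(\int\langle\xi\rangle^{2i}\|\partial_3\hat f\|^2\mathrm{d}\xi\Big)^{1/2},$$
and similarly for the $\|\hat f\|^2$ term, with $\|\hat f\|$ in place of $\|\partial_3\hat f\|$ in the second factor.

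\textbf{Step 3: back to physical space.} By Plancherel in $x_{\mathrm{h}}$ and Fubini in $x_3$, and since $\langle\xi\rangle^{2k}\simeq\sum_{|\beta|\leqslant k}|\xi^\beta|^2$ for integer $k$, the first factor is comparable to $\|f\|_{\underline{1+i},0}^2$. The second factor, in either form, is bounded by $\sum_{j\leqslant i}\|\partial_{\mathrm{h}}^j f\|_1^2\simeq\|f\|_{\underline{i},1}^2$, because $\|\partial_{\mathrm{h}}^j f\|_1$ controls both $\|\partial_{\mathrm{h}}^j f\|_0$ and $\|\partial_{\mathrm{h}}^j\partial_3 f\|_0$. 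Combining Steps 2 and 3 gives
$$\|f|_{x_3=a}\|_{H^{i+1/2}(\mathbb{R}^2)}^2\lesssim\|f\|_{\underline{1+i},0}\|f\|_{\underline{i},1},$$
and taking square roots yields \eqref{37190928}.

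There is no serious obstacle here. The only points needing care are that the constant in Step 1 is uniform in $a\in[0,1]$, which holds because the strip has unit height, and that the weight is split exactly as $\langle\xi\rangle^{i+1}\langle\xi\rangle^{i}$ so that no $\|f\|_{\underline{1+i},1}$-type term appears.
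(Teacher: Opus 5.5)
Your proof is correct. The one-dimensional bound $|g(a)|^2\leqslant\|g\|_{L^2(0,1)}^2+2\|g\|_{L^2(0,1)}\|g'\|_{L^2(0,1)}$ holds uniformly in $a\in[0,1]$. Splitting $\langle\xi\rangle^{2i+1}=\langle\xi\rangle^{i+1}\langle\xi\rangle^{i}$ and applying Cauchy--Schwarz in $\xi$ gives exactly the product $\|f\|_{\underline{1+i},0}\|f\|_{\underline{i},1}$, and the density step is routine.

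The paper gives no proof of \eqref{37190928} and only quotes Lemma A.6 of \cite{JFJHJS}, so your horizontal-Fourier argument is a self-contained proof of the standard kind rather than a different route.

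You can also get a slightly sharper bound from the same computation. Keep the two right-hand terms separate and use $\langle\xi\rangle^{2i+1}\leqslant\langle\xi\rangle^{2i+2}$ on the $\|\hat f\|^2$ term. This gives $\|f|_{x_3=a}\|_{H^{i+1/2}(\mathbb{R}^2)}\lesssim\|f\|_{\underline{1+i},0}+\|f\|_{\underline{1+i},0}^{1/2}\|\partial_3 f\|_{\underline{i},0}^{1/2}$, which implies the stated form because $\|f\|_{\underline{1+i},0}\lesssim\|f\|_{\underline{i},1}$.
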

%\begin{pf}
%Please refer to \cite[Lemma A.6]{JFJHJS} for the proof.
%\hfill$\Box$
%\end{pf}

\begin{lem}\label{prosfxdxx}
Elliptic estimate (see \cite{SAADLNEAE1964}): Let $k\geqslant 0$ and $f^1\in H^{k}$, then there exists a unique solution $\varphi\in H^{k+2}$ to the following Lam\'e problem:
\begin{equation*}
\begin{cases}
-\mu\Delta \varphi-(\mu+\mu')\nabla \mathrm{div}\varphi=f^1\quad&\mathrm{ in }\;\;\Omega, \\
\varphi=0 &\mathrm{ on }\;\;\partial\Omega.
\end{cases}
\end{equation*}
Moreover, the solution enjoys
\begin{equation}
\label{Ellipticestimate}
\|\varphi\|_{k+2}\lesssim\|f^1\|_{k}.
\end{equation}
\end{lem}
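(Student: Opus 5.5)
The plan is the classical three-step argument for strongly elliptic systems: a variational $H^1$ solution, tangential regularity, and recovery of normal derivatives from the equation. Because $\Omega=\mathbb{R}^2\times(0,1)$ has flat boundaries, horizontal translations preserve both $\Omega$ and the Dirichlet condition, so no localization or flattening is needed. The only structural assumption I will use is strong ellipticity, $\mu>0$ and $2\mu+\mu'>0$. In the application to \eqref{n0101nn928n} this reads $\mu+\chi>0$ and $2\mu+\mu'>0$, which follows from the physical restrictions.

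\emph{Step 1 ($H^1$ solution).} Consider the bilinear form
$$B(\varphi,\psi)=\mu\int\nabla\varphi:\nabla\psi\,\mathrm{d}x+(\mu+\mu')\int\mathrm{div}\varphi\,\mathrm{div}\psi\,\mathrm{d}x\quad\text{on }H^1_0.$$
For $\varphi\in H^1_0$ the identity $\int|\nabla\varphi|^2\mathrm{d}x=\int(|\mathrm{div}\varphi|^2+|\nabla\times\varphi|^2)\mathrm{d}x$ holds (integrate by parts; the boundary terms vanish). Hence $\|\mathrm{div}\varphi\|_0\leqslant\|\nabla\varphi\|_0$, and so
$$B(\varphi,\varphi)\geqslant\min\{\mu,\,2\mu+\mu'\}\|\nabla\varphi\|_0^2.$$
Combined with the Friedrichs inequality \eqref{friedrich}, this makes $B$ coercive on $H^1_0$. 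Lax--Milgram then gives a unique weak solution with $\|\varphi\|_1\lesssim\|f^1\|_{-1}\lesssim\|f^1\|_0$. Uniqueness in $H^{k+2}$ follows from the same coercivity.

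\emph{Step 2 (tangential regularity).} Apply horizontal difference quotients $D^h_{\mathrm{h}}$, which commute with the operator and preserve the boundary condition. Test the difference-quotient equation against $-D^{-h}_{\mathrm{h}}D^h_{\mathrm{h}}\varphi$ and move one difference quotient onto $f^1$. Coercivity then yields $\|\nabla\partial_{\mathrm{h}}\varphi\|_0\lesssim\|f^1\|_0$. Iterating for $k\geqslant1$ gives
$$\|\nabla\partial_{\mathrm{h}}^{j}\varphi\|_0\lesssim\|f^1\|_{\underline{j-1},0}\quad\text{for }j\leqslant k+1,$$
that is, every derivative of total order $\leqslant k+2$ with at most one normal derivative is controlled.

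\emph{Step 3 (normal derivatives).} Solve the system algebraically for $\partial_3^2\varphi$:
$$\mu\,\partial_3^2\varphi_{\mathrm{h}}=-f^1_{\mathrm{h}}-\mu\Delta_{\mathrm{h}}\varphi_{\mathrm{h}}-(\mu+\mu')\nabla_{\mathrm{h}}\mathrm{div}\varphi,$$
$$(2\mu+\mu')\,\partial_3^2\varphi_3=-f^1_3-\mu\Delta_{\mathrm{h}}\varphi_3-(\mu+\mu')\partial_3\mathrm{div}_{\mathrm{h}}\varphi_{\mathrm{h}}.$$
Each right-hand side involves at most one $\partial_3$ applied to $\varphi$, together with $f^1$. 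This is exactly where strong ellipticity is needed: the normal symbol coefficients $\mu$ and $2\mu+\mu'$ must be invertible. Step 2 therefore gives $\|\varphi\|_2\lesssim\|f^1\|_0$. For general $k$, induct on the number $m$ of normal derivatives. Apply $\partial_{\mathrm{h}}^{a}\partial_3^{b}$ with $a+b=k$ to these identities. This expresses $\partial_3^{b+2}\partial_{\mathrm{h}}^a\varphi$ through $\partial^{k}f^1$ and derivatives containing at most $b+1$ normal derivatives, which are already bounded at the previous induction level. Summing over $a+b=k$ gives \eqref{Ellipticestimate}. Existence in $H^{k+2}$ follows because the weak solution has been shown to possess these derivatives.

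I expect the main obstacle to be Step 2 on the unbounded strip. There is no compactness, so the lower-order term $\|\varphi\|_0$ cannot be absorbed by the usual contradiction argument. The plan avoids this by relying solely on the Friedrichs inequality (valid since the strip has width $1$) and on the coercivity constant from Step 1. This keeps every constant uniform and removes any $\|\varphi\|_0$ term from the final estimate.
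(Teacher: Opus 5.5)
Your proof is correct, but it does not follow the paper's route. The paper gives no argument for this lemma and simply cites the Agmon--Douglis--Nirenberg theory. That citation buys brevity and generality, since it covers general elliptic systems satisfying the complementing condition. Taken literally, however, it gives estimates of the form $\|\varphi\|_{k+2}\lesssim\|f^1\|_k+\|\varphi\|_0$ on bounded smooth domains or a half-space. On the strip one still has to patch together uniform local estimates near $x_3=0$ and $x_3=1$, supply existence and uniqueness separately, and remove the lower-order term.

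Your argument does all of this directly, using the flat, horizontally translation-invariant geometry:
\begin{itemize}
\item Lax--Milgram with the Friedrichs inequality \eqref{friedrich} gives existence, uniqueness and the $H^1$ bound with no $\|\varphi\|_0$ remainder.
\item Horizontal difference quotients control every derivative of order at most $k+2$ containing at most one $\partial_3$.
\item Solving the system for $\partial_3^2\varphi_{\mathrm{h}}$ and $\partial_3^2\varphi_3$ recovers the remaining normal derivatives by induction on their number. This is where the coefficients $\mu$ and $2\mu+\mu'$ must be nonzero.
\end{itemize}
A further advantage is that your route makes explicit the ellipticity hypotheses $\mu>0$ and $2\mu+\mu'>0$, which the lemma's statement leaves implicit. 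You also correctly check them for the coefficients $\mu+\chi$ and $\mu+\mu'-\chi$ of \eqref{n0101nn928n}: the physical restriction $2\mu+3\mu'-4\chi\geqslant 0$ gives $2\mu+\mu'\geqslant 4(\mu+\chi)/3>0$.

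One small correction in Step~2. For $k=0$ you cannot move a difference quotient onto $f^1\in L^2$. Instead keep both quotients on $\varphi$ and estimate
$$\Big|\int f^1\cdot D^{-h}_{\mathrm{h}}D^{h}_{\mathrm{h}}\varphi\,\mathrm{d}x\Big|\leqslant\|f^1\|_0\,\|\nabla D^{h}_{\mathrm{h}}\varphi\|_0 .$$
With coercivity this gives the same bound $\|\nabla\partial_{\mathrm{h}}\varphi\|_0\lesssim\|f^1\|_0$. Moving a quotient onto the data is only legitimate in the iteration steps, where $f^1$ has horizontal derivatives in $L^2$.
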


\begin{lem}\label{10220835}
Stokes estimate:
(see \cite[Lemma A.12]{WYJAIM2020}):
Let $k \geqslant 0$. Suppose that $f^2 \in H^{k}$ and $ f^3 \in H^{k+1}$, then $\varphi \in H^{k+2}, \nabla \psi \in H^{k}$ solving the problem
\begin{equation*}%\label{onelayerstokes}
\begin{cases}
\nabla \psi-\mu\Delta \varphi= f^{2}\quad &\mathrm{ in }\;\; \Omega,\\[0.5mm]
\mathrm{div}\varphi=f^{3} &\mathrm{ in }\;\;  \Omega,\\[0.5mm]
\varphi=0 &\mathrm{ on }\;\; \partial \Omega.
\end{cases}
\end{equation*}
Moreover, the solution enjoys that
\begin{align}\label{11190928}
\|\varphi\|_{k+2}+\|\nabla \psi\|_{k}\lesssim
\|\varphi\|_0^2 +\|f^{2}\|_{k}+\|f^{3}\|_{k+1}.
\end{align}
\end{lem}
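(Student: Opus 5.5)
The plan is to prove the bound \eqref{11190928} as an \emph{a priori} estimate for a given solution pair $(\varphi,\psi)$; existence is taken as part of the hypothesis ``$\varphi$, $\nabla\psi$ solving the problem''. I read the right-hand side as $\|\varphi\|_0+\|f^2\|_k+\|f^3\|_{k+1}$: the square on $\|\varphi\|_0$ looks like a typo, since the estimate must be homogeneous of degree one. The approach is a localization argument. The strip $\Omega=\mathbb{R}^2\times(0,1)$ is covered by translates of a fixed reference cube of side $1$ in the horizontal directions, and the full height $(0,1)$ vertically. We then apply the classical local Cattabriga/Agmon--Douglis--Nirenberg regularity for the Stokes system on each piece, and sum. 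The constants are uniform because the strip is invariant under horizontal translations, so every local problem is a translate of one model problem with flat boundary pieces $\{x_3=0\}$ and $\{x_3=1\}$.

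Concretely, I would fix a partition of unity $\{\zeta_m\}_{m\in\mathbb{Z}^2}$ in $x_{\mathrm{h}}$, subordinate to the enlarged cubes $Q_m=(m+(-1,2)^2)\times(0,1)$, with uniformly bounded derivatives and finite overlap. On each $Q_m$ I would replace $\psi$ by $\psi-\psi_m$, where $\psi_m$ is its mean over $Q_m$. The pair $(\zeta_m\varphi,\zeta_m(\psi-\psi_m))$ then satisfies a Stokes system on a bounded Lipschitz box $\widetilde Q_m\supset \mathrm{supp}\,\zeta_m$, with zero boundary values, and with
\begin{align*}
&\text{force } \zeta_m f^2+[\text{first-order terms in }\varphi]+(\psi-\psi_m)\nabla\zeta_m,\\
&\text{divergence } \zeta_m f^3+\nabla\zeta_m\cdot\varphi.
\end{align*}
To avoid corners one may take $\widetilde Q_m$ smooth, for instance a rounded box containing the support. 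The local $H^{k+2}\times H^{k+1}$ Stokes estimate, applied inductively in $k$, then gives
\begin{align*}
\|\zeta_m\varphi\|_{k+2}+\|\nabla(\zeta_m(\psi-\psi_m))\|_{k}\lesssim{}& \|f^2\|_{H^k(Q_m)}+\|f^3\|_{H^{k+1}(Q_m)}\\
&+\|\varphi\|_{H^{k+1}(Q_m)}+\|\psi-\psi_m\|_{H^{k}(Q_m)}.
\end{align*}
The lower-order pressure term would be reduced as follows. For $k\geqslant1$, $\|\psi-\psi_m\|_{H^k(Q_m)}$ is bounded by $\|\nabla\psi\|_{H^{k-1}(Q_m)}$, which is handled by induction on $k$. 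For $k=0$ we use the Ne\v{c}as inequality $\|\psi-\psi_m\|_{L^2(Q_m)}\lesssim\|\nabla\psi\|_{H^{-1}(Q_m)}$. Since $\nabla\psi=f^2+\mu\Delta\varphi$, this gives $\|\nabla\psi\|_{H^{-1}(Q_m)}\lesssim\|f^2\|_{L^2(Q_m)}+\|\nabla\varphi\|_{L^2(Q_m)}$.

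Squaring and summing over $m$ (finite overlap), I would arrive at
\[
\|\varphi\|_{k+2}+\|\nabla\psi\|_k\lesssim\|f^2\|_k+\|f^3\|_{k+1}+\|\varphi\|_{k+1}.
\]
Then I would apply the interpolation inequality \eqref{interpolation} in the form $\|\varphi\|_{k+1}\leqslant\epsilon\|\varphi\|_{k+2}+C(\epsilon)\|\varphi\|_0$, and absorb the $\epsilon$-term on the left. This yields \eqref{11190928}.

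The step I expect to be the main obstacle is the pressure, both its lower-order local contribution and the uniformity of the Ne\v{c}as constant. Since $\psi$ itself is not in $L^2(\Omega)$ (only $\nabla\psi$ is), the subtraction of a cube-dependent constant $\psi_m$ is essential. One also has to check that the commutator term $(\psi-\psi_m)\nabla\zeta_m$ sums correctly when neighbouring cubes use different constants. I would handle this by writing $\psi_m-\psi_{m'}$ for adjacent cubes in terms of $\|\nabla\psi\|$ on their union, with the Poincaré constant uniform by translation invariance. If this becomes cumbersome, the fallback is to take horizontal Fourier transforms and derive the estimate frequency-by-frequency from the resulting ODE system in $x_3$ on $(0,1)$. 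In that approach the low frequencies are exactly where the $\|\varphi\|_0$ term on the right-hand side is needed.
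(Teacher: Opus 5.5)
Your argument is correct, but the paper has no proof of this lemma for it to match. The paper imports \eqref{11190928} from Lemma A.12 of the cited work and uses it only in Lemma~\ref{lem:11292030es}. There it is applied in the squared form $\|u\|_{2+i}^2+\|\nabla p\|_{i}^2\lesssim\|u\|_{0}^2+\|G\|_{i}^2$, which confirms your reading of $\|\varphi\|_0^2$ as a typo for $\|\varphi\|_0$.

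Your route, by contrast, is a self-contained horizontal localization:
translates of one model problem on a smooth domain contained in $\Omega$, with top and bottom kept flat over $\mathrm{supp}\,\zeta_m$; Cattabriga/Agmon--Douglis--Nirenberg estimates there; the Ne\v{c}as inequality to break the circularity of the local pressure at $k=0$; Poincar\'e--Wirtinger for $k\geqslant1$; finite-overlap summation; and finally \eqref{interpolation}. This gives constants that are uniform by horizontal translation invariance. It also shows that $\|\varphi\|_0$ is what remains of a lower-order term that cannot be removed. The citation trades all of this for brevity.

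That term is genuinely necessary, as your Fourier remark anticipates. Take real $b$ with $\hat b$ supported in $\{|\xi|\leqslant\epsilon\}$ and $\|\nabla_{\mathrm{h}}b\|_{L^2(\mathbb{R}^2)}=1$. The pair $\varphi=(x_3(1-x_3)\nabla_{\mathrm{h}}b,0)$, $\psi=-2\mu b$ solves the problem with $f^2=(-\mu x_3(1-x_3)\nabla_{\mathrm{h}}\Delta_{\mathrm{h}}b,0)$ and $f^3=x_3(1-x_3)\Delta_{\mathrm{h}}b$. Hence $\|f^2\|_k\lesssim\epsilon^2$ and $\|f^3\|_{k+1}\lesssim\epsilon$, whereas $\|\varphi\|_2+\|\nabla\psi\|_0\gtrsim1$.

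The step you flag as the main obstacle does not actually arise. You never need to reassemble $\nabla\psi$ from the localized pressures, so the constants $\psi_m$ on neighbouring cubes never have to be compared. Sum only $\|\zeta_m\varphi\|_{k+2}^2$; each error term $(\psi-\psi_m)\nabla\zeta_m$ is estimated on $Q_m$ alone. Then read off $\|\nabla\psi\|_k\leqslant\|f^2\|_k+\mu\|\varphi\|_{k+2}$ from the equation. The same identity one level down, $\|\nabla\psi\|_{k-1}\leqslant\|f^2\|_{k-1}+\mu\|\varphi\|_{k+1}$, controls the Poincar\'e--Wirtinger pressure term for $k\geqslant1$ by a lower-order quantity, so the induction on $k$ is also unnecessary.
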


\renewcommand\refname{References}
\renewenvironment{thebibliography}[1]{%
\section*{\refname}
\list{{\arabic{enumi}}}{\def\makelabel##1{\hss{##1}}\topsep=0mm
\parsep=0mm
\partopsep=0mm\itemsep=0mm
\labelsep=1ex\itemindent=0mm
\settowidth\labelwidth{\small[#1]}%
\leftmargin\labelwidth \advance\leftmargin\labelsep
\advance\leftmargin -\itemindent
\usecounter{enumi}}\small
\def\newblock{\ }
\sloppy\clubpenalty4000\widowpenalty4000
\sfcode`\.=1000\relax}{\endlist}
\bibliographystyle{model1b-num-names}
%\bibliography{refs}

\end{document}